\newtheorem{proposition}{Proposition}
\newcounter{subsubsubsection}[subsubsection]
\title{Event Constrained Programming}
\author{Daniel Ovalle$^1$, Stefan Mazzadi$^2$, Carl D. Laird$^1$, Ignacio E. Grossmann$^1$, \\ and Joshua L. Pulsipher$^{2}$\thanks{Corresponding Author: pulsipher@uwaterloo.edu}\\
    {\small $^1$Department of Chemical Engineering}\\
    {\small \;Carnegie Mellon University, Doherty Hall, 5000 Forbes Ave, Pittsburgh, PA 15213, USA}\\
    {\small $^2$Department of Chemical Engineering}\\
    {\small \;University of Waterloo, 200 University Ave W, Waterloo, ON N2L 3G1, Canada}
    }
\date{}
\begin{document}

\maketitle

\begin{abstract}
In this paper, we present event constraints as a new modeling paradigm that generalizes joint chance constraints from stochastic optimization to (1) enforce a constraint on the probability of satisfying a set of constraints aggregated via application-specific logic (constituting an event) and (2) to be applied to general infinite-dimensional optimization (InfiniteOpt) problems (i.e., time, space, and/or uncertainty domains). This new constraint class offers significant modeling flexibility in posing InfiniteOpt constraints that are enforced over a certain portion of their domain (e.g., to a certain probability level), but can be challenging to reformulate/solve due to difficulties in representing arbitrary logical conditions and specifying a probabilistic measure on a collection of constraints. To address these challenges, we derive a generalized disjunctive programming (GDP) representation of event constrained optimization problems, which readily enables us to pose logical event conditions in a standard form and allows us to draw from a suite of GDP solution strategies that leverage the special structure of this problem class. We also extend several approximation techniques from the chance constraint literature to provide a means to reformulate certain event constraints without the use of binary variables. We illustrate these findings with case studies in stochastic optimal power flow, dynamic disease control, and optimal 2D diffusion.
\end{abstract}

\noindent\textbf{Keywords:} infinite-dimensional optimization, event constraints, chance constraints, generalized disjunctive programming, stochastic programming, dynamic programming

\section{Introduction}\label{sec:intro}
Infinite-dimensional optimization (InfiniteOpt) problems entail decision variables and constraints that are defined over continuous domains (e.g., time, space, and/or uncertainty); in other words, decision variables in these problems are functions/manifolds. Stochastic, dynamic, and partial differential equation (PDE) constrained optimization are prevalent decision-making paradigms that all can be classified as InfiniteOpt problems. These problems often employ complex modeling objects such as differential equations, integrals, and risk measures which make them challenging to formulate and solve. Illustrative engineering applications include the design and operation of process systems \cite{yuan2012state, nikolopoulou2012optimal, chen2017recent}, stochastic optimal power flow \cite{capitanescu2016critical, schmidli2016stochastic}, model predictive control \cite{rawlings2017model, paulson2020stochastic}, model identification for dynamic systems (e.g., microbial communities) \cite{shin2019scalable, miller1983sensitivity}, autonomous vehicle routing \cite{zafar2018methodology, roberge2012comparison}, and structural design \cite{wang2018structural}. 

Stochastic optimization (SO) is a particular InfiniteOpt modeling approach for decision-making under uncertainty where random phenomena are characterized via random parameters defined by a probability density function (pdf) \cite{birge2011introduction}. These random parameters index recourse variables (i.e., second-stage variables) making them infinite-dimensional if the pdf is continuous (e.g., Gaussian). Constraints that subsequently incorporate recourse variables and/or random parameters can be enforced \emph{almost surely}, meaning that they are held for every possible realization of the random parameters. This condition can readily become overly burdensome or impossible to enforce in practice. For instance, it may impractical to design a power grid that operates within certain limits for every possible extreme weather event (e.g., a category 5 hurricane). 

\emph{Chance constraints} overcome this limitation by enforcing that a constraint is held to a certain prescribed probability level (i.e., it is enforced over a certain portion of the possible random scenarios) \cite{charnes1959chance}. Similarly, joint chance constraints enforce such a condition on a set of constraints \cite{miller1965chance}. This provides a powerful modeling object for SO that has been applied to a wide variety of problem classes in the literature which include optimal power flow \cite{baker2017efficient}, model predictive control \cite{paulson2020stochastic}, scheduling \cite{liu2020cvar}, process design/intensification \cite{wendt2002nonlinear}, flexibility/reliability analysis \cite{pulsipher2019scalable, pulsipher2020measuring}, and portfolio planning \cite{pagnoncelli2009sample}. However, one key limitation of classical joint chance constraints is their implicit use of simple intersection ({\footnotesize AND}) logic to aggregate the set of constraints. This exacts that all the constraints must be enforced jointly for a particular random scenario. Such a condition may be overly restrictive in a variety of problems where application-specific logic can be incorporated to enforce a less strict condition on the constraints. For instance, it might be sufficient to satisfy only a certain subset of customer demands in a distribution system.

In other InfiniteOpt problem classes (e.g., dynamic and PDE-constrained), there are applications where enforcing a constraint strictly over the indexing domain (e.g., time and/or space) can be overly burdensome. For instance, in optimal control problems, allowing a path constraint to be violated over a small portion of the time horizon may lead to more favorable (and more often feasible) optimal control policies \cite{zhang1994stability}. Such a relaxation is often achieved in the literature using so-called soft constraints. These are typically implemented either with an exact penalty function which enforces a constraint via Lagrangian relaxation (i.e., penalizing constraint violation in the objective with an associated penalty weight) or by introducing slack variables which are penalized in the objective with a penalty weight \cite{kerrigan2000soft}. While it is straightforward to formulate these approaches, in practice, it is often difficult to choose penalty weights such that the constraints are relaxed to a desired extent (e.g., a particular fraction of the time horizon).

In \cite{pulsipher2022unifying}, Pulsipher and colleagues propose a unifying modeling abstraction for InfiniteOpt problems which provides a rigorous characterization across historically distinct modeling paradigms and is implemented in the Julia package \texttt{InfiniteOpt.jl}. This unified perspective has led to several new modeling approaches such as time-valued analogs of risk measures from SO \cite{pulsipher2022new}, the incorporation of random field theory into InfiniteOpt problems to capture uncertainty over space-time \cite{pulsipher2022random}, and a continuous-time approach for parameter estimation in dynamic systems \cite{pulsipher2022unifying}. To address the aforementioned shortcomings of chance constraints and soft constraints, the authors of \cite{pulsipher2022unifying} also use this abstraction to introduce the notion of \emph{event constraints} which generalize chance constraints from SO to:
\begin{itemize}
    \item use arbitrary logic in aggregating a set of constraints (encoding an \emph{event} we wish to constrain to a certain fractional threshold) and
    \item be applied to general InfiniteOpt problem domains (e.g., enabling time-valued analogs of chance constraints).
\end{itemize}
Here, event constraints provide the modeling flexibility to use logical operators (e.g., {\footnotesize AND} and {\footnotesize OR}) in accordance with application-specific logic to aggregate a set of constraints. Classical joint chance constraints correspond to the most restrictive special case of exclusively using {\footnotesize AND} operators, encoding the intersection of satisfying all constraints for a particular random scenario. Moreover, event constraints enable us to directly specify the fraction of the domain on which we wish to enforce a constraint in dynamic and PDE-constrained optimization problems. 

Event constraints are complex modeling objects that can be challenging to formulate and solve. In the special case of joint chance constraints, a variety of reformulation/solution techniques have been proposed in the literature. Due to the difficulty in determining the joint probability density function needed for exact analytical reformulations, big-M constraint representations in conjunction with sample average approximation (SAA) are often used to reformulate chance constraints via an indicator function \cite{pagnoncelli2009sample}. This SAA approach is simple to implement, but can become intractable for certain complex systems with a large number of samples. To alleviate this limitation, several iterative cutting-plane solution strategies have been proposed such as branch-and-cut decomposition \cite{luedtke2014branch} and combinatorial Benders' cuts \cite{codato2006combinatorial}. Moreover, a few extensions of classical disjunctive programming have been made to solve SAA representations of joint chance constraints \cite{vielma2012mixed}. Alternative solution techniques include data-driven kernel smoothing, which seeks to estimate the density function of joint chance constraints \cite{calfa2015data}, and differentiable SAA, which produces a representation that approximates the quantile function \cite{pena2020solving}. However, connections to generalized disjunctive programming (discussed further below) have not yet been explored to the best of our knowledge.

Moreover, many reformulation methods have been proposed in the literature for individual chance constraints. Exact reformulations are possible under a limited number of forms such as chance constraints without any infinite (i.e., recourse) variables and a linear dependence on a Gaussian random parameter \cite{charnes1963deterministic}. Like joint chance constraints, SAA-based approaches that use binary variables to model an indicator function representation of the chance constraint are popular, but these binary variables can make certain problems very difficult to solve (e.g., nonconvex problems) \cite{cao2020sigmoidal}. Hence, several reformulations have been proposed which use a convex function to overestimate the indicator function and thus provide a conservative approximation of the chance constraint \cite{nemirovski2007convex}. These convex conservative approximations include conditional-value-at-risk (CVaR) approximation (equivalent to Markov bound), Bernstein approximation, Chernoff bound approximation, and Chebyshev bound approximation \cite{nemirovski2012safe, pinter1989deterministic}. Here, CVaR approximation provides the least conservative approximation; however, all of these approaches are typically quite conservative in practice \cite{nemirovski2007convex}. The difference of convex functions approximation presented in \cite{shan2014smoothing} improves the tightness of the CVaR approximation, but its use of difference of max functions make it incompatible with most nonlinear programming (NLP) routines \cite{cao2020sigmoidal}. To alleviate this, the authors in \cite{geletu2015tractable} propose smooth sigmoidal approximation which provides a much tighter fit and is amenable for nonconvex problems. In \cite{cao2020sigmoidal}, the authors propose an improved sigmoidal approximation that provides a systematic approach for selecting the sigmoidal hyper-parameters using CVaR approximation. 

Mathematical programming with complementarity constraints (MPCC) is a framework  used within the NLP community as a way to represent nonsmooth (e.g., binary) decisions \cite{biegler2010nonlinear}. The main idea is to relax the integrity of the binary variables in the original problem and solve a formulation with complementarity constraints, which enforce the relaxed variables to converge to 0-1 values.
This approach could potentially be used to approximate the big-M formulation presented in \cite{pagnoncelli2009sample}, using relaxed variables to represent the indicator function.
Although using complementarity constraints alleviates the computational burden stemming from mixed-integer optimization, they introduce nonlinearity and nonconvexity to the problem \cite{wang2023mpcc}. 
Furthermore, this method often involves solving multiple NLP problems in a sequential manner, as the monolithic formulation might be challenging to solve directly as its solution does not satisfy constraint qualifications \cite{wang2023m}.

General event constraints have been solved using SAA and big-M constraints to approximate the indicator function stochastic optimal power flow, where the event logic is encoded via manually derived auxiliary constraints with additional binary variables \cite{pulsipher2022unifying}. However, as the complexity in the event logic increases, deriving valid auxiliary constraints that encode this logic is non-trivial and error prone, if done manually. Moreover, this simple big-M approach is prone to the same computational limitations observed with joint chance constraints. 

\begin{figure}[!htb]
	\includegraphics[width=0.7\textwidth]{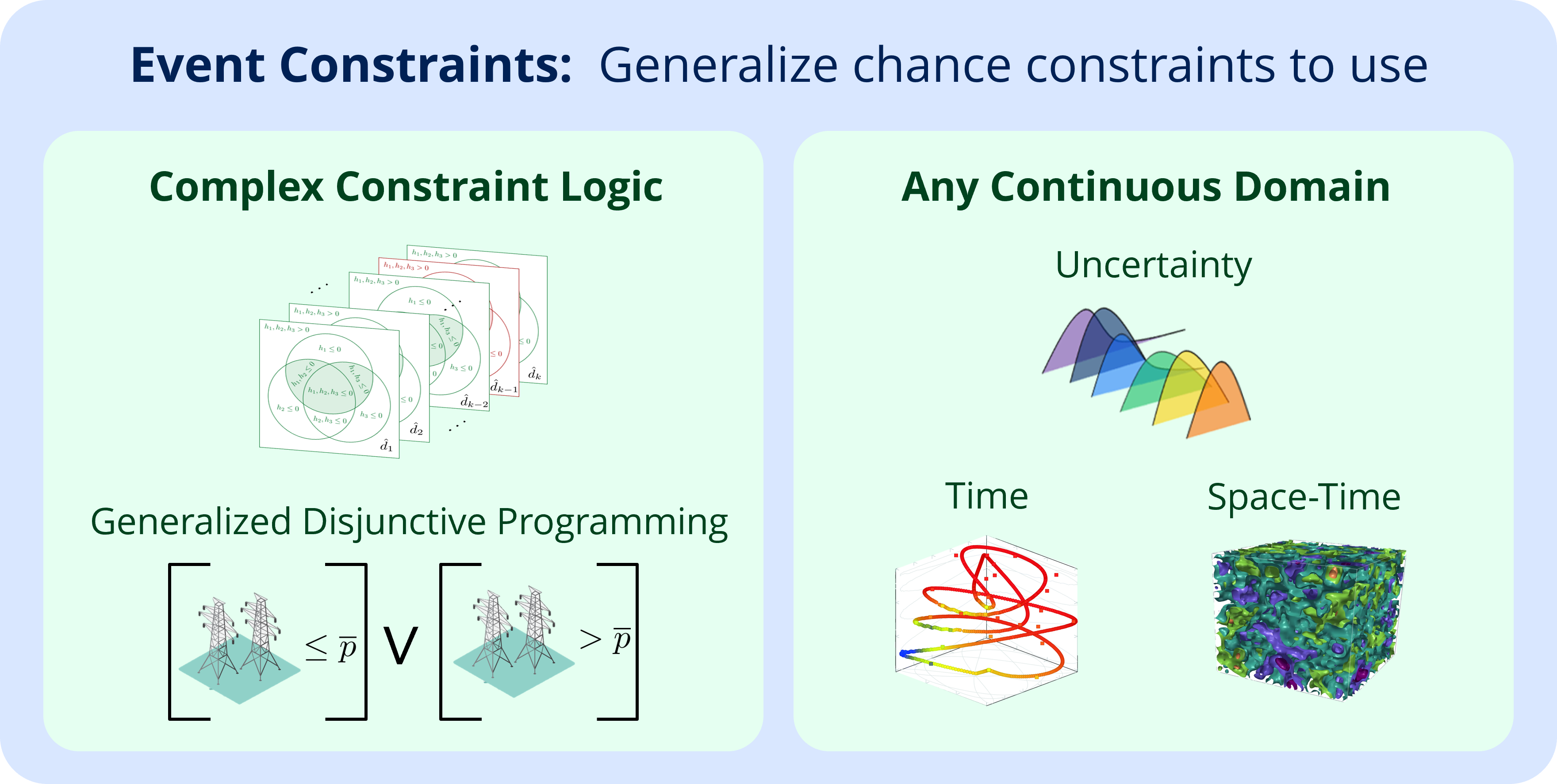}
	\centering
	\caption{A visual summary of how event constrained programming builds upon the theoretical foundation provided by chance constraints and how GDP can be used to model complex logic.}
	\label{fig:abstract}
\end{figure}

In this work, we rigorously formalize event constrained InfiniteOpt problems and demonstrate how they provide useful alternatives to traditional chance constraints and soft constraints as summarized in Figure \ref{fig:abstract}. Moreover, we propose a collection of reformulation techniques that address the shortcomings mentioned above. This includes a generalized disjunctive programming (GDP) representation of event constraints. Generalized disjunctive programming adds several extensions to the disjunctive programming first proposed in \cite{balas1985disjunctive}, and it enables a systematic approach to model systems with complex logic via disjunction constraints and logical propositions \cite{grossmann2021advanced}. Moreover, a variety of reformulation/solution techniques have been developed to effectively solve GDP problems by leveraging their special structure \cite{turkay1996logic, lee2000new, trespalacios2015improved, kronqvist2022p}. There are also open-source software tools such as \texttt{DisjunctiveProgramming.jl} and \texttt{Pyomo.GDP} that make this problem class easy to implement \cite{perez2023julia, chen2021pyomo}. All these characteristics make GDP an attractive modeling abstraction for implementing event constraints and provides a straightforward paradigm for modeling arbitrary event logic using the propositional logic GDP provides. Additionally, we also extend several continuous approximation techniques from the chance constraint literature such as CVaR and sigmoidal approximations to tackle event constraints for general InfiniteOpt problems. Finally, we investigate the use of MPCC formulations to obtain tractable continuous formulations. The contributions of this work include:
\begin{itemize}
    \item the formal mathematical definition of event constraints,
    \item the expression of event constraints via GDP,
    \item the application of GDP solution strategies to event/joint chance constrained problems,
    \item the use of constraint programming operators in characterizing events,
    \item the extension of chance constraint approximation techniques for use on event constraints,
    \item the use of MPCC to solve big-M constraints arising from event constrained problems,
    \item and the demonstration of the above contributions on diverse InfiniteOpt case studies.
\end{itemize}

The remainder of this paper is structured as follows. Section \ref{sec:background} establishes necessary notation and background for chance constraints, InfiniteOpt problems, and GDP. Section \ref{sec:event_constrs} formalizes the definition and treatment of event constraints. Section \ref{sec:formulations} presents and discusses the proposed reformulation/solution techniques. Section \ref{sec:cases} illustrates our findings with case studies pertaining to stochastic, dynamic, and PDE-constrained optimization. Finally, Section \ref{sec:conclusion} highlights key findings and outlines worthwhile future research directions.

\section{Basic Notation and Background}\label{sec:background}
In this section, we establish basic notation for traditional chance constraints and discuss some of reformulation approaches in the literature that are relevant for this work. We also review the unifying abstraction for InfiniteOpt problems presented in \cite{pulsipher2022unifying}. Moreover, for the unfamiliar reader, we provide an overview of generalized disjunctive programming and mathematical programming with complementarity constraints. Since this is not intended to be a complete review, we invite the interested reader to refer to \cite{birge2011introduction, pulsipher2022unifying, grossmann2021advanced, biegler2010nonlinear} for more thorough discussion.

\subsection{Chance Constraints}\label{sec:chance_constrs}
In SO, we typically consider an uncertain parameter $\xi \in \mathcal{D}_\xi \subseteq \mathbb{R}^{n_\xi}$ that is described by a distribution (e.g., $\xi \sim \mathcal{N}(\mu, \Sigma)$). This indexes second-stage (i.e., recourse) variables $q(\xi) \in \mathcal{Q}$ which are considered in conjunction with first-stage (e.g., design) variables $z \in \mathcal{Z} \subseteq{R}^{n_z}$. Here, $\mathcal{Q}$ encapsulates a set of feasible function choices for $q : \mathcal{D}_\xi \mapsto \mathbb{R}^{n_q}$. The chance constrained formulation considers minimizing the measured cost $R_\xi(f(z, q(\xi), \xi))$ (where $R_\xi$ is a risk measure such as $\mathbb{E}_\xi$) relative to constraints $g(z, q(\xi), \xi) \leq 0$ which are enforced almost surely and a (joint) chance constraint that acts on constraints $h(z, q(\xi), \xi) \leq 0$:
\begin{equation}
    \begin{aligned}
        &&\min_{z \in \mathcal{Z}, q(\xi) \in \mathcal{Q}} &&& R_\xi\big(f(z, q(\xi), \xi)\big) \\
        && \text{s.t.} &&& g(z, q(\xi), \xi) \leq 0, \ \ \xi \in \mathcal{D}_\xi \\
        &&&&& \mathbb{P}_\xi\big(h(z, q(\xi), \xi) \leq 0\big) \geq \alpha
    \end{aligned}
    \label{eq:chance_constr_form}
\end{equation}
where $\alpha \in (0, 1]$ is the desired probability  \cite{birge2011introduction}. Moreover, we observe that the chance constraint in \eqref{eq:chance_constr_form} implicitly uses the {\footnotesize AND} operator to aggregate the constraints $h(z, q(\xi), \xi) \leq 0$:
\begin{equation}
    \mathbb{P}_\xi\left(\underset{i \in \mathcal{I}}{\bigwedge}h_i(z, q(\xi), \xi) \leq 0\right) \geq \alpha
    \label{eq:joint_chance_constr}
\end{equation}
although this is typically omitted in the literature. Moreover, \eqref{eq:joint_chance_constr} is called an individual chance constraint when $|\mathcal{I}| = 1$. We also observe that \eqref{eq:joint_chance_constr} enforces the constraints $h(\xi) \leq 0$ almost surely when we set $\alpha = 1$:
\begin{equation}
    \mathbb{P}_\xi\left(h(\xi) \leq 0\right) \geq 1 \iff h(\xi) \leq 0, d \in \mathcal{D}_\xi
\end{equation}
where we let $h(\xi) := h(z, q(\xi), \xi)$ for compactness in notation. Hence, the constraints $g(\xi) \leq 0$ can be interpreted as a special case of a joint chance constraint. 

\subsubsection{Sample Average Approximation with big-M Constraints}
To go about reformulating \eqref{eq:joint_chance_constr} into a form that is compatible with conventional optimization solvers, we first can equivalently represent it using an indicator function $\mathbbm{1}_{h(\xi) \leq 0} : (\mathcal{Z} \times \mathcal{Q} \times \mathcal{D}_\xi) \mapsto \{0, 1\}$:
\begin{equation}
    \mathbb{E}_\xi\left[\mathbbm{1}_{h(\xi) \leq 0}(\xi)\right] \geq \alpha
    \label{eq:indicator_chance_constr}
\end{equation}
where again we let $\mathbbm{1}_{h(\xi) \leq 0}(\xi) := \mathbbm{1}_{h(z, q(\xi), \xi) \leq 0}(z, q(\xi), \xi)$ for convenience in notation. The indicator function is non-differentiable and often is reformulated via the use of a binary variable $y : \mathcal{D}_\xi \mapsto \{0, 1\}$ in combination with big-M constraints:
\begin{equation}
    \begin{aligned}
        &h_i(z, q(\xi), \xi) \leq (1 - y(\xi)) M_i, && i \in \mathcal{I}, \xi \in \mathcal{D}_\xi \\
        &\mathbb{E}_\xi[y(\xi)] \geq \alpha
    \end{aligned}
    \label{eq:binary_chance_constr}
\end{equation}
where $M_i \in \mathbb{R}_+$ is a sufficiently large upper bounding constant. This is often transformed into a finite-dimensional optimization problem via sample average approximation with Monte Carlo (MC) samples $\{\hat{\xi}_k : k \in \mathcal{K\}}$ \cite{pagnoncelli2009sample}:
\begin{equation}
    \begin{aligned}
        &h_i(z, q_k, \hat{\xi}_k) \leq (1 - y_k) M_i, && i \in \mathcal{I}, k \in \mathcal{K} \\
        &\frac{1}{|\mathcal{K}|}\sum_{k \in \mathcal{K}}y_k \geq \alpha.
    \end{aligned}
    \label{eq:SAA_binary_chance_constr}
\end{equation}
This approach is straightforward to implement and preserves the form of the underlying constraints (e.g., if $h(\xi)$ is linear then the reformulation is also linear). However, the introduction of binary variables can diminish the scalability of this solution approach, especially for nonlinear formulations. Hence, a variety of alternative strategies have been developed in literature to address these shortcomings as outlined in Section \ref{sec:intro}. We discuss some of these methods in the subsections below.

\subsubsection{Individual Chance Constraint Reformulations} \label{sec:individual_chance}
We can directly reformulate an individual chance constraint analytically if $h(\xi)$ has no dependence on $q(\xi)$ and has an algebraic form and a distribution that allows us to compute the inverse cumulative density function \cite{charnes1963deterministic}. For instance, if $h(\xi) = \xi^Tz - b$ and $\xi \in \mathbb{R}^{n_z} \sim \mathcal{N}(\mu, \Sigma)$ then we know that $h(\xi) \sim \mathcal{N}(\mu^Tz - b, z^T\Sigma z)$ which gives:
\begin{equation}
    \mathbb{P}_\xi(h(\xi) \leq 0) = \Phi\left(\frac{b - \mu^Tz}{\sqrt{z^T \Sigma z}}\right)
    \label{eq:linear_cdf}
\end{equation}
where $\Phi(\cdot)$ is the cumulative distribution function (cdf) of a standard Gaussian distribution. With \eqref{eq:linear_cdf}, we can compute an analytic form of a chance constraint by inverting the cdf:
\begin{equation}
    \mathbb{P}_\xi(h(\xi) \leq 0) \geq \alpha \iff b - \mu^Tz \geq \Phi^{-1}(\alpha)\left\|\Sigma^{\frac{1}{2}}z\right\|_2
\end{equation}
which can be reformulated as a second-order cone if $\alpha \geq 0.5$. Hence, under a limited set of circumstances we can obtain an analytic formulation, but linearity and convexity is typically not preserved.

Other alternative reformulation techniques for individual chance constraints include those that utilize a conservative continuous approximation of the indicator function used in \eqref{eq:indicator_chance_constr}. Here, convex approximations are particularly popular since they maintain the convexity of $h(\xi) \leq 0$ (assuming it is convex) \cite{nemirovski2007convex}. For a given nonnegative convex nondecreasing function $\phi : \mathbb{R} \mapsto \mathbb{R}_+$ that satisfies $\phi(0) = 1$ we obtain the bound:
\begin{equation}
    \phi\left(\lambda^{-1}\tau\right) \geq \mathbbm{1}_{\tau > 0}(\tau)
    \label{eq:conservative_indicator}
\end{equation}
for $\lambda > 0$ and $\tau \in \mathbb{R}$. Hence, it follows that:
\begin{equation}
    \mathbb{E}_\xi\left[\phi\left(\lambda^{-1}h(\xi)\right)\right] \geq \mathbb{P}_\xi(h(\xi)>0)
\end{equation}
which leads to the conservative approximation of \eqref{eq:indicator_chance_constr}:
\begin{equation}
    \mathbb{E}_\xi\left[\phi\left(\lambda^{-1}h(\xi)\right)\right] \leq 1 - \alpha
    \label{eq:conservative_chance_constr}
\end{equation}
where the relative tightness of the approximation can be adjusted by choice of $\phi(\cdot)$ and $\lambda$. In the literature, \eqref{eq:conservative_chance_constr} is typically rewritten as 
\begin{equation}
    \underset{\lambda > 0}{\inf}\left\{\lambda \mathbb{E}_\xi\left[\phi\left(\lambda^{-1}h(\xi)\right)\right] - \lambda (1-\alpha)\right\} \leq 0
    \label{eq:conservative_chance_constr2}
\end{equation}
which is convex in $\lambda$, $z$, and $q(\xi)$ if $h(\xi)$ is a convex function for a given $\xi$ \cite{nemirovski2007convex}. If we choose $\phi(\tau) = [1 + \tau]_+$ we obtain:
\begin{equation}
    \underset{\lambda > 0}{\inf}\left\{\mathbb{E}_\xi\left[[h(\xi) + \lambda]_+\right] - \lambda (1-\alpha)\right\} \leq 0
\end{equation}
where $[\tau]_+ := \max(0, \tau)$. By rearranging and setting $\lambda = - \lambda$ we obtain a constraint on the conditional-value-at-risk (CVaR):
\begin{equation}
    \text{CVaR}_\xi(h(\xi); \alpha) \leq 0
    \label{eq:cvar_chance_constr}
\end{equation}
which is the least conservative convex approximation of an individual chance constraint \cite{nemirovski2012safe}. Moreover, the SAA representation of \eqref{eq:cvar_chance_constr} does not require the use of binary variables which gives it a clear computational advantage over \eqref{eq:binary_chance_constr}. However, the CVaR approximation is often quite conservative in practice. Other popular choices for $\phi(\cdot)$ include the Bernstein approximation ($\phi(\tau) = \exp^\tau$) and the Chebyshev approximation ($\phi(\tau) = [\tau + 1]^2_+$).

Finally, we review the SigVaR approximation for individual chance constraints proposed in \cite{cao2020sigmoidal}. Here, we use a modified sigmoidal function that outer approximates the indicator function:
\begin{equation}
    \phi_\text{sig}(\tau) := \left[2\frac{1 + \beta}{\beta + \exp(-\gamma \tau)}-1\right]_+
\end{equation}
where $\beta,\gamma \in \mathbb{R}_+$ are hyper-parameters. With this, we can approximate \eqref{eq:indicator_chance_constr}:
\begin{equation}
    \mathbb{E}_\xi\left[\phi_\text{sig}(h(\xi))\right] \leq 1 - \alpha
    \label{eq:sig_chance_constr}
\end{equation}
which is a conservative approximation for any $\beta, \gamma \in \mathbb{R}_+$ that becomes exact as $\beta \rightarrow \infty$ if we choose $\gamma(\beta) := (1 + \beta)\theta$ with $\theta > 0$. The authors of \cite{cao2020sigmoidal} define the sigmoidal value-at-risk (SigVaR):
\begin{equation}
    \text{SigVaR}_\xi(T; \alpha, \beta, \gamma) := \underset{\lambda \in \mathbb{R}}{\inf}\left\{\mathbb{E}_\xi\left[\phi_\text{sig}(T -\lambda)\right] \leq 1 - \alpha\right\}
    \label{eq:sigvar}
\end{equation}
which can be used to equivalently reformulate \eqref{eq:sig_chance_constr} as:
\begin{equation}
    \text{SigVaR}_\xi(h(\xi); \alpha, \beta, \gamma) \leq 0.
\end{equation}
Using this representation, the authors are able to show that a feasible choice of the hyper-parameters $\beta, \gamma$ can be made based on the optimal value $\lambda_\text{CVaR}^*$ stemming from the CVaR approximated solution of Problem \eqref{eq:chance_constr_form}. In particular, we select $\beta \geq \bar{\beta}$ where $\bar{\beta}$ is the positive solution to $\bar{\beta} - \log(2 + \bar{\beta}) = 1$ and then set $\gamma = \frac{-\beta-1}{2\lambda_\text{CVaR}^*}$ (i.e., $\theta = -(2\lambda_\text{CVaR}^*)^{-1}$) to provide a SigVaR approximation that is at least as accurate as the Bernstein approximation \cite{cao2020sigmoidal}. Moreover, we approach the exact solution of Problem \eqref{eq:chance_constr_form} by sequentially solving the SigVaR approximated version of Problem \eqref{eq:chance_constr_form} with increasing values of $\beta$ while updating $\gamma = \frac{-\beta-1}{2\lambda_\text{CVaR}^*}$. In practice, this sequential approach often leads to solutions that are significantly less conservative then the those corresponding to the CVaR approximation \cite{cao2020sigmoidal}.

\subsubsection{Converting Joint Chance Constraints to Individual Chance Constraints} \label{sec:joint2individ}
We can apply the above individual chance constraint approximations outlined in Section \ref{sec:individual_chance} to joint chance constraints that are first converted into individual chance constraints. One common approach is to conservatively approximate the joint chance constraint in \eqref{eq:joint_chance_constr} as a collection of $|\mathcal{I}|$ individual chance constraints:
\begin{equation}
    \mathbb{P}_\xi(h_i(\xi) > 0) \leq 1 - \alpha_i
    \label{eq:bonferroni_approximation}
\end{equation}
where $\sum_{i \in \mathcal{I}} (1 - \alpha_i) \leq (1 - \alpha)$ and $\alpha_i \in (0, 1]$ which follows from the Bonferroni inequality. Typically, $\alpha_i = (1-\alpha)/|\mathcal{I}|$ is selected since treating $\alpha_i$ as decision variables typically leads to intractable formulations; moreover, \eqref{eq:bonferroni_approximation} tends to be quite conservative in practice even with optimal choices of $\alpha_i$ \cite{nemirovski2007convex}. 

Another popular approach is to enforce an individual chance constraint on the pointwise maximum of the constraint functions $h(\xi)$:
\begin{equation}
    \mathbb{P}_\xi\left(\max_{i \in \mathcal{I}}h_i(\xi) \leq 0\right) \geq \alpha
    \label{eq:max_joint_chance}
\end{equation}
which unlike \eqref{eq:bonferroni_approximation} is equivalent to \eqref{eq:joint_chance_constr}. For certain approximation techniques, the additional maximum function can lead to intractable formulations; however, for most SAA driven approaches (e.g., SAA approximations of CVaR and SigVaR) this leads to tractable formulations \cite{geng2019data}. 

\subsection{Unifying Abstraction for InfiniteOpt Problems}\label{sec:abstraction}
In \cite{pulsipher2022unifying}, the authors present a unifying modeling abstraction for InfiniteOpt problems that is based on a collection of general modeling objects. These are implemented along with a library of transformation approaches in the Julia package \texttt{InfiniteOpt.jl} which extends \texttt{JuMP.jl} (a popular algebraic modeling language for mathematical programming) to model problems in their natural infinite-dimensional form. This modeling approach has enabled the authors to transfer theory across disciplines and discover several new modeling approaches which are described in \cite{pulsipher2022unifying}, \cite{pulsipher2022new}, and \cite{pulsipher2022random}. The core modeling objects to this unifying abstraction are summarized in Figure \ref{fig:abstraction} and outlined in the discussion below.

\begin{figure}[!htb]
	\includegraphics[width=\textwidth]{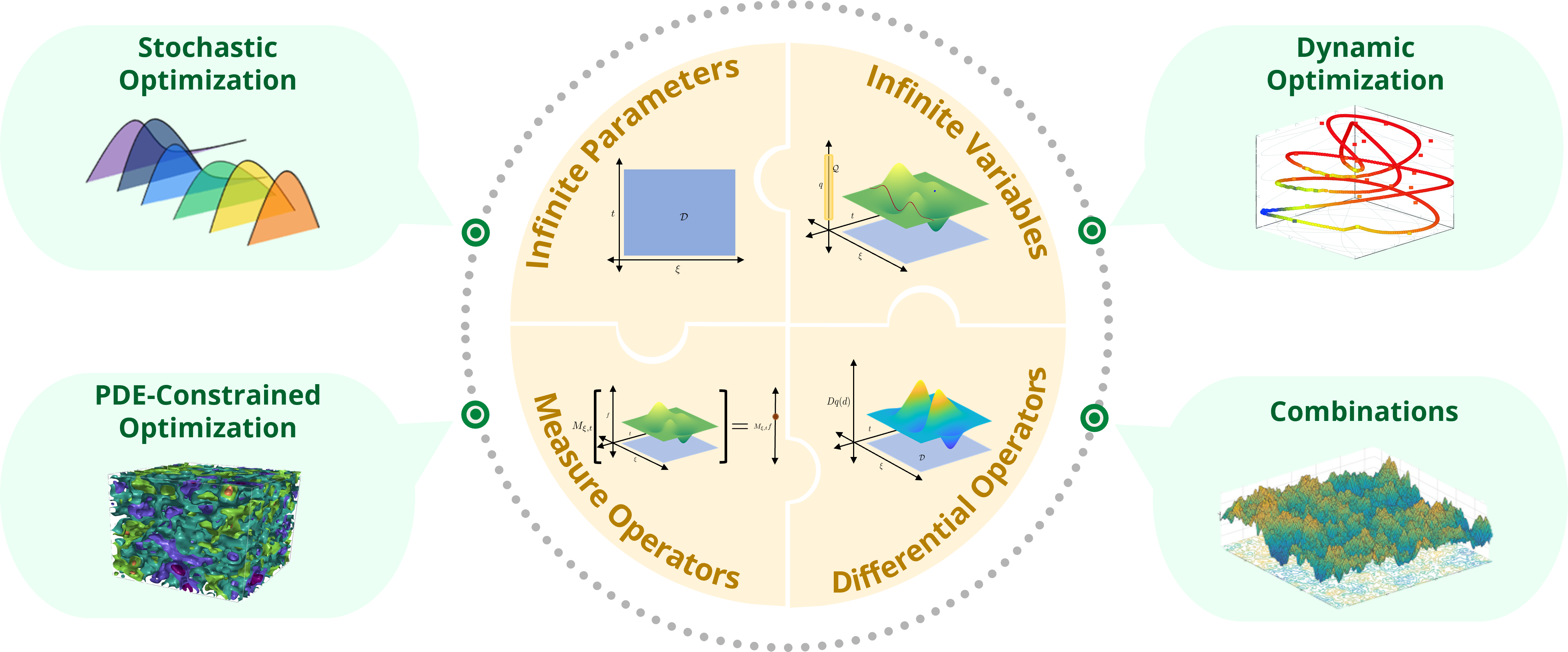}
	\centering
	\caption{A depiction of how using infinite parameter, infinite variable, measure operator, and differential operator modeling objects allows us to capture formulations in stochastic, dynamic, and PDE-constrained optimization and combinations.}
	\label{fig:abstraction}
\end{figure}

First, infinite parameters $d \in \mathcal{D} \subseteq \mathbb{R}^{n_d}$ index the decision domain $\mathcal{D}$ (also called the infinite domain) of the InfiniteOpt problem. These act as the independent variables to the decision variables and are what parameterize the feasible region. The infinite parameters can be defined over a wide variety of problem domains. For instance, we can incorporate time $t \in [t_0, t_f] \subset \mathbb{R}$, 3D space $x \in \mathcal{S} \subset \mathbb{R}^3$, an uncertain parameter $\xi \in \mathcal{D}_\xi \subseteq \mathbb{R}^{n_\xi}$, and combinations of these.

Decision variables include infinite variables $q : \mathcal{D} \mapsto \mathcal{Q} \subseteq \mathbb{R}^{n_q}$ which are functions that encode the choices we can make over the domain $\mathcal{D}$. These can be alternatively interpreted as a collection of $n_q$ finite variables indexed by an infinite parameter (giving an infinite collection of variables). Specific examples include uncertainty-dependent recourse $q(\xi)$ in stochastic optimization and time-dependent state/control trajectories $q(t)$ in dynamic optimization. We can also incorporate finite variables $z \in \mathbb{Z} \subseteq \mathbb{R}^{n_z}$ which capture decisions that are independent of the domain $\mathcal{D}$ (e.g., design variables and 1st stage variables). 

Next, differential operators $D : \mathscr{Q} \mapsto \mathscr{D}$ are applied to infinite variables to capture their rate of change with respect to $d$. Here, $\mathscr{Q}$ is a function space for $q(d)$ and $\mathscr{D}$ is the function space of the output differential functions. Example operators include temporal partial derivatives $\partial / \partial t$ and spatial Laplacian operators $\nabla_x$.

Finally, measure operators $M_d : \mathscr{Q} \mapsto \mathbb{R}$ summarize infinite variables by a scalar value. A more general output function space $\mathscr{M}$ is considered in \cite{pulsipher2022infinite} to account for summarizing over a subset of infinite parameters, but to reduce complexity, we restrict this to $\mathbb{R}$ which is sufficient for this work. Examples include time-valued integrals and risk measures from SO (e.g., CVaR and expectation).

With the above modeling components, we formulate a general InfiniteOpt problem:
\begin{equation}
    \begin{aligned}
        &&\min_{z \in \mathcal{Z}, q(d) \in \mathcal{Q}} &&& M_d f(z, Dq, q(d), d) \\
        && \text{s.t.} &&& g(z, Dq, q(d), d) \leq 0, \ \ d \in \mathcal{D} \\
        &&&&& M_d h(z, Dq, q(d), d) \geq 0
    \end{aligned}
    \label{eq:infiniteopt_form}
\end{equation}
which features two classes of constraints: algebraic constraints $g(d) \leq 0$ and measured constraints $M_d h(d) \geq 0$. This form is quite general and captures a large collection of problems in stochastic, dynamic, and PDE-constrained optimization. For instance, this readily captures Problem \eqref{eq:chance_constr_form} if we let $d = \xi \in \mathcal{D}_\xi$, remove $Dq$, let $M_d = R_\xi$ in the objective, and define 
\begin{equation}
    M_\xi h(\xi) := \mathbb{P}_\xi\left(\bigwedge_{i \in \mathcal{I}}h_i(\xi) \leq 0\right) - \alpha. 
    \label{eq:chance_measure}
\end{equation}
In Section \ref{sec:event_constrs}, we will see how this connection readily inspires the generalization of chance constraints for general domains $\mathcal{D}$. 

\subsection{Generalized Disjunctive Programming}\label{sec:gdp}

Generalized disjunctive programming (GDP) is a mathematical optimization framework that provides a systematic way to model and solve problems with logical constraints.
It was first introduced by \cite{raman1994modelling} as an extension of the disjunctive programming paradigm proposed by \cite{balas1985disjunctive}, which modeled a feasible region as a disjunction of convex sets. The general GDP formulation that facilitates the modeling of logical and disjunctive constraints is as follows \cite{grossmann2013systematic}:
\begin{equation}\label{eqn:gdp.general}
\begin{aligned}
&&\min &&& f(x) \\
&&\text{s.t.}  &&& g(x) \leq 0\\
&&&&&\ \bigvee_{i\in I_k} \left[
    \begin{gathered}
    Y_{ik} \\
    r_{ik}(x)\leq 0\\
    \end{gathered}
\right], && k \in K\\
&&&&& {\text{\footnotesize EXACTLY}}(1, Y_{k}),  && k \in K \\
&&&&& \Omega(Y) = \text{True} \\
&&&&& x \in X\\
&&&&& Y_{ik} \in \{\text{True}, \text{False}\}, && k \in K, i \in I_k\\
\end{aligned} 
\end{equation}
where $x \in \mathbb{R}^n$ are continuous variables, $f(\cdot)$ is the objective, $g(\cdot)$ are global constraint functions, $Y_{ik}$ is a Boolean variable that indicates whether to enforce the constraints $r_{ik}(x)\leq0$ which are contained in its corresponding disjunct, and $Y_k$ denotes the set of logical variables $\{Y_{ik}: i \in I_k\}$. Here, the disjunctions are indexed over $k$ and are each comprised of $|I_k|$ disjuncts. Every disjunct is related through an OR ($\vee$) and an exactly one operator ($\text{\footnotesize EXACTLY}(1,\cdot)$) is used to require that only one disjunct (i.e., collection of constraints) is enforced per disjunction \cite{perez2023modeling}. $\Omega(Y)$ is the set of logical prepositions composed of clauses relating the Boolean variables through the first order operators {\footnotesize AND} ($\wedge$), {\footnotesize OR} ($\vee$), {\footnotesize XOR} ($\underline{{\vee}}$), {\footnotesize NEGATION} ($\neg$), {\footnotesize IMPLICATION} ($\Rightarrow$) and {\footnotesize EQUIVALENCE} ($\Leftrightarrow$). Moreover, $\Omega$ can be composed by cardinality clauses defining rules over $m$ elements such as $\text{\footnotesize{EXACTLY}}(m,\cdot)$, $\text{\footnotesize{ATLEAST}}(m,\cdot)$, and $\text{\footnotesize{ATMOST}}(m,\cdot)$ \cite{yan1999tight}.

GDP provides a framework for directly and intuitively modeling logical relationships in optimization problems and to avoid manually posing logical considerations through mixed-integer constraints that are prone to modeling errors \cite{grossmann2013systematic,grossmann2002review}. Moreover, it enables the modeler to rapidly switch between solution methods. These methods include using transformations (e.g., big-M \cite{raman1994modelling}, hull \cite{lee2000new}, hybrid approaches \cite{sawaya2005cutting}) that convert GDP formulations into mixed-integer formulations that can be solved with traditional mixed-integer solvers. There are solution algorithms, such as logic-based outer approximation (LOA) \cite{turkay1996logic},  logic-based branch-and-bound (LBB) \cite{grossmann2002review}, and logic-based discrete-steepest descent (LD-SDA) \cite{bernal2022process, ovalle2024logic}, that directly operate on GDP formulations, leveraging their unique structure.

An important feature of GDP, that we exploit in this work, is its ability to systematically translate logical constraints $\Omega(Y) = \text{True}$ into algebraic constraints that use binary variables $y := \mathbbm{1}(Y)$ \cite{raman1991relation, hooker2002logic, grossmann2021advanced}. This is accomplished by systematically applied De Morgan's laws to transform $\Omega$ into its conjunctive normal form, the logical clauses of which can be readily converted into standard MIP constraints.
For more detail and examples on this systematic procedure we refer the readers to \cite{grossmann2021advanced}.

% \begin{algorithm}
% \caption{Transforming Arbitrary Logic Into CNF}
% \begin{algorithmic} 
% \State \textbf{Input:} Set of prepositional arbitrary logic ($\Omega$).
% \State \textbf{Output:} Respective set in its CNF ($\Omega_{CNF}$).
% \vspace{2mm}
% \State Split equivalences: $Y_1 \Leftrightarrow Y_2  \ \rightarrow \ [Y_1 \Rightarrow Y_2] \wedge [Y_2 \Rightarrow Y_1]$ 
% \State Remove implications: $Y_1 \Rightarrow Y_2  \ \rightarrow \ \neg Y_1 \vee Y_2$
% \State Move negation inwards by utilizing De Morgan's theorem: \Comment{Apply as appropriate}
%     \State $\ \ \ \ \ \ \ \cdot \ \neg [Y_1 \wedge Y_2]  \ \rightarrow \ \neg Y_1 \vee \neg Y_2 $
%     \State $\ \ \ \ \ \ \ \cdot \ \neg [Y_1 \vee Y_2]  \ \rightarrow \ \neg Y_1 \wedge \neg Y_2 $
% \While{not every clause in the form $\bigvee_{j\in J_\ell} Y_j$} 
%     \State Recursively distribute {\footnotesize AND}s over {\footnotesize OR}s: $[Y_1 \wedge Y_2] \vee Y_3 \ \rightarrow \ [Y_1 \vee Y_3] \wedge [Y_2 \vee Y_3]$
% \EndWhile
% \end{algorithmic}
% \label{alg:into_CNF}
% \end{algorithm}

GDP modeling objects (i.e., disjunctions and logical constraints) are readily supported by software tools as \texttt{pyomo.GDP} \cite{chen2021pyomo}, \texttt{DisjunctiveProgramming.jl} \cite{perez2023julia}, and \texttt{GAMS} which automate the transformation and solution techniques discussed above. In Section \ref{sec:gdp-representation}, we will see how GDP provides a natural framework to model event constraints with complex logic. 

\subsection{Mathematical Programming with Complementarity Constraints} \label{sec:mpcc}

Mathematical programming with complementarity constraints (MPCC) offers a strategy to potentially avoid introducing the binary variables traditionally used to solve chance constraints via big-M reformations. The canonical form of a complementarity constraint is:
\begin{equation}
    \begin{aligned}
        & 0 \leq y^0 \perp y^1 \geq 0 \\
        & y^0, y^1 \in \mathbb{R}_+^0
    \end{aligned}
    \label{eq:complementarity_constraints}
\end{equation}
where $\perp$ is the complementarity operator that enforces that at least one of the variables is at the bound \cite{biegler2010nonlinear}. The complementarity operator yields an inclusive OR, meaning both variables could be at the bound simultaneously. Therefore, exclusivity needs to be enforced with an additional summation constraint. By adding the exclusivity constraint together with an upper bound of $1$, a binary variable $y \in \{0,1\}$ can be expressed in terms of continuous variables as:
\begin{equation}
    \begin{gathered}
        0 \leq y^0 \perp y^1 \geq 0\\
        y^0 + y^1 = 1 \\
        0 \leq y^0, y^1 \leq 1
    \end{gathered}
    \label{eq:binary_complementarity_constraints}
\end{equation}
where $y^0$ and $y^1$ indicate if $y$ takes the value of $0$ or $1$, respectively.

Several alternative forms of \eqref{eq:binary_complementarity_constraints} have been proposed to make the constraints better posed for NLP solvers \cite{anitescu2007elastic, hu2004convergence, leyffer2006interior}. In this work, we choose the smooth-max approximation of complementarity constraints, since it has performed well in practice \cite{baumrucker2008mpec, dabadghao2023complementarity}. Nevertheless, other representations for MPCC constraints can be used within the approach we propose in Section \ref{sec:infinte_mpcc}. The  smooth-max approximation represents \eqref{eq:complementarity_constraints} as:
\begin{equation}
    y^1 - \max(0, y^1-y^0)=0
    \label{eq:max_cc_reformulation}
\end{equation}
where the $\max$ operator can be approximated smoothly as:
\begin{equation}
    \frac{1}{2}\left(y^1 + y^0 - \sqrt{(y^1 + y^0)^2 +\delta^2} \right)=0
    \label{eq:max_cc_approximation}
\end{equation}
where $\delta \in \mathbb{R}_+$ is a small numerical tolerance such that the first- and second-order constraint gradients remain continuous.

In practice, MPCC problems are often not directly solved in a single-solve formulation since complementarity constraints are often not numerically well-posed. Therefore, a sequence of formulations with relaxed constraints:
\begin{equation}
    \begin{aligned}
        & -\epsilon \leq \frac{1}{2}\left(y^1 + y^0 - \sqrt{(y^1 + y^0)^2 +\delta^2} \right) \leq \epsilon\\
        &  1 - \epsilon \leq y^0 + y^1 \leq 1 + \epsilon \\
        & 0 \leq y^0, y^1 \leq 1 
    \end{aligned}
    \label{eq:binary_complementarity_constraints_approx}
\end{equation}
are solved with shrinking values of the tolerance $\epsilon \in (0,1]$ which yields the solution as $\epsilon \rightarrow 0$. For a more thorough discussion on MPCC, alternative NLP formulations, and convergence properties, we refer the reader to \cite{biegler2010nonlinear, wang2023mpcc}.

\section{Event Constraints}\label{sec:event_constrs}
In this section, we formalize the definition of event constrained InfiniteOpt problems and show how event constraints stem from chance constraints by generalizing the domain and the constraint aggregation logic.

\subsection{General Formulation}
We define an event constrained InfiniteOpt formulation by replacing the measured constraint in \eqref{eq:infiniteopt_form} with an event constraint:
\begin{equation}
    \begin{aligned}
        &&\min_{z \in \mathcal{Z}, q(d) \in \mathcal{Q}} &&& M_d f(z, Dq, q(d), d) \\
        && \text{s.t.} &&& g(z, Dq, q(d), d) \leq 0, \ \ d \in \mathcal{D} \\
        &&&&& \mathbb{P}_d\Big(\Omega\big(h(z, Dq, q(d), d) \leq 0\big)\Big) \geq \alpha.
    \end{aligned}
    \label{eq:event_form}
\end{equation}
Here, $\mathbb{P}_d$ is a generalization of the stochastic measure operator $\mathbb{P}_\xi$ for general infinite parameters $d$ and $\Omega : \{\text{True}, \text{False}\}^{n_h} \mapsto \{\text{True}, \text{False}\}$ encodes \emph{event logic} to summarize the constraints $h(d) \leq 0$. Problem \eqref{eq:event_form} is quite general and can be applied to a wide breadth of InfiniteOpt problem classes such as two-stage stochastic optimization, optimal control, and PDE-constrained optimization. In similar manner to \eqref{eq:infiniteopt_form}, we observe that \eqref{eq:event_form} produces the stochastic formulation in \eqref{eq:chance_constr_form} as a special case when we choose $d = \xi \in \mathcal{D}_\xi$, remove $Dq$, let $M_d = R_\xi$, and use {\footnotesize AND} logic in $\Omega(\cdot)$:
\begin{equation}
    \Omega\big(h(\xi) \leq 0\big) = \left(\bigwedge_{i \in \mathcal{I}}h_i(\xi) \leq 0\right).
\end{equation}
We discuss the implications of the unique event constraint modeling components in the sections below where we demonstrate how event constraints naturally generalize the scope of traditional chance constraints.

\subsection{Generalizing the Domain} \label{sec:general_domain}
Following the use of a general infinite parameter in Problem \eqref{eq:infiniteopt_form} and the measure operator representation of a chance constraint in Equation \eqref{eq:chance_measure}, we can generalize the measure operator in \eqref{eq:chance_measure} to act on general InfiniteOpt constraint functions $h(z, Dy, q(d), d)$ (denoted $h(d)$ for compactness):
\begin{equation}
    M_d h(d) = \mathbb{P}_d\left(\bigwedge_{i \in \mathcal{I}}h_i(d) \leq 0\right) - \alpha
\end{equation}
which readily leads to an analog of a chance constraint for general InfiniteOpt problems:
\begin{equation}
    \mathbb{P}_d\left(\bigwedge_{i \in \mathcal{I}}h_i(d) \leq 0\right) \geq \alpha
    \label{eq:chance_constr_analog}
\end{equation}
where we explicitly show the {\footnotesize AND} operator $\bigwedge$ that is typically omitted in the joint chance constraint literature. Here, the probabilistic measure operator $\mathbb{P}_d$ is an unconventional modeling object for deterministic formulations (e.g., $d = t$), but we can reformulate it using an integral measure. First, we can rewrite \eqref{eq:chance_constr_analog} with a generalized expectation operator $\mathbb{E}_d$ and an indicator function:
\begin{equation}
    \mathbb{E}_d\left[\mathbbm{1}_{h(d) \leq 0}(d)\right] \geq \alpha
    \label{eq:expect_event}
\end{equation}
where again we write $\mathbbm{1}_{h(d) \leq 0}(d) := \mathbbm{1}_{\bigwedge_{i \in \mathcal{I}} h_i(z, Dy, q(d), d) \leq 0}(z, Dy, q(d), d)$ for compactness. Assuming $d$ is continuous over $\mathcal{D}$, we obtain:
\begin{equation}
    \int_{d \in \mathcal{D}}\mathbbm{1}_{h(d) \leq 0}(d) p(d) dd \geq \alpha
    \label{eq:general_chance_int}
\end{equation}
where $p(d)$ is a weighting function that satisfies $\int_{d \in \mathcal{D}} p(d) dd = 1$. We refer to the logic encoded in the indicator function (i.e., $\bigwedge_{i \in \mathcal{I}}h_i(d) \leq 0$) as an \emph{event} which we wish to enforce to a certain measured extent over the domain $\mathcal{D}$ (e.g., enforce the event that all constraints $h(t) \leq 0$ are respected for a certain fraction $\alpha$ of a time horizon $\mathcal{D}_t$). Hence, we refer to this generalized class of chance constraints as \emph{event constraints} since the term "chance" is not well-suited for use with deterministic InfiniteOpt formulations. 

A key consideration in formulating event constraints is the choice of weighting function $p(d)$. For SO formulations (i.e., $d = \xi$), $p(\xi)$ corresponds to the pdf of the distribution that describes $\xi$. However, for deterministic formulations, $p(d)$ can be interpreted as a weighting function that gives us the modeling flexibility to place arbitrary priority over the domain $\mathcal{D}$ following the analyses presented in \cite{pulsipher2022unifying} and \cite{pulsipher2022new}. For instance, in optimal control with $d = t \in \mathcal{D}_t = [0, t_f]$, it is natural to select the uniform weighting 
\begin{equation}
    p(t) = \frac{1}{t_f}
    \label{eq:uniform_time}
\end{equation}
which makes the integral in \eqref{eq:general_chance_int} equivalent to the geometric mean of $\mathbbm{1}_{h(t) \leq 0}(t)$ \cite{stewart2009calculus}. Alternatively, we could choose a time-valued analog of a truncated exponential pdf:
\begin{equation}
    p(t; \nu) = \frac{\exp\big(-\frac{t}{\nu}\big)}{\nu - \nu \exp\big(-\frac{t_f}{\nu}\big)}
\end{equation}
where $\nu > 0$. This places priority on earlier times, making it analogous to a discount factor \cite{shin2022exponential}. 

\subsection{Generalizing the Logic}
Traditional chance constraints and the event constraint generalization shown in \eqref{eq:chance_constr_analog} are restricted to the logical intersection of a constraint set $h(d) \leq 0$ for every value of $d$ as illustrated in Figure \ref{fig:joint_logic}. This presents a straightforward strategy for posing a chance/event constraint over a collection of constraints and corresponds to joint probability distributions in a SO context. However, intersection logic can be overly conservative for a variety of applications where complex logic (i.e., logical regions beyond the intersection shown Figure \ref{fig:joint_logic}) is used in practice to enforce conditions. For instance, power grid operators will often place different priorities on respecting capacity restrictions over the diverse collection of generators and lines in a particular system. Thus, mandating that all constraints be simultaneously satisfied at a single instance of $d$ may be overly restrictive. 

\begin{figure}[!htb]
	\includegraphics[width=0.3\textwidth]{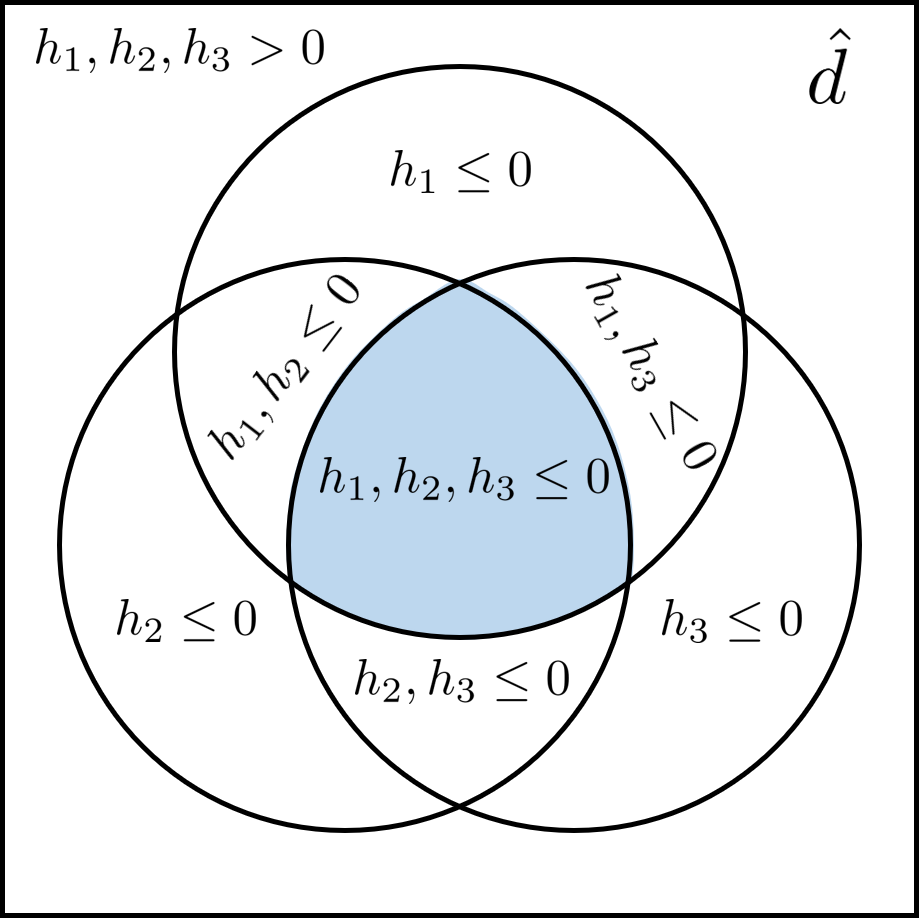}
	\centering
	\caption{An illustration of the classical logical intersection at a particular value $\hat{d}$ of $d$ for a tertiary constraint system in \eqref{eq:chance_constr_analog}. Using logical operators this is expressed $h_1(\hat{d}) \leq 0 \wedge h_2(\hat{d}) \leq 0 \wedge h_3(\hat{d}) \leq 0$.}
	\label{fig:joint_logic}
\end{figure}

The explicit usage of the {\footnotesize AND} operator $\bigwedge$ in \eqref{eq:joint_chance_constr} and \eqref{eq:chance_constr_analog} highlights how logic operators summarize the constraint set to a single logical value $\{\text{True}, \text{False}\}$. This perspective readily inspires the use of more complex combinations of logical operators to aggregate the constraint set $h(d) \leq 0$. Possible choices include the aforemetioned operators {\footnotesize AND}, {\footnotesize OR}, {\footnotesize XOR}, {\footnotesize NEGATION}, {\footnotesize IMPLICATION}, and {\footnotesize EQUIVALENCE}. The use of these operators enables us to capture larger logical regions and avoid the conservativeness associated with using the intersection (see Figure \ref{fig:and-or}). Moreover, we can adapt logic functions from constraint programming such as {\footnotesize ATLEAST}, {\footnotesize ATMOST}, and {\footnotesize EXACTLY}. Figure \ref{fig:atleast} illustrates the logical region captured with {\footnotesize ATLEAST} logic. 

\begin{figure}[!htb]
     \centering
     \begin{subfigure}[b]{0.4\textwidth}
         \centering
         \includegraphics[width=0.75\textwidth]{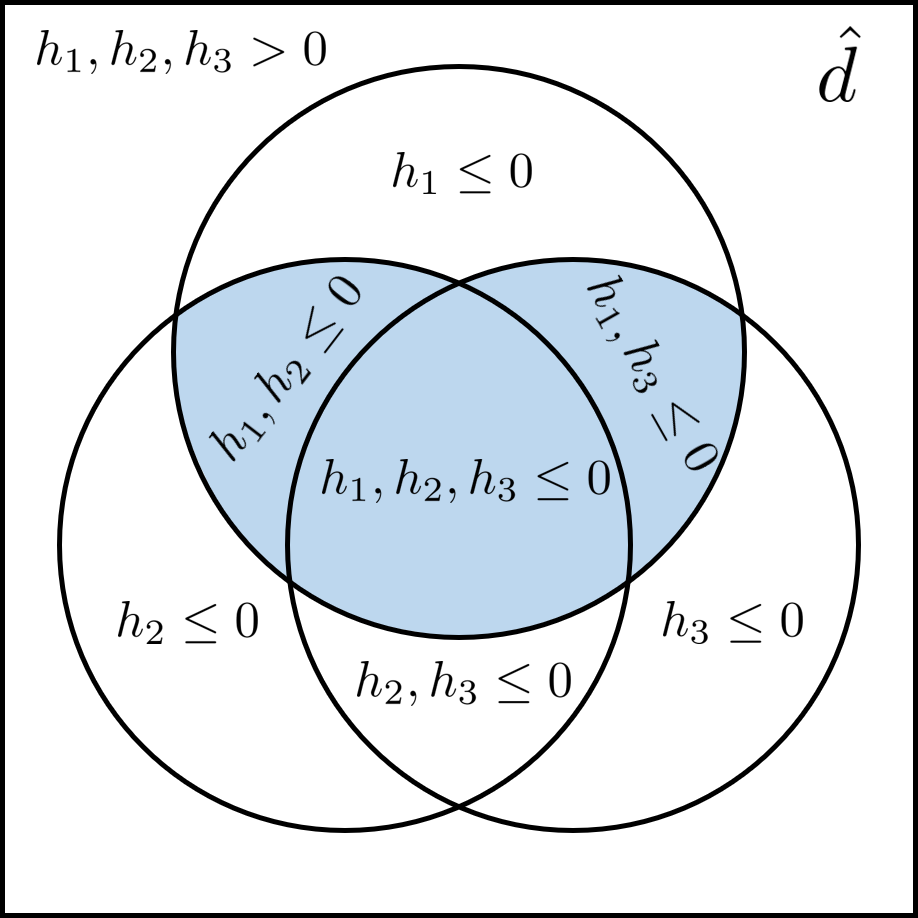}
         \caption{$h_1(\hat{d}) \leq 0 \wedge (h_2(\hat{d}) \leq 0 \vee h_3(\hat{d}) \leq 0)$}
         \label{fig:and-or}
     \end{subfigure}
     \hspace{1cm}
     \begin{subfigure}[b]{0.4\textwidth}
         \centering
         \includegraphics[width=0.75\textwidth]{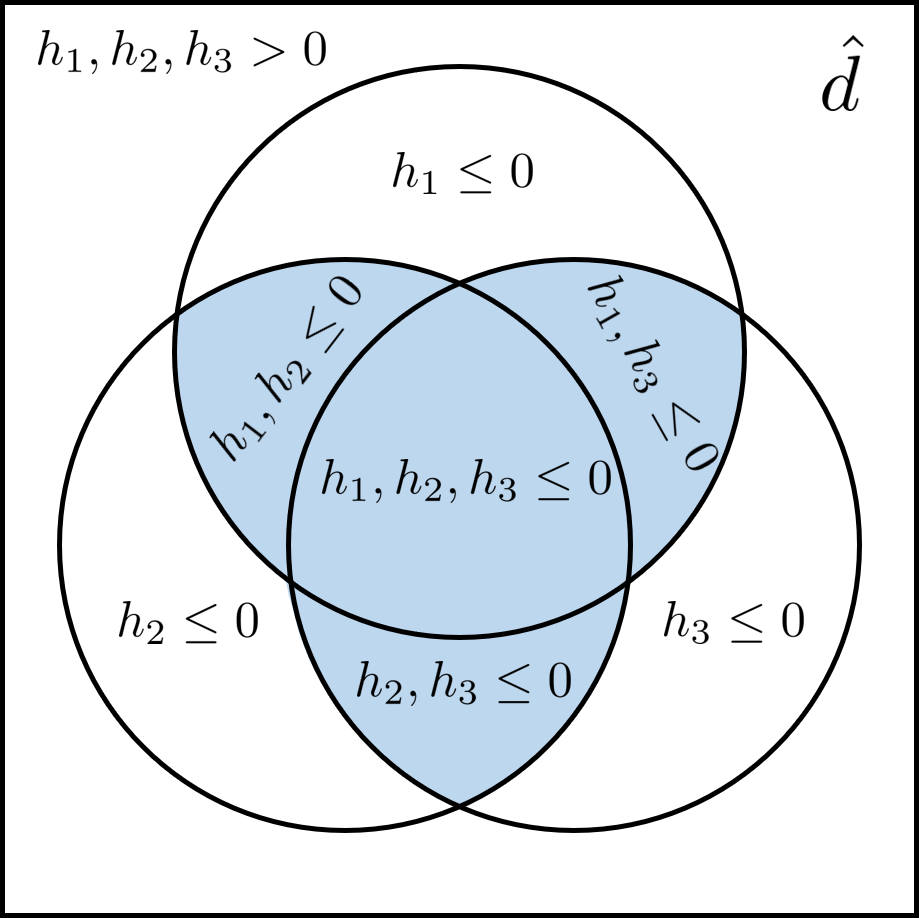}
         \caption{{\footnotesize ATLEAST}$(2, h_1(\hat{d}) \leq 0, h_2(\hat{d}) \leq 0, h_3(\hat{d}) \leq 0)$}
         \label{fig:atleast}
     \end{subfigure}
    \caption{Examples of generalized logic in a tertiary constraint system at a particular value $\hat{d}$ that encompasses more than just the logical intersection.}
    \label{fig:general_logic}
\end{figure}

Hence, we can generalize event constraints to use arbitrary combinations of logical operators (encoded in a summarizing function $\Omega(\cdot)$):
\begin{equation}
    \mathbb{P}_d\Big(\Omega\big(h(d) \leq 0\big)\Big) \geq \alpha
    \label{eq:general_event_constr}
\end{equation}
rather than the intersection considered in \eqref{eq:chance_constr_analog}. Substituting \eqref{eq:general_event_constr} in \eqref{eq:infiniteopt_form} leads to the general event constrained program presented in \eqref{eq:event_form}.

\subsection{Previous Solution Strategies} \label{sec:previous_strategies}
In \cite{pulsipher2022unifying} (where event constraints are first proposed), the authors used a big-M approach (analogous to that shown for chance constraints in Section \ref{sec:chance_constrs}) to reformulate the indicator function in \eqref{eq:expect_event} via binary variables $y_i(d) \in \{0, 1\}, \ i \in \mathcal{I}$ to obtain:
\begin{equation}
    \begin{aligned}
        &h_i(d) \leq (1 - y_i(d)) M_i, \ \ i \in \mathcal{I}, d \in \mathcal{D} \\
        &\mathbb{E}_d[\Omega(y(d))] \geq \alpha
    \end{aligned}
    \label{eq:event_bigm}
\end{equation}
where $M_i \in \mathbb{R}_+$ is a sufficiently large upper bound that allows relaxing the constraint when $y_i(\xi) = 0$. However, no systematic approach was proposed to convert $\Omega(y(d))$ into a system of algebraic constraints. In this work, we will address this methodological gap using GDP in as depicted in Section \ref{sec:gdp}. For now, we highlight that in the special case where $\Omega(\cdot)$ only uses {\footnotesize AND} operators, we can express \eqref{eq:event_bigm} using a single binary variable $y(d) \in \{0, 1\}$:
\begin{equation}
    \begin{aligned}
        &h_i(d) \leq (1 - y(d)) M_i, \ \ i \in \mathcal{I}, d \in \mathcal{D} \\
        &\mathbb{E}_d[y(d)] \geq \alpha.
    \end{aligned}
    \label{eq:infinite_bigm_and}
\end{equation}

Similar formulations in the literature are often discretized using a finite set of points $\hat{\mathcal{D}} := \{\hat{d}_k : k \in \mathcal{K}\}$ to obtain a finite-dimensional optimization problem that is compatible with conventional optimization solvers. The integral behind the expectation is approximated using an appropriate numerical scheme over $\hat{\mathcal{D}}$ (e.g., trapezoid rule). In the case that the points are uniformly spaced for a deterministic problem or Monte Carlo sampled for an SO problem, \eqref{eq:infinite_bigm_and} becomes:
\begin{equation}
    \begin{aligned}
        &h_i(\hat{d}_k) \leq (1 - y_k) M_i, \ \ i \in \mathcal{I}, k \in \mathcal{K} \\
        &\frac{1}{|\mathcal{K}|}\sum_{k \in \mathcal{K}}y_k p(\hat{d}_k)\geq \alpha.
    \end{aligned}
    \label{eq:finite_bigm_and}
\end{equation}

In Section \ref{sec:formulations}, we propose a collection of alternative reformulation/solution approaches to improve the computational tractability of event constrained InfiniteOpt problems. These include generalized analogs of the nonlinear approximation approaches discussed in Section \ref{sec:individual_chance} to avoid the introduction of binary variables $y(d)$ which often limit the tractability of nonlinear event constrained problems.

\section{Event Constraint Solution Techniques}\label{sec:formulations}
In this section, we present new representations and reformulations of event constraints that enable us to systematically model arbitrary constraint aggregation logic and provide a toolbox of methods to better promote solution efficiency. Similar to Section \ref{sec:general_domain}, throughout this section we address InfiniteOpt constraint functions $h(z, Dy, q(d), d)$ as $h(d)$ for compactness. For the same reason, we do not explicitly write the domain of variables $ q(d) \in \mathcal{Q}$ and $ z \in \mathcal{Z}$.

\subsection{GDP Representation}\label{sec:gdp-representation}
We propose and discuss an infinite-dimensional GDP-based formulation that is equivalent to \eqref{eq:event_form}. We provide strategies to solve this formulation using existing GDP modeling tools and draw a parallel between these techniques and some existing methods for solving chance constraints.

To derive the formulation in its natural infinite-dimensional form, we begin by defining Boolean variables $Y_i(d) \in \{\text{True}, \text{False}\}, \ i \in \mathcal{I},$ for each constraint $h_i(d) \leq 0, \ i \in \mathcal{I}$ in \eqref{eq:general_event_constr} that indicate when each constraint is satisfied (i.e., if $Y_i(d) = \text{True}$, then $h_i(\cdot) \leq 0$). With these we can impose a disjunction at each constraint:
\begin{equation}
    \left[\begin{gathered}
        Y_i(d)\\
        h_i(d) \leq 0
    \end{gathered}\right] \vee
    \left[\begin{gathered}
        \neg Y_i(d)\\
        h_i(d) > 0
    \end{gathered}\right], \ \ i \in \mathcal{I}, \ d \in \mathcal{D}.\\
    \label{eq:gdp_disjunctions}
\end{equation}
We can use the Boolean variables $Y_i$ to enforce the event constraint in \eqref{eq:general_event_constr}:
\begin{subequations}
    \begin{align}
        &\Omega(Y(d)) \Leftrightarrow W(d), \ \  d \in \mathcal{D} \label{eq:event_logic}\\
        &\mathbb{E}_d[W(d)] \geq \alpha
    \end{align}
    \label{eq:gdp_event_constraint}
\end{subequations}
where $W(d) \in \{\text{True}, \text{False}\}$ indicates whether an event occurs satisfying the event logic function $\Omega(\cdot)$. Here, the logical constraints encoded in \eqref{eq:event_logic} can be systematically transformed into linear inequalities by first converting them to CNF following the methodology described in Section \ref{sec:gdp}. Substituting \eqref{eq:gdp_disjunctions} and \eqref{eq:gdp_event_constraint} into \eqref{eq:event_form}, we obtain the full GDP formulation for event-constrained InfiniteOpt problems:
\begin{equation}
    \begin{aligned}
        &&\min_{} &&& M_d f(z, Dq, q(d), d) \\
        && \text{s.t.} &&& g_j(z, Dq, q(d), d) \leq 0, && j \in \mathcal{J}, \ d \in \mathcal{D} \\
        &&&&&   \left[\begin{gathered}
                    Y_i(d)\\
                    h_i(d) \leq 0
                \end{gathered}\right] \vee
                \left[\begin{gathered}
                    \neg Y_i(d)\\
                    h_i(d) > 0
                \end{gathered}\right], && i \in \mathcal{I}, \ d \in \mathcal{D} \\ 
        &&&&& \Omega(Y(d)) \iff W(d), &&  d \in \mathcal{D} \\
        &&&&& \mathbb{E}_d[W(d)] \geq \alpha \\
        % &&&&& q(d) \in \mathcal{Q} & \ \ d \in \mathcal{D} \\
        &&&&& Y_i(d), W(d) \in \{\text{True}, \text{False}\}, && i \in \mathcal{I}, \ d \in \mathcal{D}\\
        % &&&&& z \in \mathcal{Z}.\\
    \end{aligned}
    \label{eq:infinite_gdp}
\end{equation}
Equation \eqref{eq:gdp_disjunctions} has a special structure where a single Boolean variable is related with its negation in a disjunction that only has two disjuncts. 
This structure guarantees exclusivity in the disjunction, hence the $\text{\footnotesize EXACTLY}(1,\cdot)$ operator shown in \eqref{eqn:gdp.general} is not required.

\begin{proposition}
\label{thm:exact_gdp}
    Formulation \eqref{eq:infinite_gdp} is exact meaning that a Boolean variable $Y_i(d)= \text{True}$ if and only if constraints $h_i(d) \leq 0$ are satisfied for $i \in \mathcal{I}, \ d \in \mathcal{D}$. Consequently, the expectation in \eqref{eq:gdp_event_constraint} captures every realization where $h_i(d) \leq 0$ holds.
\end{proposition}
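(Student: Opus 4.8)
The plan is to establish the pointwise biconditional $Y_i(d) = \text{True} \iff h_i(d) \leq 0$ directly from the two–disjunct structure in \eqref{eq:gdp_disjunctions}, and then to propagate this equivalence through the logical definition \eqref{eq:event_logic} of $W(d)$ so that the constraint $\mathbb{E}_d[W(d)] \geq \alpha$ collapses exactly onto the event constraint \eqref{eq:general_event_constr}; this shows \eqref{eq:infinite_gdp} and \eqref{eq:event_form} share the same feasible set and objective.

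First I would fix $i \in \mathcal{I}$ and $d \in \mathcal{D}$ and invoke the semantics of a GDP disjunction: exactly one disjunct is selected, its indicator Boolean is True and its contained constraints are enforced, while the constraints of the non-selected disjunct are relaxed. As observed just before the proposition, the two disjuncts here are indexed by $Y_i(d)$ and $\neg Y_i(d)$, so the $\text{\footnotesize EXACTLY}(1,\cdot)$ requirement of \eqref{eqn:gdp.general} holds automatically — a Boolean and its negation can neither both be True nor both be False. Then I would split into two cases. If $Y_i(d) = \text{True}$, the left disjunct is active and enforces $h_i(d) \leq 0$. If $Y_i(d) = \text{False}$, then $\neg Y_i(d) = \text{True}$, the right disjunct is active and enforces $h_i(d) > 0$, i.e.\ $h_i(d) \leq 0$ is violated. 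Since for real-valued $h_i$ the predicate $h_i(d) > 0$ is exactly the negation of $h_i(d) \leq 0$, the two cases are exhaustive and mutually exclusive: this yields the claimed equivalence and, as a byproduct, shows the disjunction is always satisfiable, so no feasible point of \eqref{eq:event_form} is excluded by introducing the $Y_i(d)$.

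For the ``consequently'' clause, I would substitute the pointwise equivalence into \eqref{eq:event_logic}: since $Y_i(d)$ agrees with the truth value of $h_i(d) \leq 0$ for every $i$, we obtain $W(d) \Leftrightarrow \Omega(h(d) \leq 0)$ for all $d \in \mathcal{D}$. Reading the Boolean $W(d)$ as its $\{0,1\}$ indicator, $\mathbb{E}_d[W(d)]$ coincides with $\mathbb{E}_d[\mathbbm{1}_{\Omega(h(d)\leq 0)}(d)] = \mathbb{P}_d\big(\Omega(h(d)\leq 0)\big)$, so $\mathbb{E}_d[W(d)] \geq \alpha$ in \eqref{eq:gdp_event_constraint} is literally the event constraint \eqref{eq:general_event_constr}. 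Combined with the fact that \eqref{eq:infinite_gdp} retains the same objective and algebraic constraints as \eqref{eq:event_form}, this gives exactness.

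I do not expect a deep obstacle — the argument is essentially a bookkeeping check that the two-disjunct structure forces the indicator Boolean to track constraint satisfaction in \emph{both} directions. The only points warranting care, which I would address as short remarks rather than belabor, are: (i) confirming that the pair $\{h_i(d)\leq 0,\ h_i(d)>0\}$ genuinely partitions the outcomes, so the implicit $\text{\footnotesize EXACTLY}(1,\cdot)$ is never vacuously violated; and (ii) the measure-theoretic bridge from the Boolean-valued map $d \mapsto W(d)$ to $\mathbb{P}_d(\cdot)$, which needs the set $\{d : \Omega(h(d)\leq 0)\}$ to be measurable — this is guaranteed by the standard regularity assumptions on $h$ already implicit in writing \eqref{eq:expect_event}.
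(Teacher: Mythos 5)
Your proposal is correct and follows essentially the same argument as the paper: both rely on the two-disjunct structure of \eqref{eq:gdp_disjunctions}, where the left disjunct gives $Y_i(d) \Rightarrow h_i(d) \leq 0$ and the right disjunct, together with the fact that $\{h_i(d) \leq 0\}$ and $\{h_i(d) > 0\}$ partition the outcomes, forces the converse; your case split on the truth value of $Y_i(d)$ is just a repackaging of the paper's ($\Rightarrow$)/($\Leftarrow$) directions. Your explicit treatment of the ``consequently'' clause (propagating the equivalence through \eqref{eq:event_logic} to recover \eqref{eq:general_event_constr}) is a small addition the paper leaves implicit, but it does not change the approach.
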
 

\begin{proof}
    We will prove the proposition by showing both directions of the implication.

    \begin{itemize}
        \item \textbf{($\Rightarrow$)} This follows from applying the definition of disjunction to the left disjunct. In GDP formulations, a Boolean variable in a disjunction yields one-way implication logic. 
If the Boolean variable is True, its set is enforced yielding:
\begin{equation}
Y_i(d) \Rightarrow \{ h_i(d) \leq 0 \} \quad  i \in \mathcal{I}, \ d \in \mathcal{D}\\
    \label{eq:right_side}
\end{equation}
however, satisfying a set does not require its indicator Boolean variable to be True. Therefore, thus far there could exist instances of $i \in \mathcal{I}, \ d \in \mathcal{D}$ with $Y_i(d)=\text{False}$ where $ h_i(d) \leq 0$ holds. 

\item \textbf{($\Leftarrow$)} The case mentioned above where $Y_i(d)=\text{False}$ and $ h_i(d) \leq 0$ holds is prevented by the right disjunct in \eqref{eq:gdp_disjunctions} as:
\begin{equation}
\{ Y_i(d)=\text{False} \} \Leftrightarrow \neg Y_i(d) \Rightarrow \{ h_i(d) > 0 \} \quad  i \in \mathcal{I}, \ d \in \mathcal{D}.\\
    \label{eq:right_disjunct}
\end{equation}
Note that the sets considered in \eqref{eq:gdp_disjunctions} correspond to a complementary partition of the domain, implying the entire feasible region of the problem is contained in:
\begin{equation}
\{ h_i(d) \leq 0\} \cup \{ h_i(d) > 0 \} \quad  i \in \mathcal{I}, \ d \in \mathcal{D}.\\
    \label{eq:union}
\end{equation}
Hence, the proposed disjunction is always proper \cite{trespalacios2016cutting} since:
\begin{equation}
\{ h_i(d) \leq 0\} \cap \{ h_i(d) > 0 \} =\varnothing \quad  i \in \mathcal{I}, \ d \in \mathcal{D}.\\
    \label{eq:intersect}
\end{equation}
and the entire domain can be implied by the value of a single Boolean variable $Y_i(d)$ as:
\begin{equation}
Y_i(d)=\begin{cases}
        \text{True} \Rightarrow [ h_i(d) \leq 0 ], \\
        \text{False} \Rightarrow [ h_i(d) > 0 ]
    \end{cases}
    \quad  i \in \mathcal{I}, \ d \in \mathcal{D}.
    \label{eq:all_space}
\end{equation}

Given that a Boolean variable has two values and \eqref{eq:union} covers the entire domain, we find that $h_i(d) \leq 0$ is violated only when $Y_i(d) = \text{False}$, which in the context context of a proper disjunction implies.
\begin{equation}
 \{ h_i(d) \leq 0 \} \Rightarrow Y_i(d) \quad  i \in \mathcal{I}, \ d \in \mathcal{D}\\
    \label{eq:left_side}
\end{equation}
    \end{itemize}
effectively showing the GDP formulation in \eqref{eq:infinite_gdp} is exact.
\end{proof}

Formulation \eqref{eq:infinite_gdp} can be made finite-dimensional via SAA; however, some of the GDP solution methods described in the next subsection can be utilized directly to \eqref{eq:infinite_gdp} before SAA or any other finite transformation technique is applied.

\subsubsection{Solution Techniques}\label{sec:gdp_solution_techniques}
To solve the infinite-dimensional GDP formulation shown in \eqref{eq:infinite_gdp}, all equations must be in terms of non-strict inequalities. Therefore we must add a small positive numerical tolerance $\delta$ to the right disjunct in \eqref{eq:gdp_disjunctions} as:
\begin{equation}
    \left[\begin{gathered}
        Y_i(d)\\
        h_i(d) \leq 0
    \end{gathered}\right] \vee
    \left[\begin{gathered}
        \neg Y_i(d)\\
        - h_i(d) + \delta \leq 0
    \end{gathered}\right], \ \ i \in \mathcal{I}, \ d \in \mathcal{D}\\.
    \label{eq:gdp_disjunctions_delta}
\end{equation}
which states that constraints $h_i(d)$ have to be violated by at least $\delta$ to be considered a violation. Note that the result from Proposition \ref{thm:exact_gdp} now holds asymptotically as $\delta$ tends to zero. By substituting \eqref{eq:gdp_disjunctions} for \eqref{eq:gdp_disjunctions_delta} in \eqref{eq:infinite_gdp}, we obtain an infinite-dimensional GDP formulation for event constraints that fits traditional mathematical programming notation.

Traditional GDP to mixed-integer (non)linear program (MI(N)LP) transformations such as the big-M reformulation (BM) \cite{raman1994modelling} or the hull reformulation (HR) \cite{lee2000new} can be applied to directly to solve the problem. GDP formulations can also be transformed by indicator constraints which are supported by several modern solvers such as Gurobi \cite{belotti2016handling, perez2023julia, achterberg2019s}.
A one-sided big-M transformation, as described in \cite{pulsipher2022unifying} (see \eqref{eq:event_bigm}), mirrors traditional big-M approaches common in chance constraint literature. Strictly speaking, this reformulation, transforms solely the left disjunct in \eqref{eq:gdp_disjunctions_delta}, and differs from the canonical GDP big-M transformation (referred to as BM), which addresses both sides of the disjunction (hence called two-sided reformulation). The BM reformulation of \eqref{eq:infinite_gdp} is given by:

\begin{equation}
    \begin{aligned}
        &&\min_{} &&& M_d f(z, Dq, q(d), d) \\
        && \text{s.t.} &&& g_j(z, Dq, q(d), d) \leq 0, && j \in \mathcal{J}, \ d \in \mathcal{D} \\
        &&&&& h_i(d) \leq  M_i (1 - y(d)),  && i \in \mathcal{I}, \ d \in \mathcal{D} \\
        &&&&& \delta - h_i(d) \leq M_i y(d),  && i \in \mathcal{I}, \ d \in \mathcal{D} \\ 
        &&&&& \mathbb{E}_d[\Omega_{MIP}(y(d))] \geq \alpha \\
        % &&&&& q(d) \in \mathcal{Q} & \ \ d \in \mathcal{D} \\
        &&&&& y_i(d) \in \{0,1\}, && i \in \mathcal{I}, \ d \in \mathcal{D}\\
        % &&&&& z \in \mathcal{Z}.\\
    \end{aligned}
    \label{eq:infinite_gdp_BM}
\end{equation}
where $\Omega_{MIP}(\cdot)$ represents the logical propositions reformulated as mixed-integer programming constraints following the systematic procedure described in Section \ref{sec:gdp}. Note the traditional chance constraint big-M reformulation is not exact, implying there might be instances where $y_i(d)=0$ and $h_i(d) \leq 0$ is still satisfied for some $i \in \mathcal{I}, \ d \in \mathcal{D}$ (an infeasible scenario when applying the GDP-derived BM reformulation according to Proposition \ref{thm:exact_gdp}). However, such occurrences are rare in practice as objective functions typically drive towards exact solutions.

In this study, we focus solely on GDP-MINLP transformations, such as BM and HR, as they entail a single reformulation step that has been studied for InfiniteOpt problems. While more advanced logic-based solution techniques like LOA \cite{turkay1996logic} and LBB \cite{grossmann2002review} could theoretically be employed to address \eqref{eq:infinite_gdp}, they lack formalization and study in the context of infinite GDP formulations. Consequently, they are beyond the scope of this work and remain as potential directions for future research.

\subsection{Approximate Continuous Reformulations}\label{sec:approximations}
In Section \ref{sec:individual_chance}, we discussed various approaches to approximate the behavior of a chance constraint without using binary variables. 
In this section, we demonstrate how these techniques, originally developed for stochastic domains, can also be applied to general InfiniteOpt domains. The argument follows the one presented in \cite{pulsipher2020measuring} which states that deterministic InfiniteOpt problems (e.g., dynamic optimization) can be posed as a special case of two-stage stochastic optimization problems. Consider Problem \eqref{eq:chance_constr_form} where the stochastic infinite parameter $\xi :\Psi \mapsto \mathcal{D}_\xi$ is defined by the probability space $(\Psi, \mathscr{F}, \mathcal{D}_\xi)$ which means that the infinite domain $\mathcal{D}_\xi$ corresponds to the co-domain of the distribution. In the special case that the sample space $\Psi$ only contains a single realization (i.e., $|\Psi| = 1$), the distribution collapses and $\xi$ becomes deterministic. Hence, any deterministic infinite parameter $d \in \mathcal{D}$ (e.g., time and/or position) can be substituted into Problem \eqref{eq:chance_constr_form} since the resulting deterministic InfiniteOpt problem can be viewed as a special case of the original chance-constrained formulation. This illustrates how Problem \eqref{eq:chance_constr_form} can be generalized to other domains $d$ which is what led to the general event constrained formulation given in \eqref{eq:event_form}. With this observation, we argue that the solution techniques (with their associated properties) developed in the literature for chance constrained problems can be directly transferred to event constraints. This is consistent with the previous observation that risk measures from SO can be readily transferred to dynamic optimization problems \cite{pulsipher2020measuring}. Note that this section focuses on approximations for event constraints that only involve a single constraint. For event constraints with multiple constraints, the approaches discussed in Section \ref{sec:joint2individ} can be used.

\subsubsection{CVaR Approximation}\label{sec:CVaR}
Following the discussion in Section \ref{sec:individual_chance}, the CVaR approximation provides the tightest conservative convex approximation of individual chance constraints \cite{nemirovski2012safe}. The conservativeness of the approximate solution is a key property since it guarantees that the chance constraint is satisfied at the solution. Moreover, this approximation can be efficiently solved using standard (N)LP techniques, eliminating the need to introduce binary variables that require the use of mixed-integer solvers that are more computationally demanding, especially for nonlinear problems.

Following the observation that chance constraint solution techniques can be applied to other infinite domains, we rewrite \eqref{eq:cvar_chance_constr} as:
\begin{equation}
    \text{CVaR}_d(h(d); \alpha) \leq 0
    \label{eq:cvar_chance_constr_general}
\end{equation}
which when integrated into \eqref{eq:event_form} can be equivalently expressed (imposing $|\mathcal{I}| = 1$):
\begin{equation}
    \begin{aligned}
        &&\min_{} &&& M_d f(z, Dq, q(d), d) \\
        && \text{s.t.} &&& g_j(z, Dq, q(d), d) \leq 0, && j \in \mathcal{J}, \ d \in \mathcal{D} \\
        &&&&& \phi(d) \geq h(d) - \lambda, && d \in \mathcal{D} \\ 
        &&&&& \mathbb{E}_d[\phi(d)] \leq -\lambda (1-\alpha) \\
        % &&&&& q(d) \in \mathcal{Q} & \ \ d \in \mathcal{D} \\
        % &&&&& z \in \mathcal{Z}.\\
    \end{aligned}
    \label{eq:infinite_cvar}
\end{equation}
where $\phi$ is a variable that approximates the indicator function shown in \eqref{eq:conservative_indicator} and $\lambda \in \mathbb{R}$ is a finite variable. Formulation \eqref{eq:infinite_cvar} can be solved using standard InfiniteOpt transformation techniques such as direct transcription. As previously stated, CVaR approximations often prove overly conservative in practice, obtaining significantly larger objective values relative to big-M reformulations \cite{nemirovski2007convex}. The conservativeness is clearly demonstrated by the numerical studies presented in Section \ref{sec:cases}.

\subsubsection{SigVaR Approximation}\label{sec:SigVaR}
The SigVaR approximation method discussed in Section \ref{sec:individual_chance} provides a nonconvex approximation for the indicator function based on a modified sigmoidal function \cite{cao2020sigmoidal}. Like CVaR, this approximation is strictly conservative and avoids introducing binary variables such that gradient-based NLP solvers like \textsc{Ipopt} can be used. We can extend \eqref{eq:sig_chance_constr} to a more general domain $\mathcal{D}$ to obtain:
\begin{equation}
    \begin{aligned}
        &&\min_{} &&& M_d f(z, Dq, q(d), d) \\
        && \text{s.t.} &&& g_j(z, Dq, q(d), d) \leq 0, && j \in \mathcal{J}, \ d \in \mathcal{D} \\
        &&&&& \phi(d) \geq 2\frac{1 + \beta}{\beta + \exp(-\gamma h(d))} - 1, && d \in \mathcal{D} \\ 
        &&&&& \mathbb{E}_d[\phi(d)] \leq 1-\alpha \\
        % &&&&& q(d) \in \mathcal{Q} & \ \ d \in \mathcal{D} \\
        % &&&&& z \in \mathcal{Z}.\\
    \end{aligned}
    \label{eq:infinite_sigvar}
\end{equation}
where again $\phi$ approximates the indicator function shown in \eqref{eq:conservative_indicator} and $\beta, \gamma \in \mathbb{R}_+$ are the sigmoidal parameters. Selecting these parameters is not straightforward, as it involves balancing approximation quality and numerical stability. The steepness of the first and second derivatives of the approximation increases with $\mathcal{O}(\gamma)$ and $\mathcal{O}(\gamma^2)$, respectively. Therefore, the authors in \cite{cao2020sigmoidal} offer an algorithm to initialize and refine these parameters progressively via sequential solution of the SigVaR formulation which we generalize for Problem \eqref{eq:infinite_sigvar} in Algorithm \ref{alg:sigvar}. 

\begin{algorithm}
\caption{Sequential SigVaR Algorithm \cite{cao2020sigmoidal}}
\begin{algorithmic} 
\State \textbf{Input:} Desired event satisfaction fraction ($\alpha \in (0,1]$), target ($\beta^*\in \mathbb{R}_+$), step size ($\eta > 1$).
\State \textbf{Output:} Optimal value of variables and objective $(z_k, q_k(d), f_k)$.
\vspace{2mm}
\State \textbf{1. Solve CVaR \eqref{eq:infinite_cvar} to obtain $\lambda(\alpha)$ and Initialize} 
\State $\ \ \ \ $ Set $k \leftarrow 0$. 
\State $\ \ \ \ $ Set $ \Gamma \leftarrow -\frac{1}{\lambda(\alpha)}$
 ; $ \beta_k \leftarrow \bar{\beta}$ (where $\bar{\beta}$ is the positive solution to $\bar{\beta} - \log(2+\bar{\beta})=1$) ; $\gamma_k=\Gamma\frac{\beta_k+1}{2}$.
 % \State Update variables and objective  $(z_k,  \leftarrow 0$ ;
\State $\ \ \ \ $  Update iteration $k \leftarrow k+1$.
\State \textbf{2. Solve SigVaR \eqref{eq:infinite_sigvar} with $\beta_k$ and $\gamma_k$ to obtain $(z_k, q_k(d), f_k)$} \Comment{Initialize with $(z_{k-1}, q_{k-1}(d))$}
    \State $\ \ \ \ $ \textbf{if} {$\beta_k < \beta^*$} \textbf{then}
        \State $\ \ \ \ \ \ \ \ $ Go to \textbf{Step 3.}
    \State $\ \ \ \ $ \textbf{else}
        \State  $\ \ \ \ \ \ \ \ $ Go to \textbf{Step 4.}
    \State $\ \ \ \ $ \textbf{end if}
\State \textbf{3. Update sigmoidal parameters}
\State  $\ \ \ \ $ Set $\beta_{k+1} \leftarrow \eta\beta_k$ ; $\gamma_{k+1} \leftarrow \Gamma\frac{\beta_{k+1}+1}{2}$.
\State  $\ \ \ \ $ Update iteration $k \leftarrow k+1$.
\State  $\ \ \ \ $ Go to \textbf{Step 2.}
\State \textbf{4. Terminate and return $(z_k, q_k(d), f_k)$}
\end{algorithmic}
\label{alg:sigvar}
\end{algorithm}

The algorithm requires defining a target parameter, $\beta^*$, where higher values lead to a better approximation of the indicator function. From our numerical experiments, we found that $\beta^* = 10^5$ typically provided a good starting point. Similarly, specifying the step size $\eta$ entails balancing faster convergence against numerical stability, particularly as the function sharpens with increasing $\beta$. The algorithm initializes sigmoidal parameters using the solution from the CVaR approximation \eqref{eq:infinite_cvar}. Subsequently, it iteratively solves SigVaR approximations \eqref{eq:infinite_sigvar}, starting from the values of the previous iteration to improve numerical convergence, and proceeds to update the sigmoidal parameters. Termination occurs upon reaching or surpassing the desired $\beta^*$ parameter, effectively obtaining the optimal solution. In practice where determining the right $\beta^*$ can be error prone, we found it useful to terminate the algorithm if the solver was unable to find an optimal solution on the latest iteration and use the solution from the previous iteration as the final result. The numerical results in Section \ref{sec:cases} illustrate how SigVaR approximation is able to find high quality solutions similar to standard big-M reformulations, but at a fraction of the computational cost. For more information on the theoretical properties of using the SigVaR approximation, we refer the reader to \cite{cao2020sigmoidal}.

\subsubsection{MPCC Approximation} \label{sec:infinte_mpcc}
MPCC offers an alternative approach to obtain continuous approximations of big-M formulations in event constraint programming. By expressing the big-M equations from \eqref{eq:infinite_gdp_BM} in terms of continuous variables enforced to behave as binary through a complementarity constraint as shown in \eqref{eq:binary_complementarity_constraints}, we derive:
\begin{equation}
    \begin{aligned}
        &&&&& h_i(d) \leq  M_i (1 - y^1(d)), && i \in \mathcal{I}, \ d \in \mathcal{D} \\ 
        &&&&& 0 \leq y^0(d) \perp y^1(d) \geq 0, && d \in \mathcal{D}\\
        &&&&&  y^0(d) + y^1(d) = 1, && d \in \mathcal{D}\\
        &&&&&  0 \leq y^0(d), y^1(d) \leq 1, && d \in \mathcal{D}.
    \end{aligned}
    \label{eq:bigm_mpcc}
\end{equation}
Note that the big-M equations are now expressed with continuous variables that indicate the behavior of the original binary variables. By substituting \eqref{eq:bigm_mpcc} into \eqref{eq:infinite_gdp_BM}, we obtain the MPCC formulation for event constrained problems:
\begin{equation}
    \begin{aligned}
        &&\min_{} &&& M_d f(z, Dq, q(d), d) \\
        && \text{s.t.} &&& g_j(z, Dq, q(d), d) \leq 0, && j \in \mathcal{J}, \ d \in \mathcal{D} \\
        &&&&& h_i(d) \leq M_i (1 - y^1(d)),  && i \in \mathcal{I}, \ d \in \mathcal{D} \\
        &&&&& 0 \leq y^0(d) \perp y^1(d) \geq 0, && d \in \mathcal{D}\\
        &&&&&  y^0(d) + y^1(d) = 1, && d \in \mathcal{D}\\
        &&&&& \mathbb{E}_d[\Omega_{MIP}(y^1(d))] \geq \alpha \\
        % &&&&& q(d) \in \mathcal{Q} & \ \ d \in \mathcal{D} \\
        &&&&&  0 \leq y^0(d), y^1(d) \leq 1, && d \in \mathcal{D}.\\ 
        % &&&&& z \in \mathcal{Z}.\\
    \end{aligned}
    \label{eq:infinite_BM_MPCC}
\end{equation}
Here, the complementarity constraints are nonsmooth and pose a significant challenge for NLP solvers like \textsc{Ipopt}. Hence, smooth approximations like the smooth-max approximation are required in practice as discussed in Section \ref{sec:mpcc}. For this work, we focus on the smooth-max approximation for standard big-M formulations that are analogous to those used in the chance constraint literature. Unlike the CVaR and SigVaR approximations, we cannot guarantee that approximate solutions derived via MPCC formulations are conservative and do not incur measured constraint violations that exceed $1-\alpha$. This means that an approximate solution may violate the event constraint in proportion to the size of the tolerance $\epsilon$. However, the numerical results presented in Section \ref{sec:cases} show that conservative solutions are typically obtained in practice. We also note that MPCC reformulation can be applied to BM and HR GDP transformations discussed in Section \ref{sec:gdp_solution_techniques}, but we leave such investigations to future work.

\section{Case Studies}\label{sec:cases}
In this section, we present numerical studies to illustrate the modeling and solution techniques described in the above sections. These demonstrate the advantages of using event constraints to relax constraints over a portion of the indexing domains in InfiniteOpt problems. Moreover, we investigate the relative advantages/disadvantages of the different solution techniques proposed in Section \ref{sec:formulations}. 
In particular, the case study in Section \ref{sec:grid_case} demonstrates how incorporating domain-specific logic can improve the objective function by avoiding overly conservative designs. 
Sections \ref{sec:pandemic_case} and \ref{sec:pde_case} illustrate the utility of the proposed continuous approximation methods for nonlinear cases in dynamic and PDE-constrained optimization where solving the exact big-M formulation becomes computationally intractable.

The results are evaluated on a Linux machine with 8 Intel\textregistered \, Xeon\textregistered \, Gold 6234 CPUs running at 3.30 GHz with 128 total hardware threads and 1 TB of RAM running Ubuntu. The solvers used where 
Gurobi \texttt{v11.0.2}, 
\textsc{Ipopt} \texttt{v3.14.14} \cite{wachter2006implementation}, and 
Juniper \texttt{v0.9.2} \cite{juniper}. 
Similarly, the solvers 
\textsc{Conopt4} \texttt{v4.31} \cite{drud1994conopt}, 
BARON \texttt{v45.6.0}  \cite{kilincc2018exploiting}
and SCIP \texttt{v8.1} \cite{bestuzheva2021scip} were utilized and accessed through 
GAMS \texttt{v45.6.0}.  
All the scripts were developed in InfinteOpt.jl and are freely available at \url{https://github.com/pulsipher/event_constraints}.

\subsection{Power Grid Design}\label{sec:grid_case}
We begin by examining the design of the IEEE-14 power distribution network, as shown in Figure \ref{fig:ieee_topology} \cite{dabbagchi1962ieee}. In this diagram, generators, demands, and nodes are represented by blue squares, red squares, and green circles, respectively. The objective is to minimize the cost of increasing generator and line capacity to meet a set of stochastic demands, while ensuring compliance with an event constraint related to capacity limits (i.e., how the capacity limits are maintained under varying demand conditions). This case study demonstrates how incorporating domain-specific logic can result in less conservative system designs.
\begin{figure}[!htb]
	\includegraphics[width=0.6\textwidth]{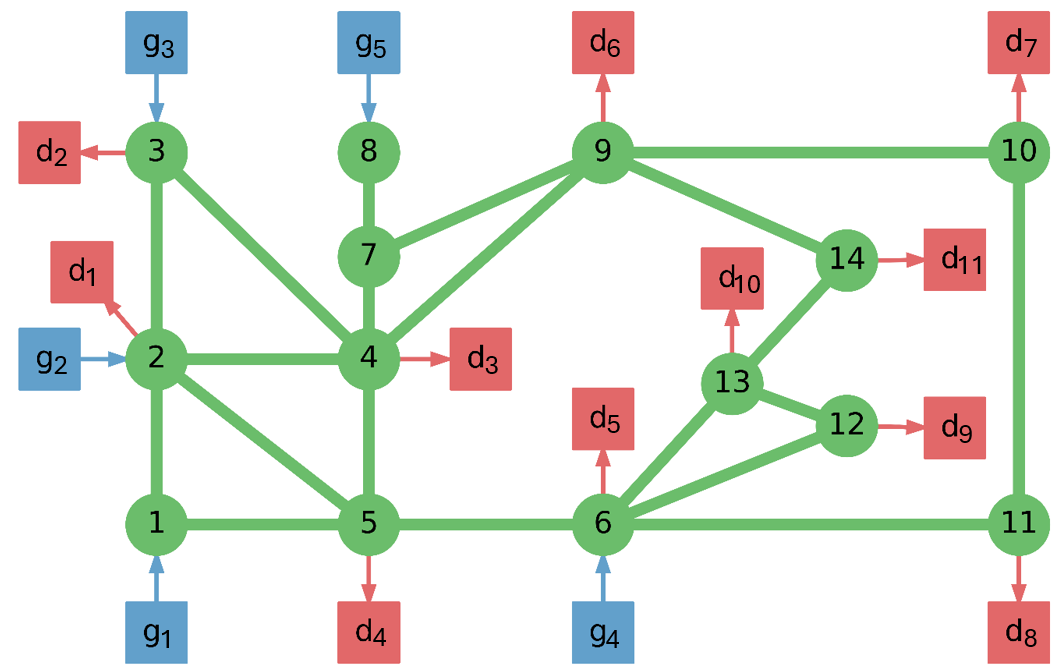}
	\centering
	\caption{IEEE-14 power grid network topology.}
	\label{fig:ieee_topology}
\end{figure}

\subsubsection{Formulation}
We adapted the formulation in \cite{pulsipher2019scalable} to an infinite-dimensional GDP formulation to account for event constraints. We let $\mathcal{G}$ and $\mathcal{L}$ be the sets of generators and lines respectively. The objective is to minimize the cost of the installed generator capacities $z_{g, i} \in [0, 300], \ i \in \mathcal{G},$ and line capacities $z_{l, j} \in [0, 100], \ j \in \mathcal{L}$ as:
\begin{equation}
    \min \sum_{i\in \mathcal{G}}z_{g,i} + \sum_{j \in \mathcal{L}}z_{l,j}.
    \label{eq:grid_objective}
\end{equation} 
The network is subject to an energy balance at each node $n \in \mathcal{N}$ with line flows $q_l(\xi): \mathcal{D}_\xi \times \mathcal{L} \rightarrow [-150, 150]$, generation $q_g(\xi): \mathcal{D}_\xi \times \mathcal{G} \rightarrow [0, 632]$, and uncertain demand $\xi$:
\begin{equation}
    \sum_{j\in \mathcal{L}^{\text{in}}_n}q_{l, j}(\xi) - \sum_{j\in \mathcal{L}^{\text{out}}_n}q_{l, j}(\xi) + \sum_{i\in \mathcal{G}_n}q_{g,i}(\xi) - \sum_{r\in \mathcal{R}_n} \xi_r = 0, \qquad n \in \mathcal{N}, \ \xi \in \mathcal{D}_\xi
    \label{eq:grid_balance}
\end{equation}
where $\mathcal{G}_n$ is the set of generators connected to node $n$, $\mathcal{R}_n$ is the set of demands at node $n$, and $\mathcal{L}^{\text{in}}_n$, $\mathcal{L}^{\text{out}}_n$ denote the set of lines that flow toward and away from a node $n$, respectively. The stochastic demands $\xi$ are described by a multivariate normal distribution $\mathcal{N}(\mu, \Sigma)$ using the mean vector $\mu$ from Table \ref{tab:grid_mean} in Appendix \ref{app:power_grid} and a covariance matrix with $1200$ on the diagonal and $240$ off-diagonal. 
In the case study, a key constraint is determining whether, for a given realization of $\xi$, installed capacities exceed a safety operating threshold. To address this, first we express the disjunction arising from satisfying the capacity limit at each generator $i \in \mathcal{G}$ for a given demand $\xi$:
\begin{equation}
    \left[
    \begin{gathered}
        Y_{g,i}(\xi) \\
        q_{g,i}(\xi) \leq \bar{\bar{q}}_{g,i} + z_{g,i} \\
    \end{gathered}
    \right]
    \vee
    \left[
    \begin{gathered}
        \lnot Y_{g,i}(\xi) \\
        q_{g,i}(\xi) > \bar{\bar{q}}_{g,i} + z_{g,i}  \\
    \end{gathered}
    \right], \qquad i \in \mathcal{G}, \ \xi \in \mathcal{D}_\xi
    \label{eq:gen_disjunction}
\end{equation}
where $\bar{\bar{q}}_{g,i}$ is the threshold generator capacity as shown in Table \ref{tab:grid_bounds} in Appendix \ref{app:power_grid}. 
Similarly, we enforce a disjunction at each line  to determine if the maximum safety threshold $\bar{\bar{q}}_{l, j}=50, \ j \in \mathcal{L}$ is respected:
\begin{equation}
    \begin{aligned}
        &\left[
        \begin{gathered}
            Y^L_{l,j}(\xi) \\
            -\bar{\bar{q}}_{l, j} - z_{l,j} > q_{l, j}(\xi)  \\
        \end{gathered}
        \right]
        \vee
        \left[
        \begin{gathered}
            Y_{l,j}(\xi) \\
            -\bar{\bar{q}}_{l, j} - z_{l,j} \leq q_{l, j}(\xi)\\
            q_{l, j}(\xi) \leq \bar{\bar{q}}_{l, j} + z_{l,j}
        \end{gathered}
        \right]
        \vee
        \left[
        \begin{gathered}
            Y^U_{l,j}(\xi) \\
            q_{l, j}(\xi) > \bar{\bar{q}}_{l,j} + z_{l,j} \\
        \end{gathered}
        \right], \quad j \in \mathcal{L}, \ \xi \in \mathcal{D}_\xi
    \end{aligned}
    \label{eq:line_disjunction}
\end{equation}
where either the capacity limit of the line is met $Y_{l,j}$, or one of the lower $Y^L_{l, j}$ or upper $Y^U_{l,j}$ limits is violated. We seek to enforce an event constraint on whether the safety limits for the generators and lines are respected:
\begin{equation}
    \begin{aligned}
        &&&&&W(\xi) \iff {\Omega}(Y_{g}(\xi), Y_{l}(\xi)), && \xi \in \mathcal{D}_\xi \\
        &&&&&\mathbb{E}_\xi[W(\xi)] \geq \alpha. \\
    \end{aligned}
    \label{eq:grid_event_constr}
\end{equation}

We solve the formulation given by Equations \eqref{eq:grid_objective}-\eqref{eq:grid_event_constr} by applying SAA with 1,000 Monte Carlo samples of $\xi$ where each element of each realization is truncated at 0 such that no negative demands are incurred. 
The resulting model constitutes an mixed-integer linear program that was solved using Gurobi. 
The size of the model depends on the chosen GDP solution method. For reference, when using the standard GDP big-M formulation, the resulting model contains 27,025 continuous variables, 68,000 binary variables, and 133,001 constraints.

\subsubsection{Event Constraint with Intersection and Arbitrary Logic}

We explore how different event logic $\Omega(\cdot)$ impacts the Pareto frontier of this design problem. In particular, we first consider intersection logic (i.e., a joint-chance constraint) as:
\begin{equation}
    \Omega_\wedge(Y_g, Y_l) := \left(\bigwedge_{i \in \mathcal{G}} Y_{g, i} \right) \wedge \left(\bigwedge_{j \in \mathcal{L}} Y_{l, j} \right).
    \label{eq:case_joint_logic}
\end{equation}
which enforces that all generators and lines must simultaneously stay within their respective safety limits for a given scenario with a minimum probability $\alpha$. This logic may be overly conservative since it might not be necessary to have all equipment simultaneously within their safety limits, resulting in prohibitively expensive designs. 

To explore the effect of using different event logic, we examine how incorporating domain-specific knowledge into the logic can facilitate less costly designs. We illustrate this by only requiring a minimum number of generator and line limits to be simultaneously respected:
\begin{equation}
    \begin{aligned}
    \Omega_\text{atleast}(Y_g, Y_l; Y_{g, \text{min}}, Y_{l, \text{min}}) := &\text{\footnotesize ATLEAST}(Y_{g, \text{min}}, Y_g) \wedge \text{\footnotesize ATLEAST}(Y_{l, \text{min}}, Y_l)
    \end{aligned}
    \label{eq:case_atleast_logic}
\end{equation}
where $Y_{g, \text{min}} \in \{1, \dots, |\mathcal{G}|=5\}$, $Y_{l, \text{min}} \in \{1, \dots, |\mathcal{L}|=20\}$ are the minimum number generator and line capacity limits enforced for a particular value of $\xi$, respectively. Note that each {\footnotesize ATLEAST}$(\cdot)$ operator in the above expression can be reformulated into a Boolean variable:
\begin{equation}
\text{\footnotesize ATLEAST}(Y_{k, \text{min}}, Y_k) \Leftrightarrow Y^{\text{ATLEAST}}_k ,  \qquad k \in \{\mathcal{G},\mathcal{L}\}
\end{equation}
associated to the following disjunction following disjunction:
\begin{equation}
    \begin{aligned}
    \left[
    \begin{gathered}
        Y^{\text{ATLEAST}}_k \\
         \sum_{i \in k} y_{k,i} \geq Y_{k,min}  \\
    \end{gathered}
    \right]
    \vee
    \left[
    \begin{gathered}
        \lnot Y^{\text{ATLEAST}}_k \\
        \sum_{i \in k} y_{k,i} \leq Y_{k,min} - 1  \\
    \end{gathered}
    \right], && \qquad k \in \{\mathcal{G},\mathcal{L}\}
    \end{aligned}
\end{equation}
where $y_k$ represents the binary variable associated with $Y_k$. 

\begin{figure}[!htb]
	\includegraphics[width=0.7\textwidth]{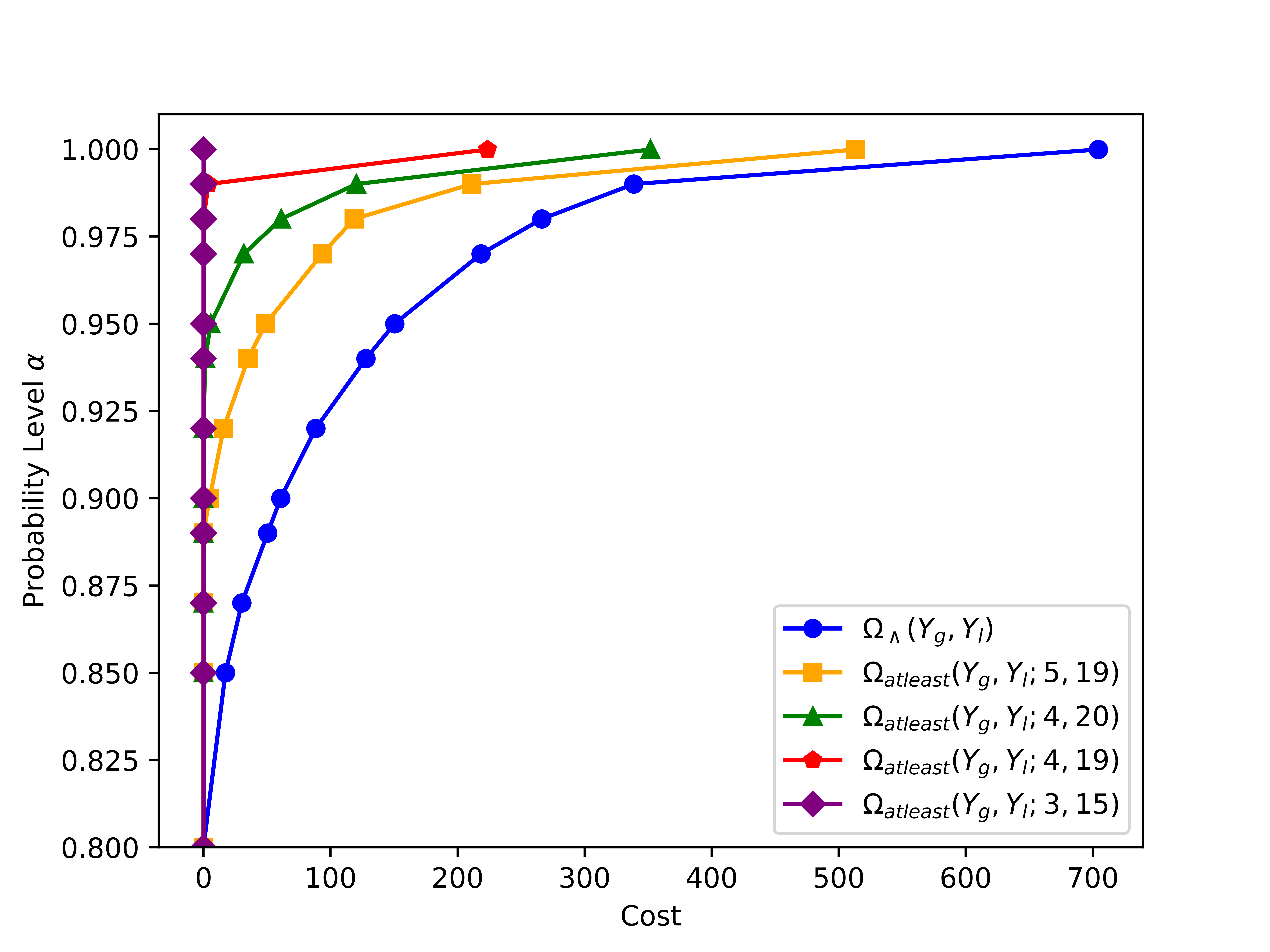}
	\centering
	\caption{Optimal Pareto curves using different event constraint logic $\Omega_\wedge(\cdot)$ and $\Omega_\text{atleast}(\cdot)$.}
	\label{fig:pareto}
\end{figure}

By adjusting the values of $Y_{g, \text{min}}$ and $Y_{l, \text{min}}$ to generate different logic configurations (i.e., events), varying the target probability $\alpha$, and solving the problem, we derive the optimal Pareto curves shown in Figure \ref{fig:pareto}.
Each curve illustrates the trade-off between design robustness, ensuring higher safety probability, and the associated cost. Essentially, achieving greater robustness comes at a higher cost.

As we would expect, the intersection logic $\Omega_\wedge(\cdot)$ incurs the greatest costs since it strictly enforces that every safety constraint be satisfied for each instance of $\xi$. Hence, we have to add more capacity to the design relative to the frontiers derived from $\Omega_\text{atleast}(\cdot)$ for a fixed probability level $\alpha$. We observe how decreasing the values of $Y_{g, \text{min}}$ and $Y_{l, \text{min}}$ decreases the costs in engineering sufficient capacity for feasible operation. For this application, this means that if only a subset of line/generator capacity constraints need to be respected for a particular demand profile $\hat{\xi}$, then we can embed that logic into our problem formulation and obtain a lower cost design relative to using traditional joint-chance constraint formulations. Hence, we can use application specific logic to avoid over-engineering, avoiding unnecessary costs.

\begin{figure}[!htb]
	\centering
    \includegraphics[width=0.7\textwidth]{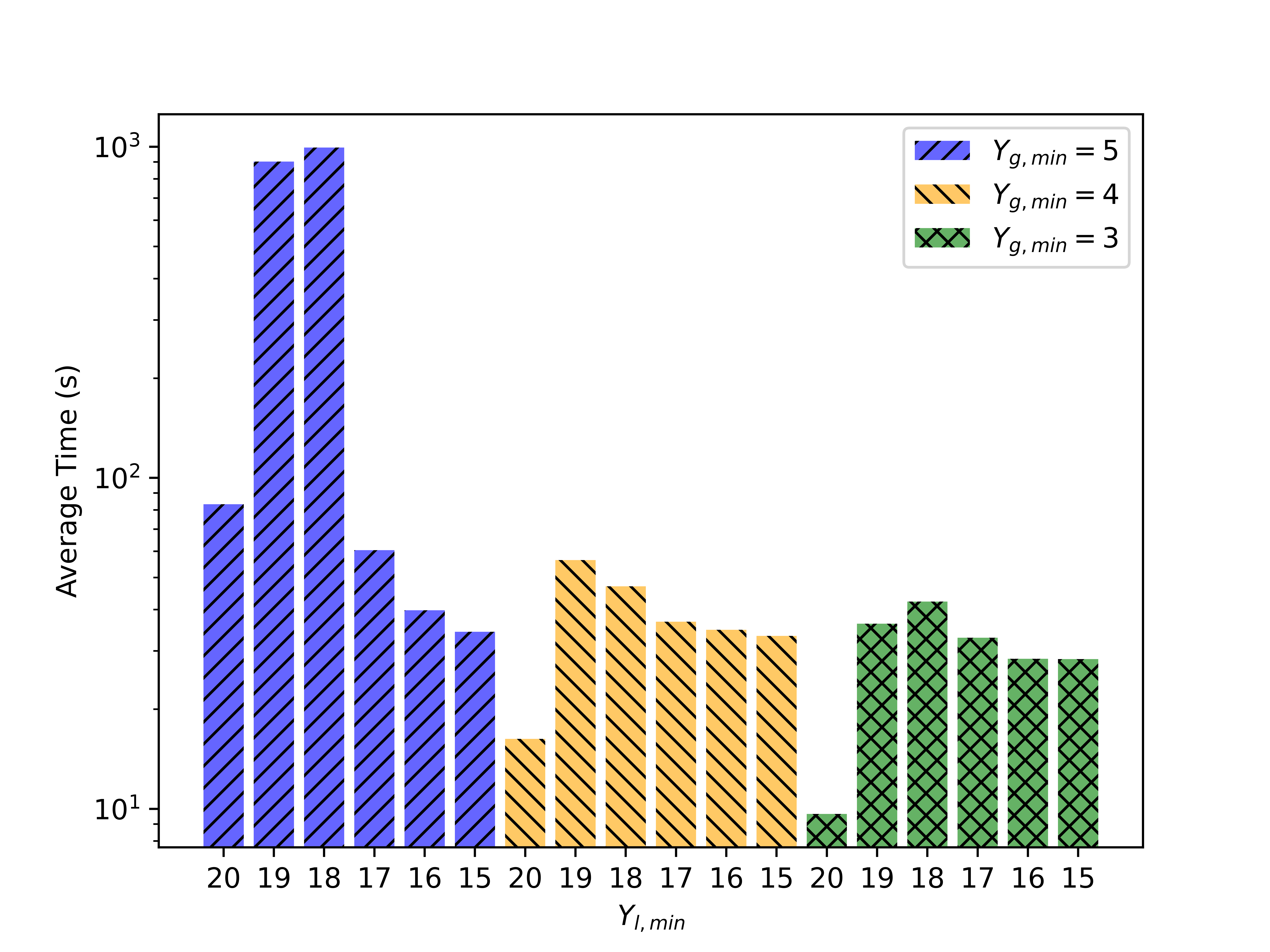}
    \caption{The mean computational time required to solve each Pareto pair for varied $Y_{g, \text{min}}$ and $Y_{l, \text{min}}$ using one-sided big-M reformulation method.}
    \label{fig:grid_time}
\end{figure}

Integrating arbitrary logic implies adding extra constraints that might impact the computational time to solve the model. Figure \ref{fig:grid_time} shows how the different event logic affects the solution time of the problem. In particular, we average over the time required to solve each Pareto pair when computing the Pareto frontier with a specified $Y_{g, \text{min}}$ and $Y_{l, \text{min}}$ using the traditional one-sided big-M reformulation method. Note that the event with $Y_{g, \text{min}} = 5$ and $Y_{l, \text{min}} = 20$ denotes traditional intersection logic $\Omega_\wedge(\cdot)$ that arises from joint-chance constraints. Interestingly, all but two instances (i.e., $Y_{g, \text{min}} = 5$ with $Y_{l, \text{min}} \in \{19, 18\}$) exhibit reduced computational times relative to the joint-chance baseline. 
This demonstrates that events with more complex logic aggregation do not inherently lead to higher computational costs; in some cases, they can actually reduce it. 
We chose the one-sided big-M reformulation as it proved to be the fastest method, but the trend of reduced computational cost with added logic was consistently observed across various GDP solution techniques.

 \begin{figure}[!htb]
	\centering
    \includegraphics[width=0.7\textwidth]{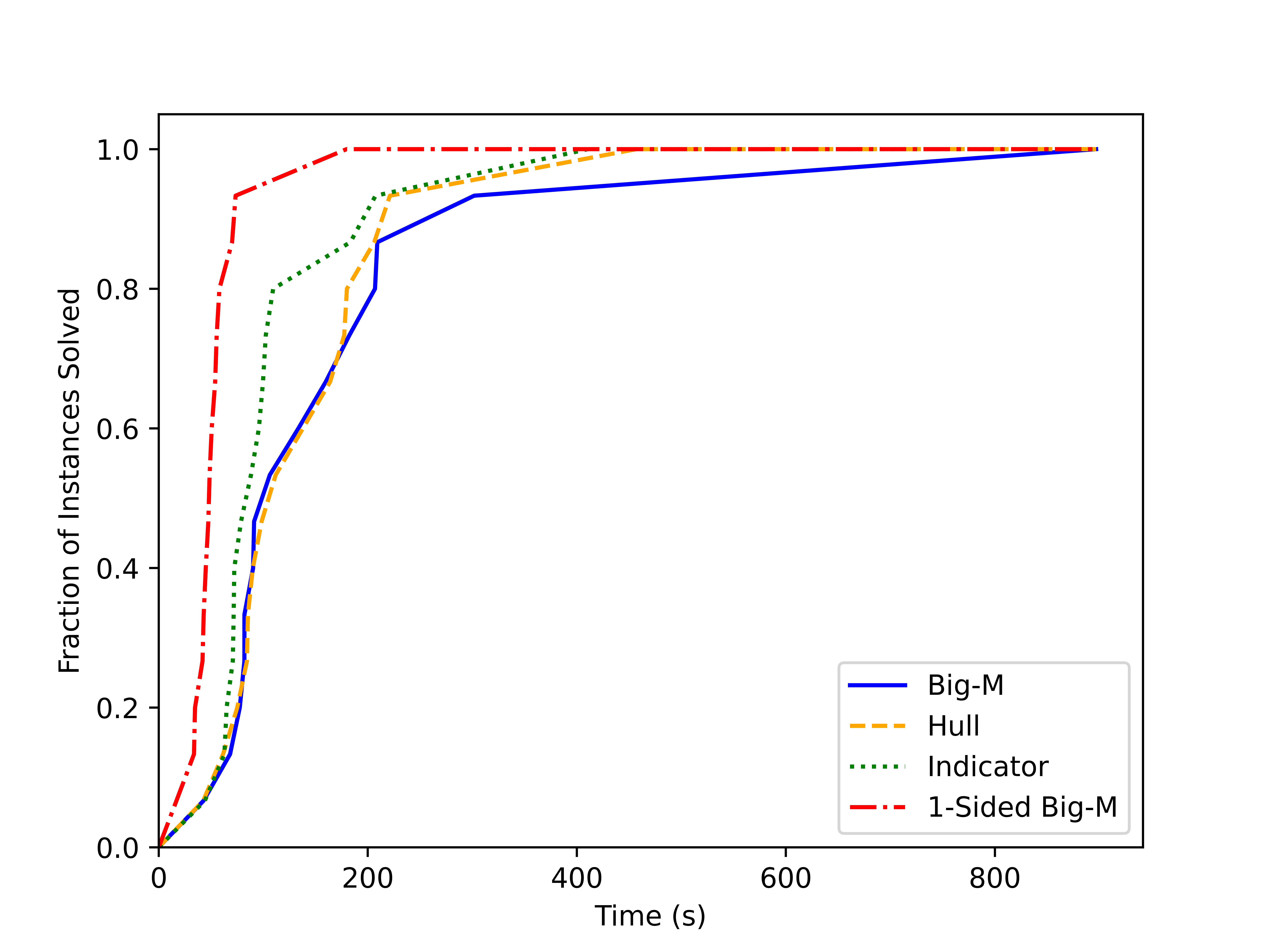}
    \caption{Performance of various GDP transformation methods in solving Pareto pairs with $\Omega_\text{atleast}(Y_g, Y_l ; 4, 19)$ logic.}
    \label{fig:grid_performance}
\end{figure}

Finally, we compare the performance of the different GDP transformation methods in this case study. In particular, we evaluate the GDP big-M (labeled as big-M), the hull reformulation, the indicator constraint reformulation, and the standard one-sided big-M solution methods.
In both big-M methods, tight values of $M$ were computed through interval arithmetic techniques \cite{trespalacios2015improved}. For illustrative purposes we show the event where at least four generator and 19 lines have to be within the safety threshold limit (i.e., $\Omega_\text{atleast}(Y_g, Y_l ; 4, 19)$). Figure \ref{fig:grid_performance} shows the performance plot indicating the fraction of Pareto pair instances that each method solved as a function of wall-time. 

In general, the traditional one-sided big-M approach consistently required the least time to solve each Pareto pair. This efficiency is likely due to the one-sided formulation having fewer constraints and variables, as the two-sided big-M introduces an additional constraint to handle the negation disjunction, while the hull reformulation requires more variables and constraints due to disaggregation. Both the traditional GDP methods and indicator constraint reformulation exhibited comparable performance. We highlight the $\Omega_\text{atleast}(Y_g, Y_l ; 4, 19)$ event for in Figure \ref{fig:grid_performance} to observe the potential advantages of the indicator constraint and hull reformulations over the two-sided big-M. However, across all evaluated events, the one-sided big-M consistently outperformed the others, despite the overall similarity in performance among these three methods.

\subsection{Optimal Disease Control}\label{sec:pandemic_case}

In this case study, we aim to optimally control quarantine measures to mitigate the spread of an infectious disease with minimal intervention. The objective of this model is to reduce isolation measures while ensuring the proportion of infected individuals remains below a specified threshold. This study highlights the application of event constraints in dynamic optimization, particularly within a deterministic optimal control framework, and offers a comparative analysis of different solution techniques based on their performance.

\subsubsection{Formulation}
The spread of the virus can be modeled through a given population using the susceptible-exposed-infected-recovered (SEIR) model \cite{ARON1984665}, which considers 4 population subsets that follow: 
\begin{center}
Susceptible → Exposed → Infectious → Recovered 
\end{center}
We define the fractional populations as follows: susceptible individuals $s(t): \mathcal{D}_t \rightarrow [0,1]$, exposed individuals who are not yet infectious $e(t): \mathcal{D}_t \rightarrow [0,1]$, infectious individuals $i(t): \mathcal{D}_t \rightarrow [0,1]$, and recovered individuals $r(t): \mathcal{D}_t \rightarrow [0,1]$, who are considered immune to future infection. The control variable $u(t) \in [0, 0.8]$ represents the enforced isolation measures, corresponding to levels of social distancing. These variables are normalized such that $s(t)+e(t)+i(t)+r(t)=1$. The deterministic SEIR model is formalized as follows:
\begin{equation}
    \begin{aligned}
        &&&&& \frac{ds(t)}{dt} = (u(t) - 1) \rho s(t)i(t), && t \in \mathcal{D}_t \\
        &&&&& \frac{de(t)}{dt} = (1 - u(t)) \rho s(t)i(t) - \zeta e(t), && t \in \mathcal{D}_t \\
        &&&&& \frac{di(t)}{dt} = \zeta e(t) - \eta i(t), && t \in \mathcal{D}_t \\
        &&&&& \frac{dr(t)}{dt} = \eta i(t), && t \in \mathcal{D}_t \\
        &&&&& s(0) = s_0, e(0) = e_0, i(0) = i_0, r(0) = r_0 \\
    \end{aligned}
    \label{eq:pandemic_SEIR}
\end{equation}
where $\rho$ is the infection rate, $\eta$ is the recovery rate, $\zeta$ is the incubation rates, and $s_0, e_0, i_0, r_0 \in \mathbb{R}$ denote the initial population fractions. The objective is to minimize the control interventions over the entire time horizon, defined as:
\begin{equation}
    \begin{aligned}
        &&\min_{u(t)} &&& \int_{t \in \mathcal{D}_t} u(t) dt. \\
    \end{aligned}
    \label{eq:pandemic_objective}
\end{equation}
Moreover, we enforce a path constraint that requires fraction of infected individuals $i(t)$ to remain below a limit $i_{max} = 0.02$:
\begin{equation}
    \begin{aligned}
        &&&&& i(t) \leq i_{max}, && t \in \mathcal{D}_t \\
    \end{aligned}
    \label{eq:pandemic_constraint}
\end{equation}
which loosely represents the capacity of the healthcare system to treat infected individuals. We solve the full formulation given by Equations \eqref{eq:pandemic_SEIR}-\eqref{eq:pandemic_constraint} solved via direct transcription using the parameters defined in Table \ref{tab:pandemic_parameters} with $\mathcal{D}_t = [0, 200]$ discretized over 101 equidistant time points.

\begin{table}[!htb]
    \centering
    \caption{Parameter values for the optimal disease control case study.}
    \label{tab:pandemic_parameters}
    \begin{tabular}{ccccccccc}
        \toprule
        $\rho$ & $\eta$ & $\zeta$ & $s_0$ & $e_0$ & $i_0$ & $r_0$ \\ 
        \midrule
        0.727 & 0.303 & 0.3 & $1-10^{-5}$ & $10^{-5}$ & 0 & 0 \\ 
        \bottomrule
    \end{tabular}
\end{table}

Constraint \eqref{eq:pandemic_constraint} presents a trade-off with objective function \ref{eq:pandemic_objective}, where the goal is to minimize intervention actions while ensuring the infected population remains within acceptable bounds. At one extreme where Constraint \eqref{eq:pandemic_constraint} is omitted, no control action is taken, and the fraction of infected individuals quickly peaks to $10$\% as shown in Figure \ref{fig:pandemic_unconstrained} in Appendix \ref{app:pandemic}. Interestingly, $u(t)$ remains under $u_{max}$ for $81.08$\% of the time horizon. In contrast, Figure \ref{fig:pandemic_constrained} in Appendix \ref{app:pandemic} illustrates the model with Constraint \eqref{eq:pandemic_constraint}, where considerable control inputs $u(t)$ are used to strictly keep $i(t) \leq i_{max}$ over $100$\% of the time horizon.

\subsubsection{Dynamic Event Constraint} \label{sec:pandemic_event}

We use an event constraint to precisely control the trade-off between Objective \eqref{eq:pandemic_objective} and Constraint \eqref{eq:pandemic_constraint}. This is given by embedding Constraint \eqref{eq:pandemic_constraint} into \eqref{eq:general_event_constr} to obtain:
\begin{equation}
    \mathbb{P}_t(i(t) \leq i_{max}) \geq \alpha
    \label{eq:pandemic_event_constraint}
\end{equation}
where $\alpha$ determines the minimum fraction of the time horizon in which the infection rate must remain the limit. The underlying expectation $\mathbb{E}_t$ is taken with constant weighting such that Constraint \eqref{eq:pandemic_event_constraint} becomes:
\begin{equation}
    \mathbb{E}_t\left[\mathbbm{1}_{i(t) \leq i_{max}}(t)\right] = \frac{1}{t_f}\int_0^{t_f}\mathbbm{1}_{i(t) \leq i_{max}}(t) dt \geq \alpha.
\end{equation}
However, we do have the modeling flexibility to choose a time-dependent weighting and/or restrict the portion of the time horizon we wish to relax (e.g., require that the constraint be strictly enforced for the first 25\% of the horizon). We also note that based on Figure \ref{fig:pandemic_unconstrained}, the minimum realizable fraction of time to enforce Constraint \eqref{eq:pandemic_constraint} is $81.08$\%; hence, setting $\alpha \leq 0.8108$ is equivalent to simply omitting Constraint \eqref{eq:pandemic_constraint} entirely.

We compare the various solution methods presented in Section \eqref{sec:formulations}. Initially, we apply a big-M transformation to obtain an exact reformulation of the problem, resulting in a nonconvex MINLP model. Subsequently, we solve the problem using the continuous approximation methods proposed in this work: the MPCC, CVaR, and SigVaR approximations.

\vspace{2mm}
\noindent \underline{Big-M Formulation}
\vspace{2mm}

\noindent The event constraint described by \eqref{eq:pandemic_event_constraint} can be reformulated using big-M as:
\begin{equation}
    \begin{aligned}
        &&&&& i(t)-i_{max} \leq M(1 - y(t)), && t \in \mathcal{D}_t \\
        &&&&& \mathbb{E}_t[y(t)] \geq \alpha &&   
    \end{aligned}
    \label{eq:pandemic_bigM}
\end{equation}
where $y(t) \in \{0, 1\}$ indicates whether Constraint \eqref{eq:pandemic_constraint} holds and we set $M=0.98$. Motivated by the performance observed in the previous case study, we employ a one-sided big-M transformation instead of the two-sided GDP reformulation. The resulting MINLP has 1,000 continuous variables, 111 binary variables, and 997 constraints.

\begin{figure}[!htb]
     \centering
     \begin{subfigure}[b]{0.45\textwidth}
         \centering
         \includegraphics[width=1\textwidth]{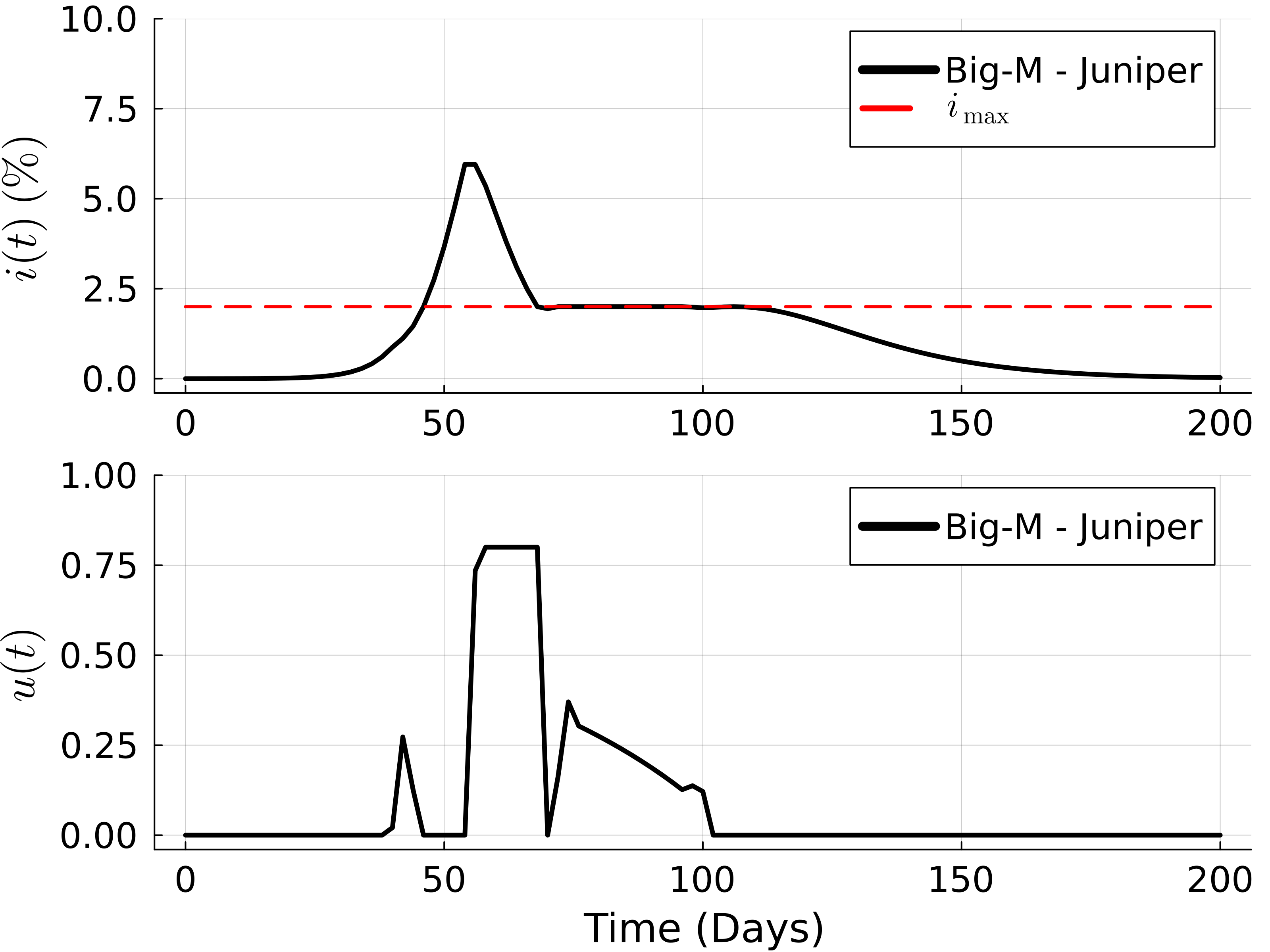}
         \caption{Big-M: $\alpha = 0.9$}
         \label{fig:pandemic_bigM_0.9}
     \end{subfigure}
          \hspace{1cm}
     \begin{subfigure}[b]{0.45\textwidth}
         \centering
         \includegraphics[width=1\textwidth]{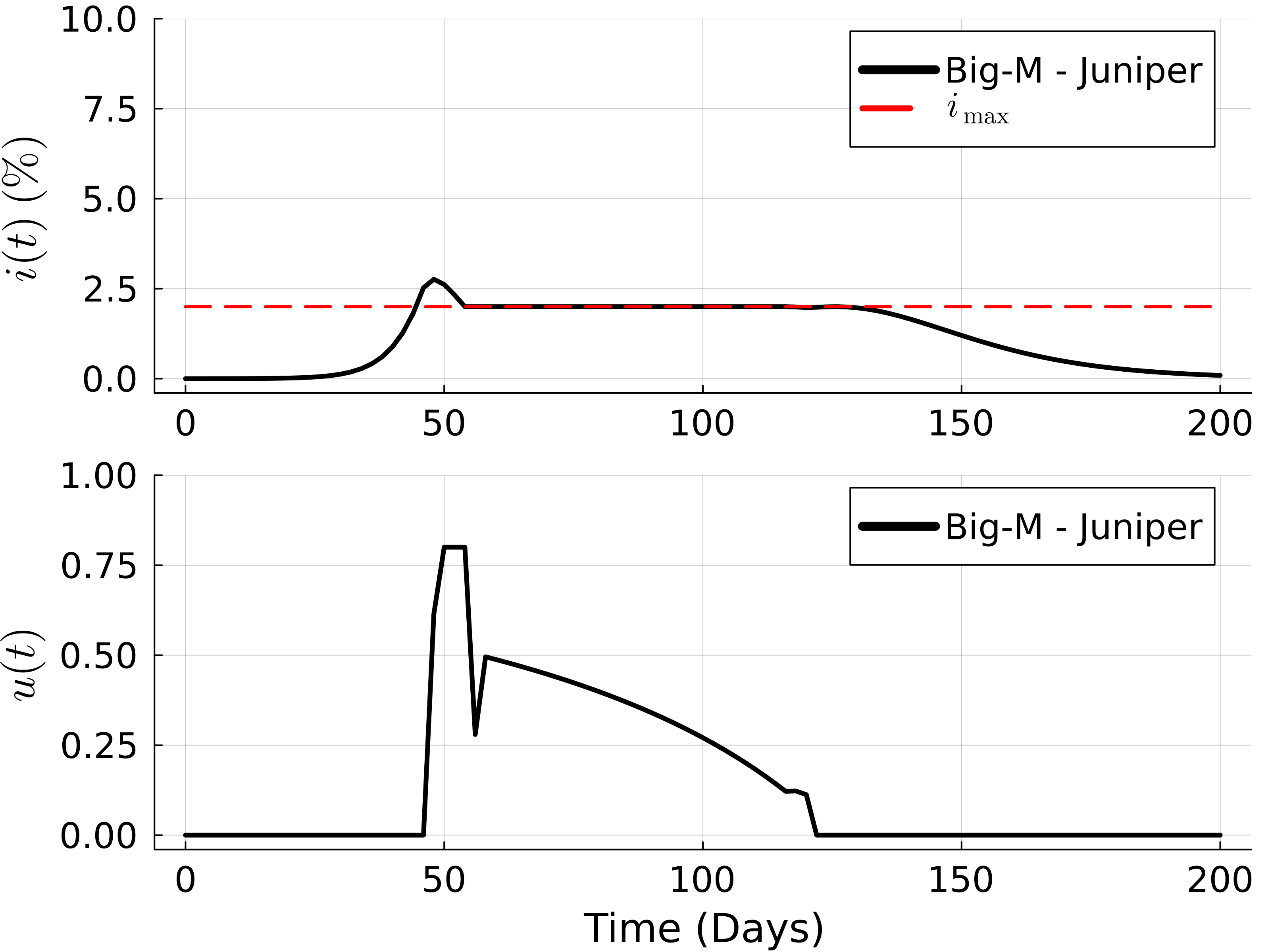}
         \caption{Big-M: $\alpha = 0.96$}
         \label{fig:pandemic_bigM_0.96}
     \end{subfigure}
    \caption{Optimal control policy and infected population fraction for different $\alpha$ values using the big-M reformulation.}
    \label{fig:pandemic_bigM}
\end{figure}

Figure \ref{fig:pandemic_bigM} shows the optimal $i(t)$ and $u(t)$ trajectories obtained using the heuristic MINLP solver Juniper \cite{juniper} with $\alpha \in \{0.9, 0.96\}$. Similar plots for other values of $\alpha$ are provided in Figure \ref{fig:pandemic_bigM_app} in the Appendix \ref{app:pandemic}. Note that the global optimizers SCIP and BARON were tested, but both failed to produce a feasible solution for the optimization problem. It is clearly evident how $\alpha$ controls the extent to which the infection limit is violated (precisely a $1-\alpha$ fraction of the time horizon). Increasing $\alpha$ decreases the peak amount of infected individuals, but this comes at the cost of implementing more stringent interventions which increases the objective value.

While this solution is based on an exact big-M reformulation, it greatly increases the computational cost to an average of 2.3 hours in contrast to solving the original optimal control problem with a hard constraint which only took a few seconds with \textsc{Ipopt} on average. This motivates the use of continuous approximations to alleviate the added computational burden imposed by the event constraint.

\vspace{2mm}
\noindent \underline{Continuous Approximation via MPCC}
\vspace{2mm}

\noindent The MPCC formulation of the event constraint can be expressed similarly to \eqref{eq:pandemic_bigM}. However, in this case, the formulation involves continuous variables $y^0(t), y^1(t)$, which are linked to the system via complementarity conditions:
\begin{equation}
\begin{aligned}
    &&&&& i(t)-i_{max} \leq M(1 - y^1(t)), && t \in \mathcal{D}_t \\
    &&&&& 0 \leq y^0(t) \perp y^1(t) \geq 0, &&  t \in \mathcal{D}_t\\
    &&&&&  y^0(t) + y^1(t) = 1, && t \in \mathcal{D}_t\\
    &&&&& \mathbb{E}_t[y(t)] \geq \alpha &&   \\
    &&&&&  y^0(t), y^1(t) \in [0,1], &&  t \in \mathcal{D}_t
\end{aligned}
\label{eq:pandemic_mpcc}
\end{equation}
This formulation is solved via \textsc{Ipopt} and \textsc{Conopt4} using the smooth-max approximation with progressively smaller values of $\epsilon$ as detailed in Table \ref{tab:tolerances} in Appendix \ref{app:pandemic}.
Each of the 40 NLP problems solved has 1,218 variables and 1,441 constraints.
The resulting profiles obtained using the MPCC approximation for $\alpha \in \{0.9, 0.96\}$ are presented in Figure \ref{fig:pandemic_MPCC} and are compared against optimal trajectories obtained by enforcing Constraint \eqref{eq:pandemic_constraint} which is the most conservative and costly scenario. Similar results for a wider collection of $\alpha$ values are detailed in Figure \ref{fig:pandemic_MPCC_app} in the Appendix \ref{app:pandemic}.

\begin{figure}[!htb]
     \centering
     \begin{subfigure}[b]{0.45\textwidth}
         \centering
         \includegraphics[width=1\textwidth]{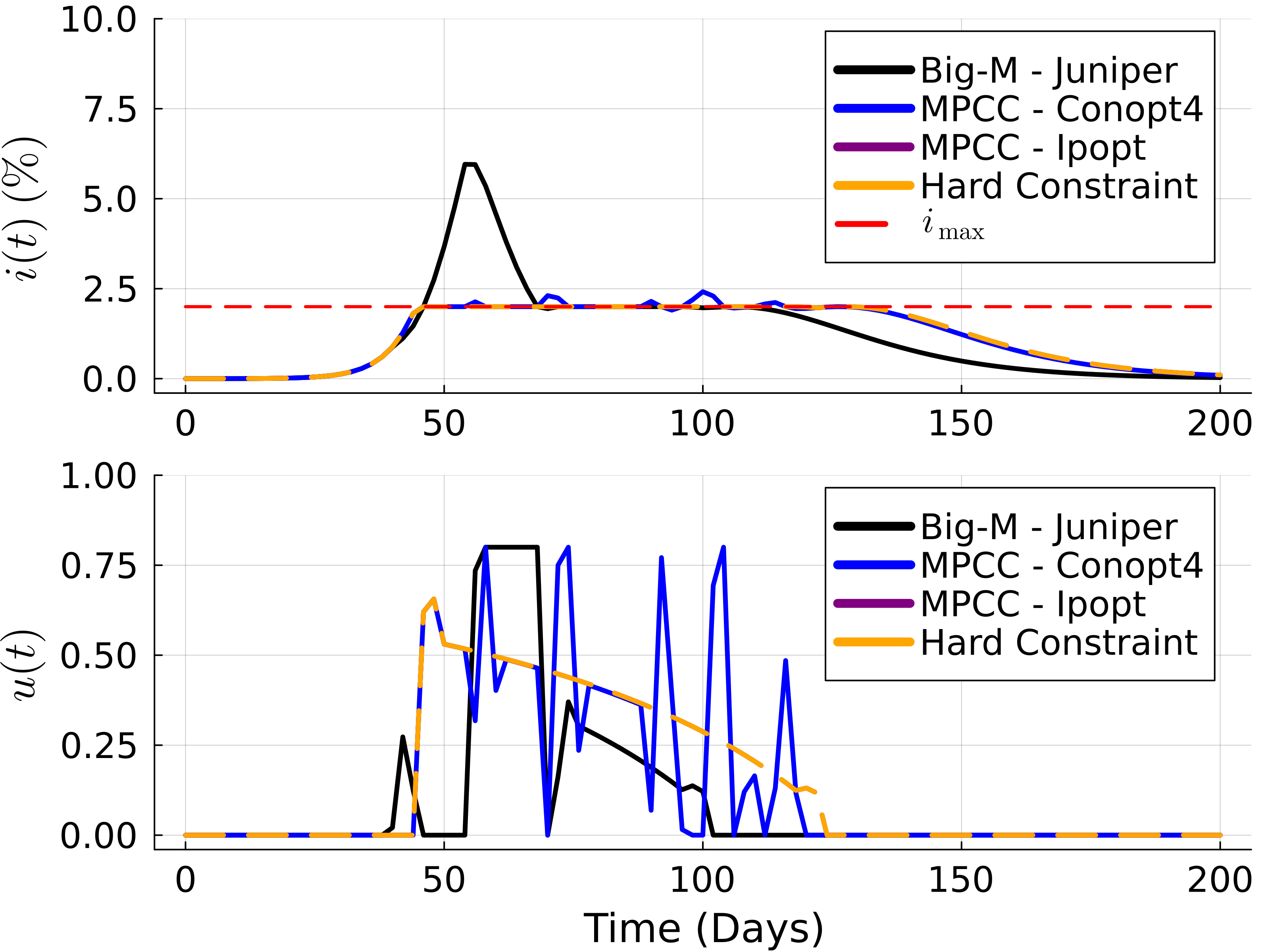}
         \caption{MPCC: $\alpha = 0.9$}
         \label{fig:pandemic_MPCC_0.9}
     \end{subfigure}
          \hspace{1cm}
     \begin{subfigure}[b]{0.45\textwidth}
         \centering
         \includegraphics[width=1\textwidth]{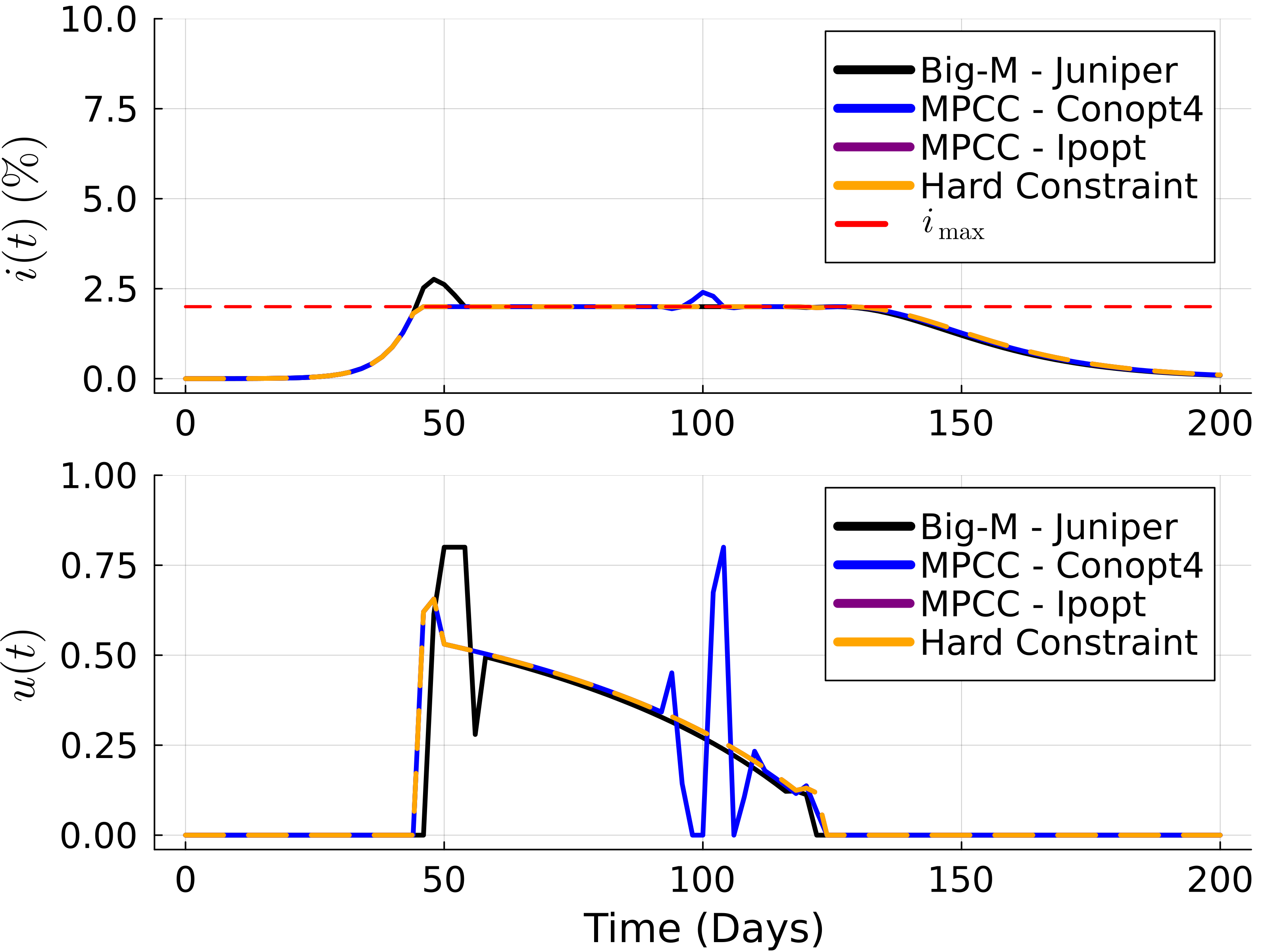}
         \caption{MPCC: $\alpha = 0.96$}
         \label{fig:pandemic_MPCC_0.96}
     \end{subfigure}
    \caption{Optimal control policy and infected population fraction for different $\alpha$ using the MPCC approximation.}
    \label{fig:pandemic_MPCC}
\end{figure}

Figure \ref{fig:pandemic_MPCC} illustrates that the solution depends on the solver used. With \textsc{Ipopt}, the solution remains consistent regardless of the value of $\alpha$, converging to the hard constraint solution. Hence, this solution fails to exploit the flexibility introduced by the event constraint and results in a higher objective function compared to the big-M solution. In contrast, \textsc{Conopt4} converges to a local minimum that does relax Constraint \eqref{eq:pandemic_constraint} in proportion to $\alpha$. Although this solution differs from the big-M solution and remains suboptimal, it demonstrates that the MPCC approximation is capable of providing approximate solutions the do relax the constraint contained in the event constraint.

\vspace{2mm}
\noindent \underline{Continuous Approximation via CVaR}
\vspace{2mm}

\noindent The CVaR approximation provides another continuous approximation, offering the tightest convex approximation to the indicator function inherent in Constraint \eqref{eq:pandemic_event_constraint}. To apply this approach, Constraint \eqref{eq:pandemic_event_constraint} becomes:
\begin{equation}
    \begin{aligned}
    &&&&& \phi(t) \geq i(t)-i_{max} - \lambda , && t \in \mathcal{D}_t \\
    &&&&& \mathbb{E}_t[\phi(t)] \leq \lambda(1-\alpha)
    \end{aligned}
    \label{eq:pandemic_cvar}
\end{equation}
where $\phi(t) \in \mathbb{R}_+$ and $\lambda \in \mathbb{R}$ are continuous variables, overall resulting in a formulation with 1,108 variables and 997 constraints. 

\begin{figure}[!htb]
     \centering
     \begin{subfigure}[b]{0.45\textwidth}
         \centering
         \includegraphics[width=1\textwidth]{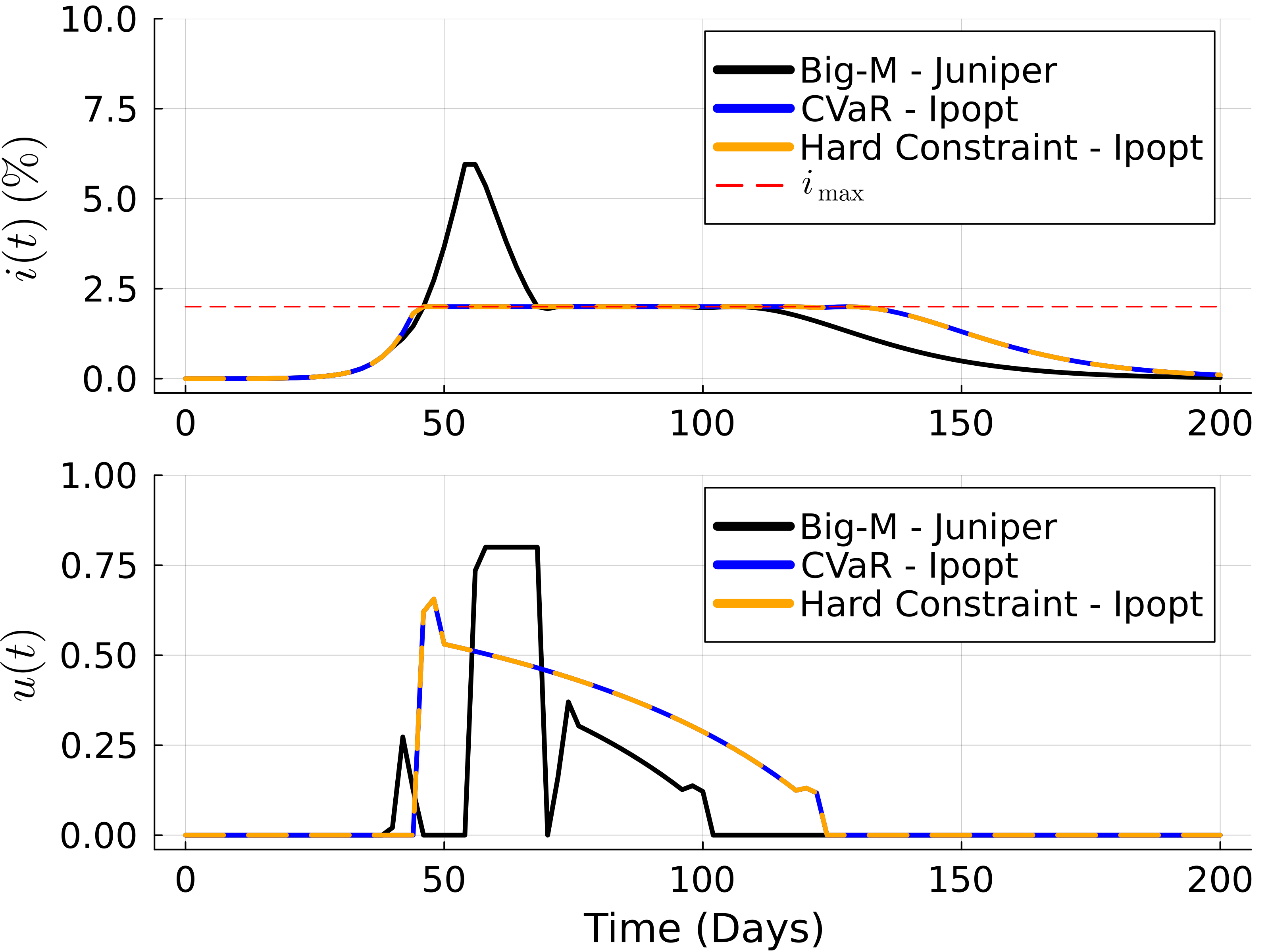}
         \caption{CVaR: $\alpha = 0.90$}
         \label{fig:pandemic_CVaR_0.90}
     \end{subfigure}
     \hspace{1cm}
     \begin{subfigure}[b]{0.45\textwidth}
         \centering
         \includegraphics[width=1\textwidth]{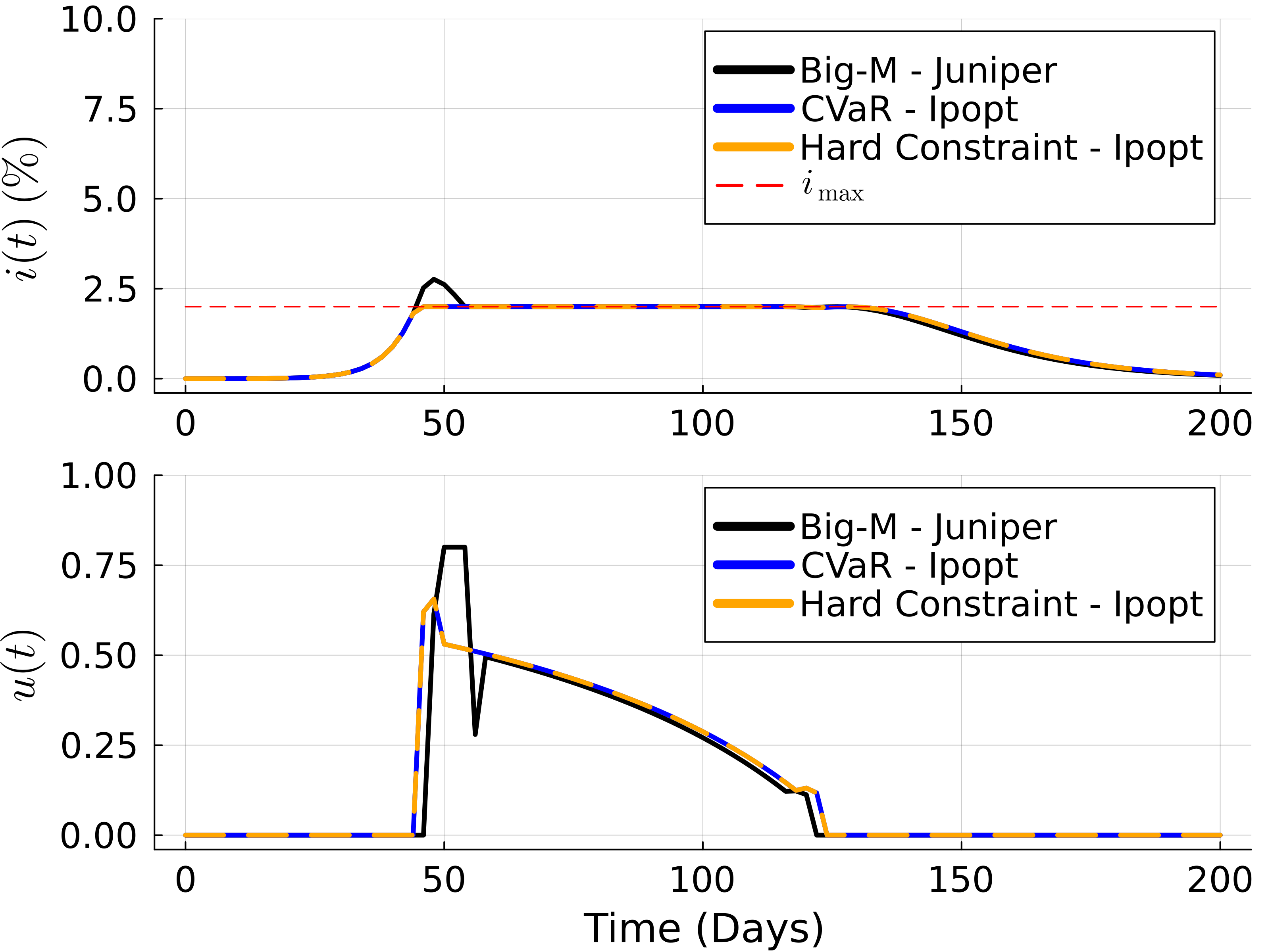}
         \caption{CVaR: $\alpha = 0.96$}
         \label{fig:pandemic_CVaR_0.96}
     \end{subfigure}
    \caption{Optimal control policy and infected population fraction for different $\alpha$ using the CVaR approximation.}
    \label{fig:pandemic_CVaR}
\end{figure}

Figure \ref{fig:pandemic_CVaR} presents the optimal profiles for $i(t)$ and $u(t)$ with $\alpha\in\{0.9, 0.96\}$ in comparison to the conservative solution obtained by enforcing Constraint \eqref{eq:pandemic_constraint} as a hard constraint. As expected, the CVaR approximation is overly conservative, yielding the same solution as the hard constrained case for all values of $\alpha$ tested. Notably, \textsc{Ipopt} and \textsc{Conopt4} yielded the same results.

\FloatBarrier

\vspace{2mm}
\noindent \underline{Continuous Approximation via SigVaR}
\vspace{2mm}

\noindent The SigVaR approximation provides a continuous conservative approximation that is often much tighter than CVaR since it uses a sigmoidal function to approximate the indicator in \eqref{eq:conservative_indicator}. For this problem, Constraint \eqref{eq:pandemic_event_constraint} becomes:
\begin{equation}
\begin{aligned}
    &&&&& \phi(t) \geq 2 \dfrac{1+\beta}{\beta + \exp(-\gamma(i(t)-i_{max}))}  - 1, && t \in \mathcal{D}_t \\
    &&&&& \mathbb{E}_t[\phi(t)] \leq 1-\alpha
\end{aligned}
\label{eq:pandemic_sigvar}
\end{equation}
where $\phi(t) \in \mathbb{R}_+$, $\lambda \in \mathbb{R}$, and initial values for $\beta$ and $\gamma$ are set to $1.55$ and $63.76$, respectively \cite{cao2020sigmoidal}. For each iteration of the SigVaR, an NLP consisting of 1,106 variables and 996 constraints is solved.

\begin{figure}[!htb]
     \centering
     \begin{subfigure}[b]{0.45\textwidth}
         \centering
         \includegraphics[width=1\textwidth]{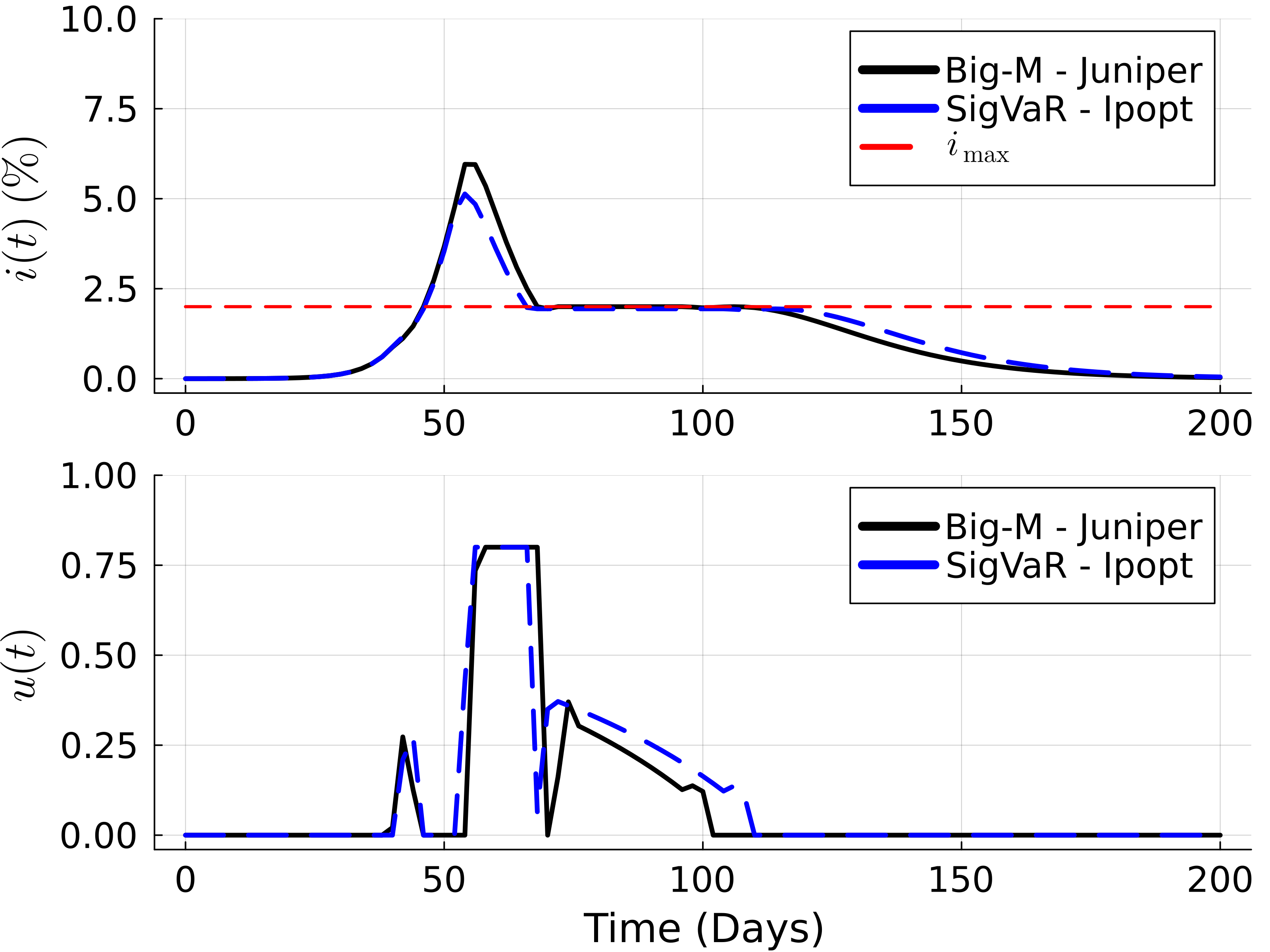}
         \caption{SigVaR: $\alpha = 0.90$}
         \label{fig:pandemic_SigVaR_0.90}
     \end{subfigure}
     \hspace{1cm}
     \begin{subfigure}[b]{0.45\textwidth}
         \centering
         \includegraphics[width=1\textwidth]{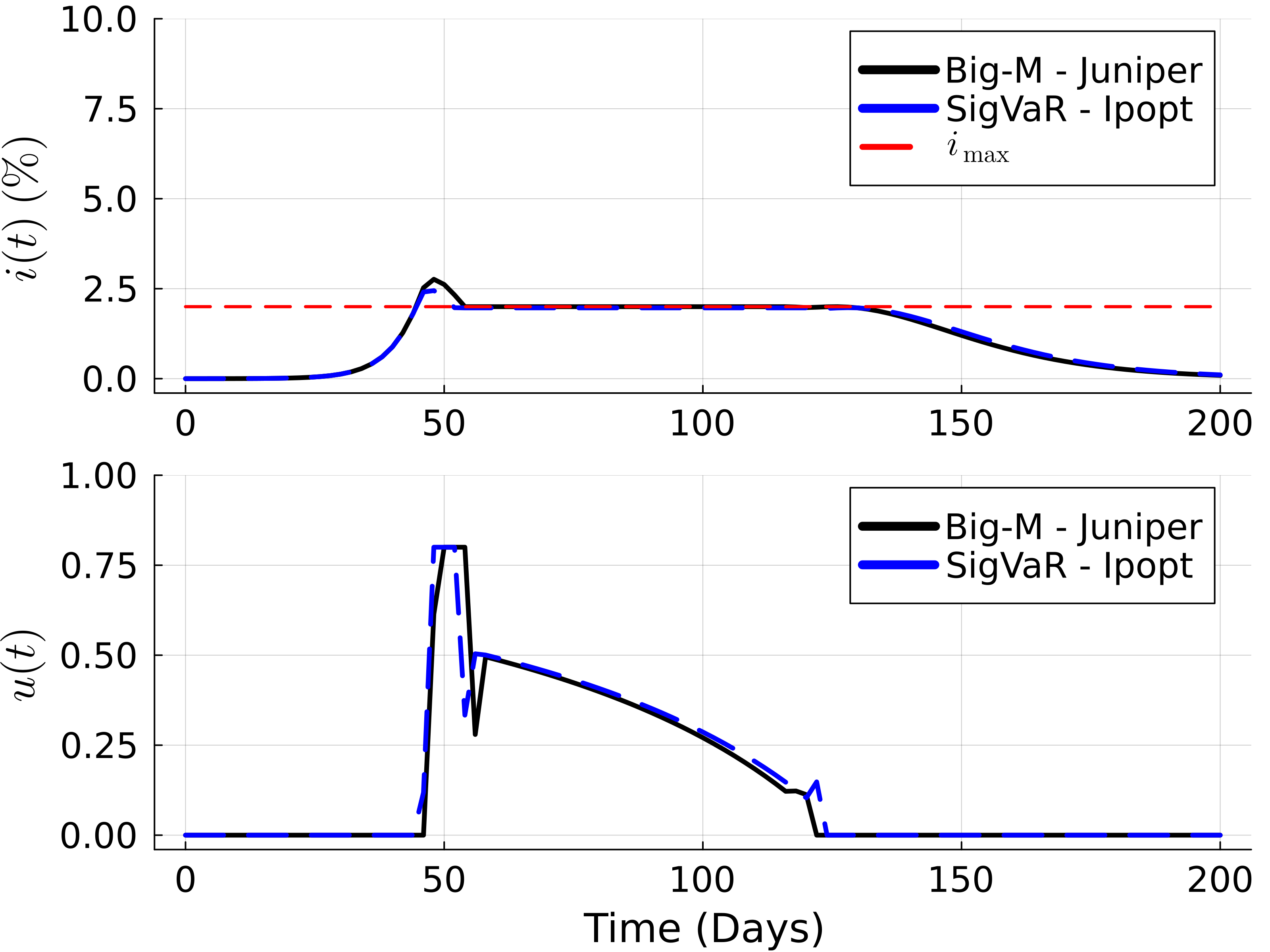}
         \caption{SigVaR: $\alpha = 0.96$}
         \label{fig:pandemic_SigVaR_0.96}
     \end{subfigure}
    \caption{Optimal control policy and infected population fraction for different $\alpha$ using the SigVaR approximation.}
    \label{fig:pandemic_SigVaR}
\end{figure}

Figure \ref{fig:pandemic_SigVaR} displays the optimal trajectories obtained using the final iteration of the SigVaR approximation for $\alpha \in \{0.9, 0.96\}$, using \textsc{Ipopt} as the solver. In both instances, the SigVaR approximation yields a close approximation relative to the big-M solution. Hence, the SigVaR method is able to provide higher quality approximations that the CVaR and MPCC approximations.

\FloatBarrier

\vspace{2mm}
\noindent \underline{Solution Method Comparison}
\vspace{2mm}

\begin{table}[!htb]
    \centering
    \caption{Solution times and optimal objective values obtained with each of the proposed solution methods for the optimal disease control problem over varied $\alpha$ values. Note that the CVaR and SigVaR solutions are obtained with \textsc{Ipopt} and the MPCC solutions are obtained via \textsc{Conopt4}.}
    \label{tab:pandemic_method_comparison}
    \begin{tabular}{c|cc|cc|cc|cc}
    \toprule
    & \multicolumn{2}{c|}{\textbf{Big-M}} & \multicolumn{2}{c|}{\textbf{MPCC}} & \multicolumn{2}{c|}{\textbf{CVaR}} & \multicolumn{2}{c}{\textbf{SigVaR}} \\ 
    \cmidrule{2-9}
    $\alpha$ & Time [s] & Objective & Time [s] & Objective & Time [s] & Objective & Time [s] & Objective \\ \midrule
    $0.85$   & $3,584$ & $10.51$ &  $3.43$  &  $28.27$  & $0.13$  & $28.81$ & $4.98$ & $11.19$   \\ 
    $0.90$   & $6,509$ & $18.35$    & $2.43$ &  $28.31$ & $0.12$  & $28.81$ & $8.63$ & $21.58$ \\
    $0.95$   & $22,338$ & $26.75$    & $2.37$  & $28.59$  & $0.13$  & $28.81$ & $3.12$ & $28.06$ \\
    $0.96$   & $11,942$ & $27.55$    & $2.30$  &  $28.60$ & $0.13$  & $28.81$ & $2.46$ & $28.70$  \\ 
    $0.97$   & $6,132$ & $28.22$    &  $2.31$ &  $28.79$ & $0.13$  & $28.81$ & $5.38$ & $29.33$  \\ 
    $0.99$   & $156.53$  & $28.74$ & $1.90$  & $28.81$  & $0.11$  & $28.81$ & $1.90$ & $29.88$  \\ 
    \bottomrule
    \end{tabular}
\end{table}

\noindent Table \ref{tab:pandemic_method_comparison} provides a summary of the optimal objective values provided and computational times incurred by each solution method. In all cases, the big-M approach provides the best objective values, but requires $17,610 s$ to solve on average. The CVaR approximation substantially reduces the computationally cost by solving a single continuous NLP, but yields a highly conservative solution that effectively enforces Constraint \eqref{eq:pandemic_constraint} over the entire horizon. Both the MPCC and SigVaR approximations require solving multiple NLP problems, leading to a greater computational cost relative to using CVaR. Interestingly, MPCC converges in less time than SigVaR even though it uses a greater number of iterations. However, the SigVaR approximation stands out as the only continuous approximation that is able to provide objectives that follow the behavior of the big-M solution, while incurring a computational cost that is orders-of-magnitude smaller.

\begin{figure}[!htb]
     \centering
     \begin{subfigure}[b]{0.45\textwidth}
         \centering
         \includegraphics[width=1\textwidth]{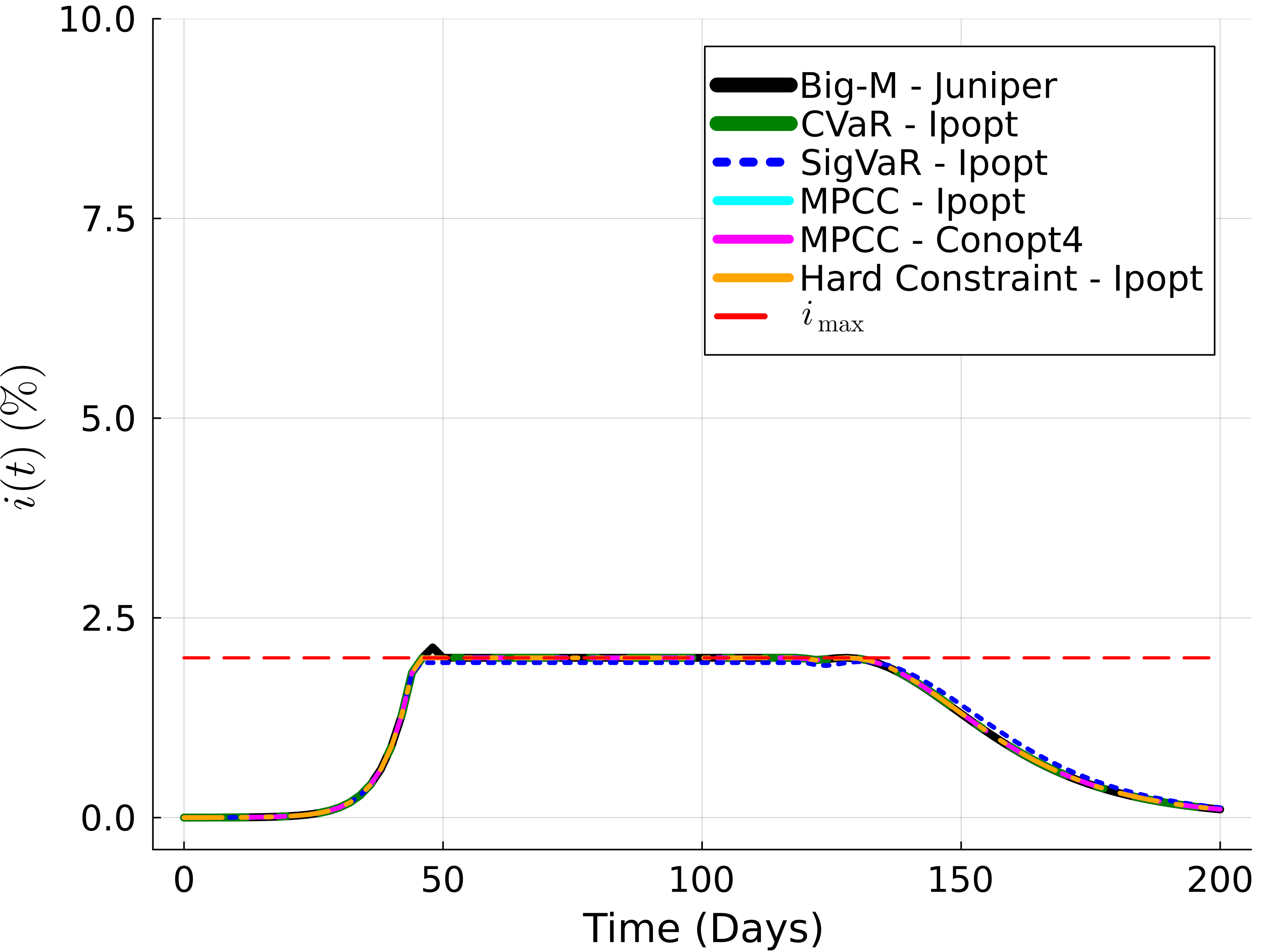}
         \caption{Optimal $i(t)$ profile}
         \label{fig:pandemic_method_comparison_i}
     \end{subfigure}
     \hspace{1cm}
     \begin{subfigure}[b]{0.45\textwidth}
         \centering
         \includegraphics[width=1\textwidth]{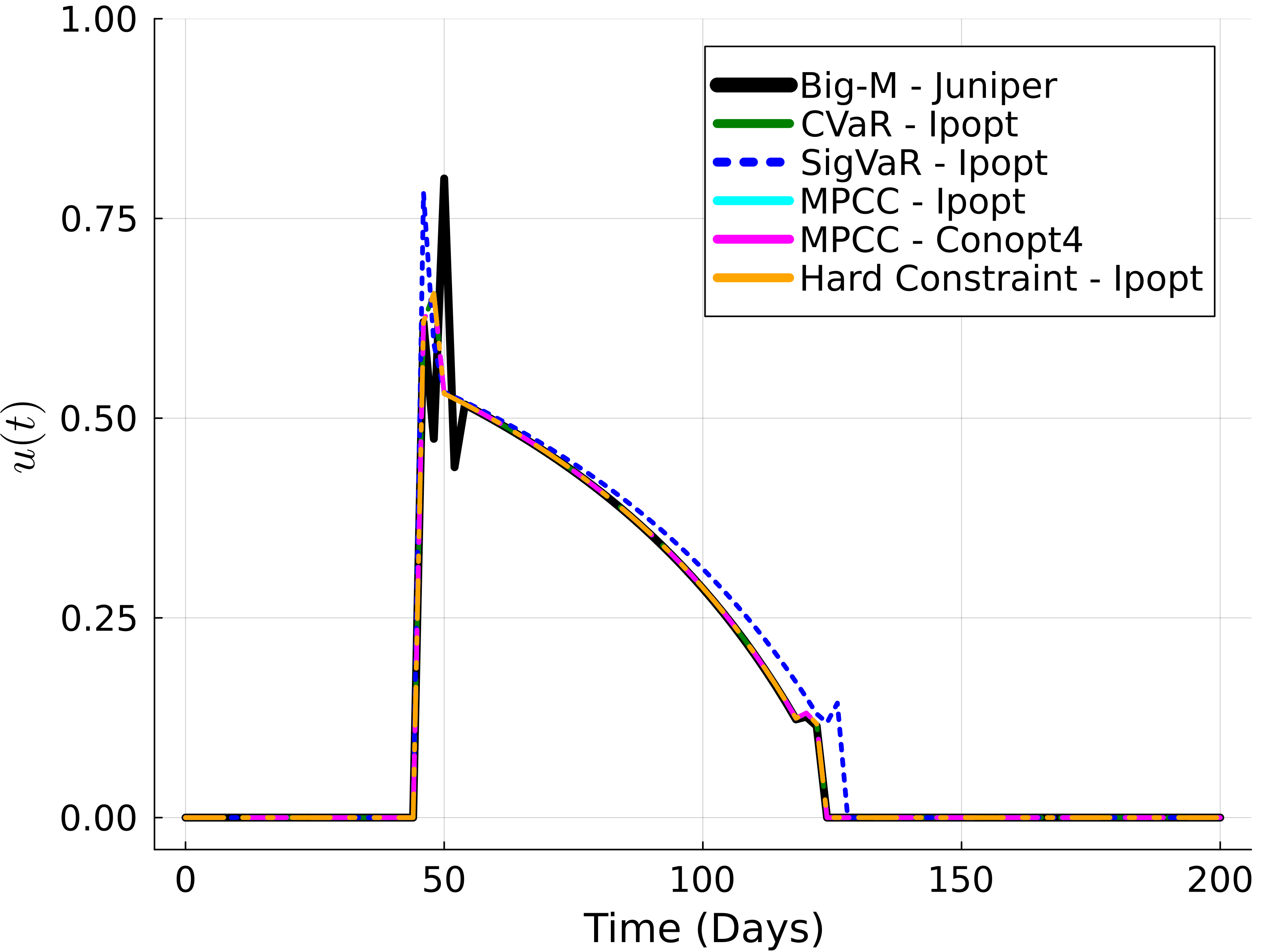}
         \caption{Optimal $u(t)$ profile}
         \label{fig:pandemic_method_comparison_u}
     \end{subfigure}
    \caption{Comparison of the optimal control policy and infected population fraction obtained by different solution methods for $\alpha$=0.85.}
    \label{fig:pandemic_method_comparison}
\end{figure}

Figure \ref{fig:pandemic_method_comparison} provides the optimal trajectories obtained using all the solutions methods with $\alpha=0.85$. Notably, all of these correspond to local optima which is why the MPCC trajectories vary with the choice of solver. It is readily evident that the SigVaR trajectories closely closely approximate those obtained using big-M profiles, making it a promising approach for approximating the solution of event constrained problems at a significantly reduced computational cost relative to big-M.

A detailed analysis of the approximation errors for each solution method (evaluating their performance in approximating both the optimal profiles and the indicator function relative to the big-M solution) is provided in Appendix \ref{app:pandemic_error}. Additionally, the appendix provides insights into how the parameters and quality of the SigVaR approximation evolve with the number of iterations.

\subsection{2D Temperature Control of a Heated Plate} \label{sec:pde_case}
In this case study, we explore the control of a 2D diffusion-based thermal field described by a system of PDEs, similar to the optimal control problem presented in \cite{LU2021146}. The goal is to minimize temperature deviations across a metal plate from a specified setpoint, ensuring no point on the plate exceeds a maximum allowable temperature. The temperature field is influenced by a uniform grid of point heaters, simulating the behavior of a solid metal slab under thermal control \cite{LU2021146}. This study highlights how an event constraint can be applied to spatial constraints in PDE-constrained optimization, and it provides a comparative analysis of the solution techniques proposed in this work.

\subsubsection{Formulation}
We model the two-dimensional heat transfer through a metal plate using the PDE system:
\begin{equation}
    \begin{aligned}
        &&&&& D\left(\frac{\partial^2 T(x)}{\partial x_{1}^2} + \frac{\partial ^2 T(x)}{\partial x_{2}^2}\right) + \sqrt{u(x)} =  q_{loss}, && x \in \mathcal{D}_{x} \\
        &&&&& T[-1, x_{2}] = 0, && x \in \mathcal{D}_x \\
        &&&&& T[1, x_{2}] = 0, && x \in \mathcal{D}_x \\
        &&&&& T[x_{1}, -1] = 0, && x \in \mathcal{D}_x \\
        &&&&& T[x_{1}, 1] = 0, && x \in \mathcal{D}_x \\
    \end{aligned}
    \label{eq:2D_diffusion_ODE}
\end{equation}
where $D=0.05$ is the diffusion coefficient, $q_{loss} = 0.1$ is the unit heat loss rate, and$\mathcal{D}_x = [-1, 1]^2$ is the spatial domain. Here, $u(x)$ denotes the heater inputs and can vary between $0$ and $2500$ at each of the 36 equally spaced heaters, but is constrained to be zero elsewhere. The objective is to minimize the temperature field deviation from a constant setpoint $T_{sp} = 1$:
\begin{equation}
    \min_{T(x)} \quad \int_{x \in \mathcal{D}_x} (T(x)-T_{sp})^2 dx.
    \label{eq:2D_diffusion_objective}
\end{equation}
Moreover, the temperature over the plate is to be kept under $T_{max} = 1.1$:
\begin{equation}
    T(x) \leq T_{max}, \quad x \in \mathcal{D}_{x}
    \label{eq:2D_diffusion_constraint}
\end{equation}
which we will seek to relax via an event constraint in an effort to improve the objective value. The resulting optimal control formulation is solved over a 62x62 grid of uniformly space points leveraging central finite difference.

\subsubsection{Spatial Event Constraint}
We seek to relax Constraint \eqref{eq:2D_diffusion_constraint} via the event constraint:
\begin{equation}
    \mathbb{P}_x(T(x) \leq T_{max}) \geq \alpha
    \label{eq:2D_diffusion_event}
\end{equation}
where we evaluate the underlying expectation with $p(x) = \frac{1}{4}$ (the total plate area described by $\mathcal{D}_x$) such that $\alpha$ represents the fractional plate area for which Constraint \eqref{eq:2D_diffusion_constraint} is enforced. In integral form this becomes:
\begin{equation}
    \mathbb{E}_x\left[\mathbbm{1}_{T(x) \leq T_{max}}(x)\right] = \frac{1}{4}\int_{x \in \mathcal{D}_x}\mathbbm{1}_{T(x) \leq T_{max}}(x) dx \geq \alpha.
\end{equation}
Below, we apply big-M as an exact reformulation which produces a nonconvex MINLP, and compare it against the proposed continuous approximation methods.

\vspace{2mm}
\noindent \underline{Big-M Approximation}
\vspace{2mm}

\noindent The big-M reformulation of the event constraint is:
\begin{equation}
    \begin{aligned}
        &&&&& T(x)-T_{max} \leq M(1 - y(x)), && x \in \mathcal{D}_x \\
        &&&&& \mathbb{E}_x[y(x)] \geq \alpha &&  \\
    \end{aligned}
    \label{eq:2D_diffusion_bigM}
\end{equation}
where $y(x) \in \{0, 1\}$ is the binary variable that indicates constraint violation and $M$ is a sufficiently large upper bounding constant. The resulting MINLP formulation has 19,221 variables (3,844 binary) and 15,378 constraints. BARON is run until a relative gap of at least 0.5\% is achieved since most of the instances are unable to converge with a tighter tolerance within a 15-hour wall-time.

\begin{figure}[!htb]
     \centering
     \begin{subfigure}[b]{0.4\textwidth}
         \centering
         \includegraphics[width=1\textwidth]{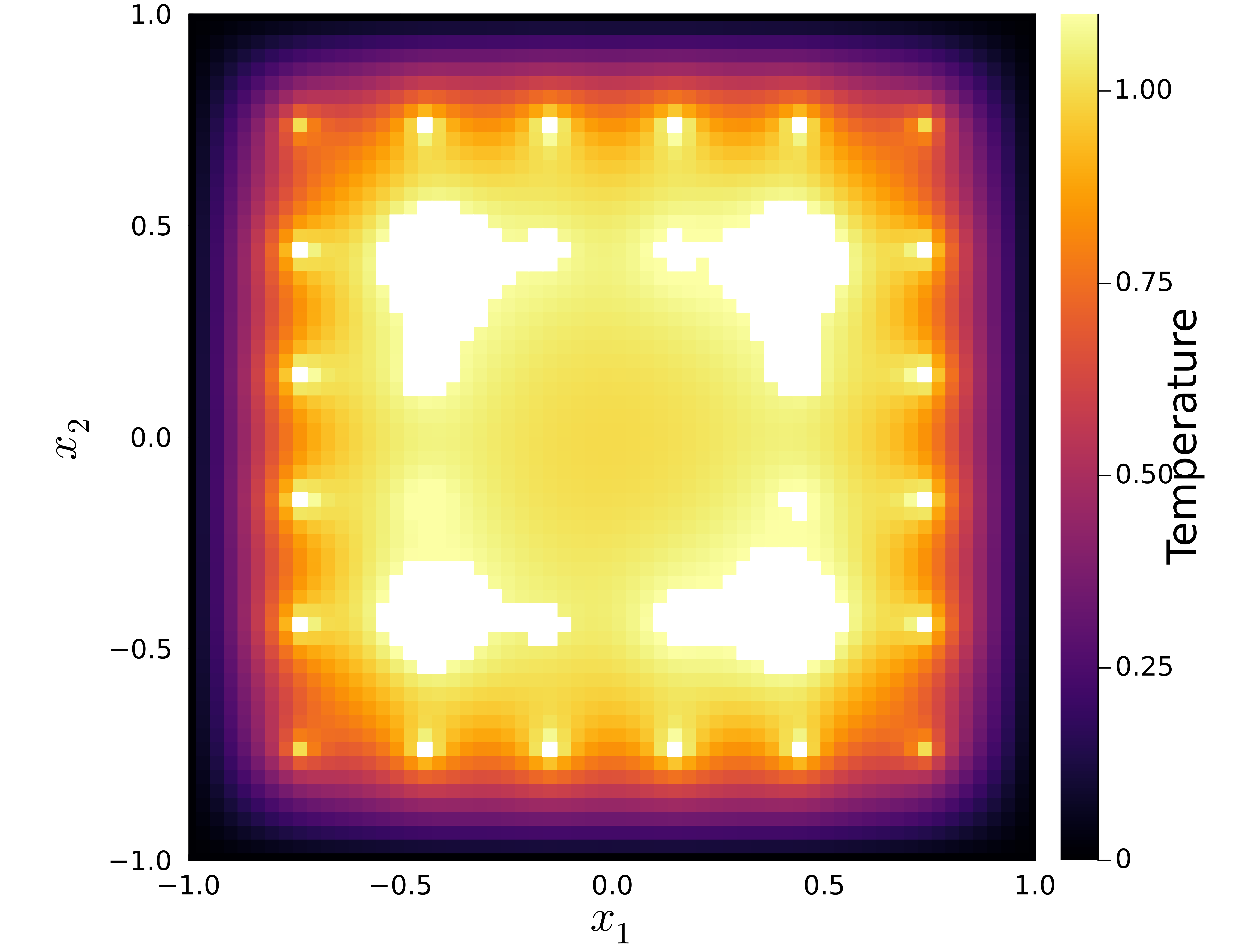}
         \caption{$T(x)$: $\alpha = 0.9$}
         \label{fig:diffusion_NL_0.9_bigM_baron}
     \end{subfigure}
          \hspace{1cm}
     \begin{subfigure}[b]{0.4\textwidth}
         \centering
         \includegraphics[width=1\textwidth]{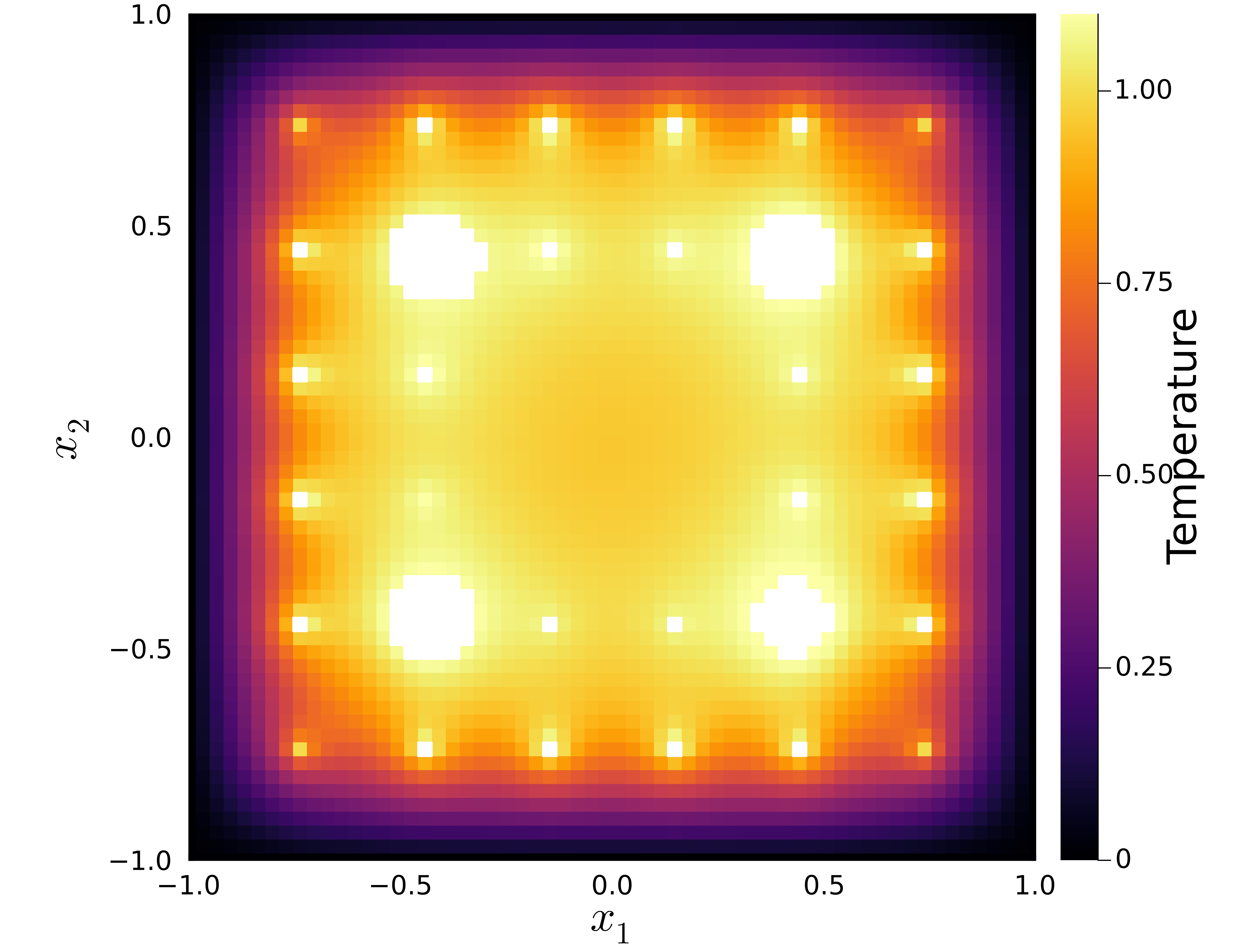}
         \caption{$T(x)$: $\alpha = 0.96$}
         \label{fig:diffusion_NL_0.96_bigM_baron}
     \end{subfigure}
          \hspace{1cm}
     \begin{subfigure}[b]{0.4\textwidth}
         \centering
         \includegraphics[width=1\textwidth]{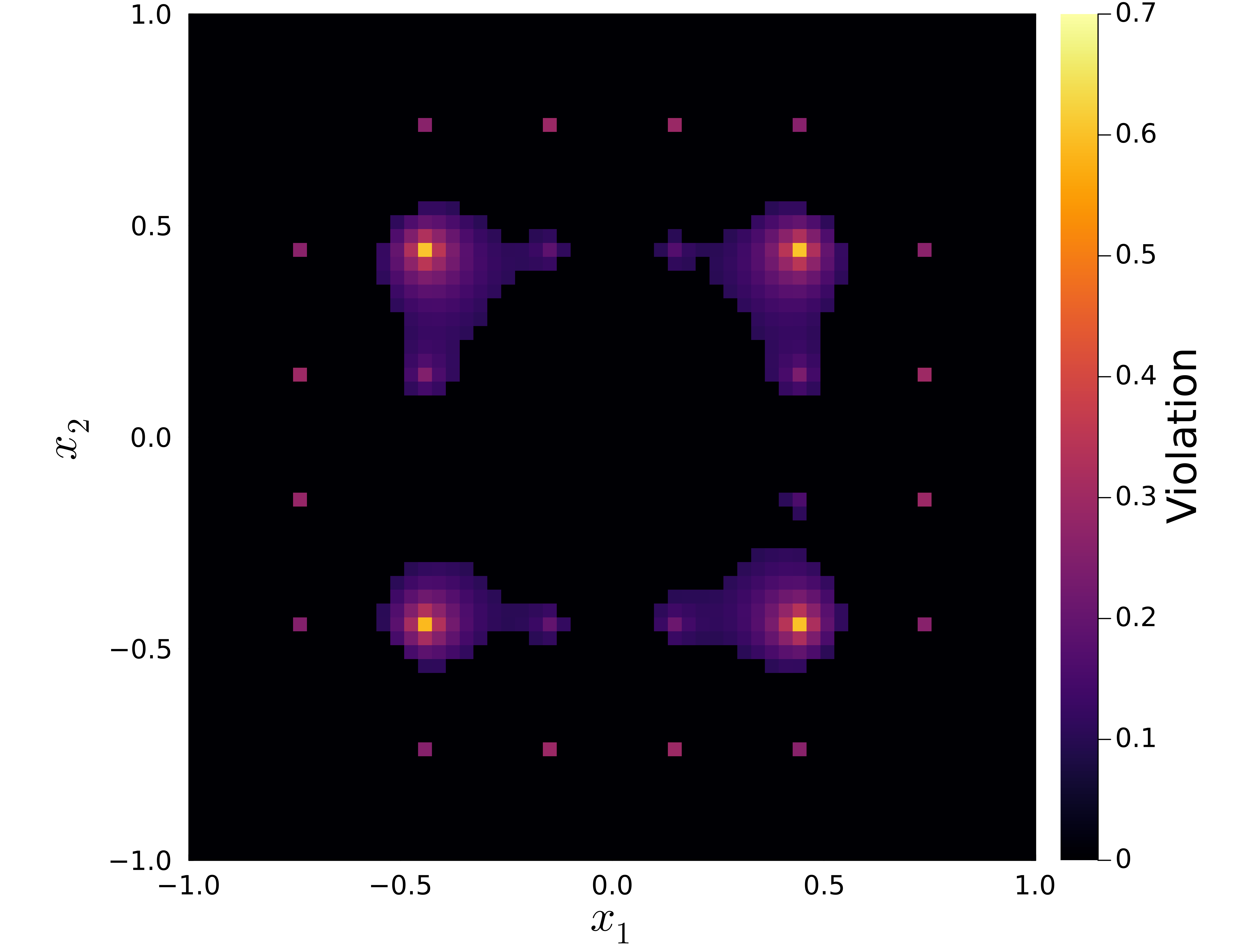}
         \caption{$\max(0, T(x) - T_{max})$: $\alpha = 0.9$}
         \label{fig:diffusion_NL_0.9_bigM_baron_violations}
     \end{subfigure}
          \hspace{1cm}
     \begin{subfigure}[b]{0.4\textwidth}
         \centering
         \includegraphics[width=1\textwidth]{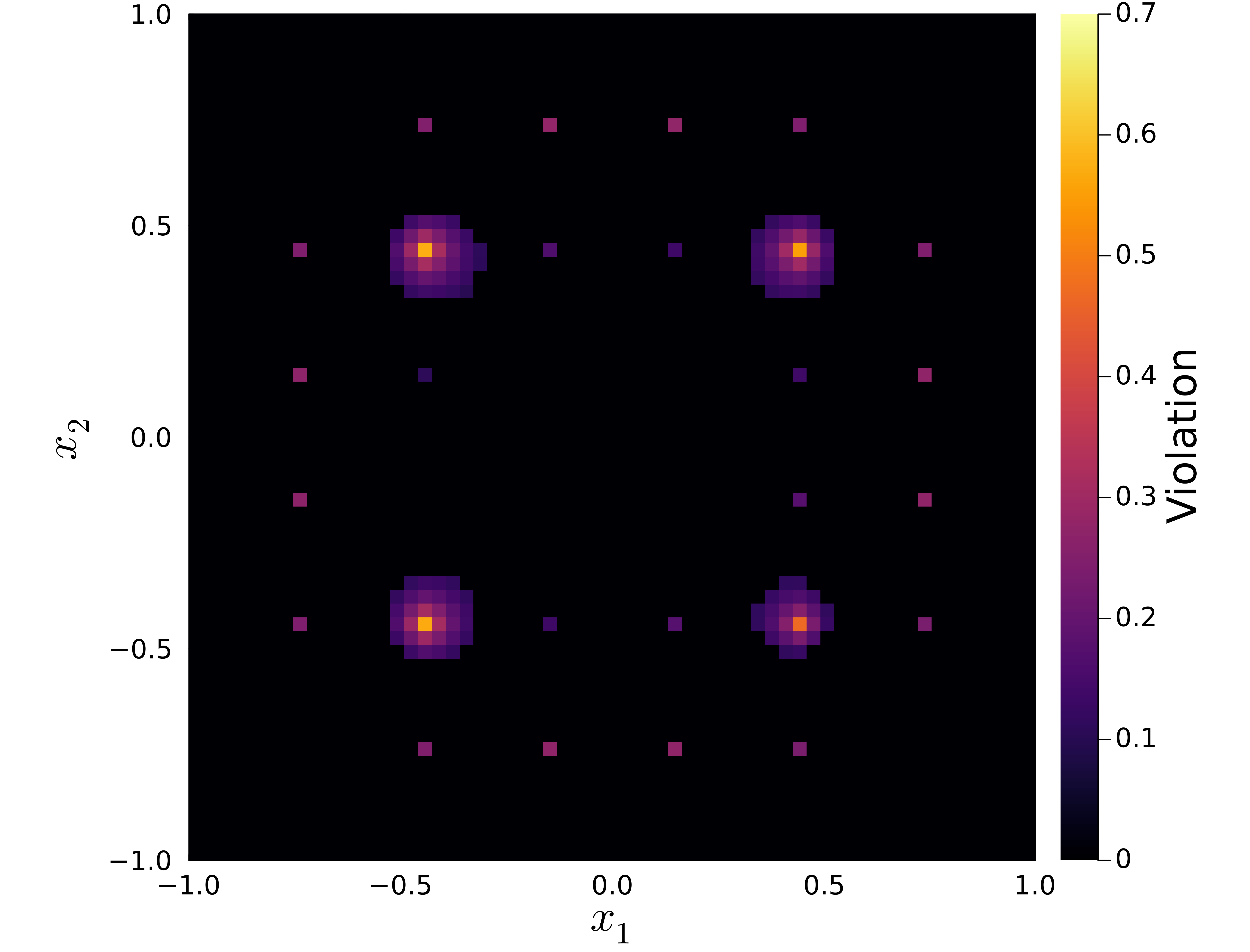}
         \caption{$\max(0, T(x) - T_{max})$: $\alpha = 0.96$}
         \label{fig:diffusion_NL_0.96_bigM_baron_violations}
     \end{subfigure}
    \caption{Temperature and constraint violation distributions for different $\alpha$ values using the big-M reformulation. When plotting $T(x)$, temperatures that exceed $T_{max}$ are displayed in white, regardless of the magnitude. The violation magnitudes are shown in the corresponding violation plots.}
    \label{fig:2D_diffusion_NL_BigM}
\end{figure}

Figure \ref{fig:2D_diffusion_NL_BigM} presents the temperature and constraint violation (i.e., $\max(0, T(x) - T_{max})$ distributions that correspond to the best solutions found for $\alpha \in \{0.90, 0.96\}$. These solutions do show how adjusting $\alpha$ allows us to intuitively control the fraction of the spatial domain that respects Constraint \eqref{eq:2D_diffusion_constraint}. However, the immense computational cost motivates the use of continuous approximations.

% \vspace{2mm}
\newpage
\noindent \underline{Continuous Approximation via MPCC}
\vspace{2mm}

\noindent The MPCC version of \eqref{eq:2D_diffusion_bigM} is given by:
\begin{equation}
    \begin{aligned}
    &&&&& T(x)-T_{max} \leq M(1 - y^1(x)), && x \in \mathcal{D}_x \\
    &&&&& 0 \leq y^0(x) \perp y^1(x) \geq 0, &&  x \in \mathcal{D}_x\\
    &&&&&  y^0(x) + y^1(x) = 1, && x \in \mathcal{D}_x\\
    &&&&& \mathbb{E}_x[y^1(x)] \geq \alpha &&   \\
    &&&&&  y^0(x), y^1(x) \in [0,1], &&  x \in \mathcal{D}_x.
    \end{aligned}
    \label{eq:2D_diffusion_mpcc}
\end{equation}
This is iteratively solved via the smooth-max method using the $\epsilon$ values listed in Table \ref{tab:tolerances} in Section \ref{app:pandemic}. 
Each of the 40 NLP problems solved has 11,568 variables and 15,376 constraints.
In this case, \textsc{Ipopt} and \textsc{Conopt4} yielded the same solutions. Figure \ref{fig:2D_diffusion_NL_MPCC_CVaR} shows the temperature distributions obtained with \textsc{Ipopt} for $\alpha \in \{0.90, 0.96\}$ for MPCC which are identical to the results obtained by CVaR discussed in the subsection below.

\begin{figure}[!htb]
     \centering
     \begin{subfigure}[b]{0.4\textwidth}
         \centering
         \includegraphics[width=1\textwidth]{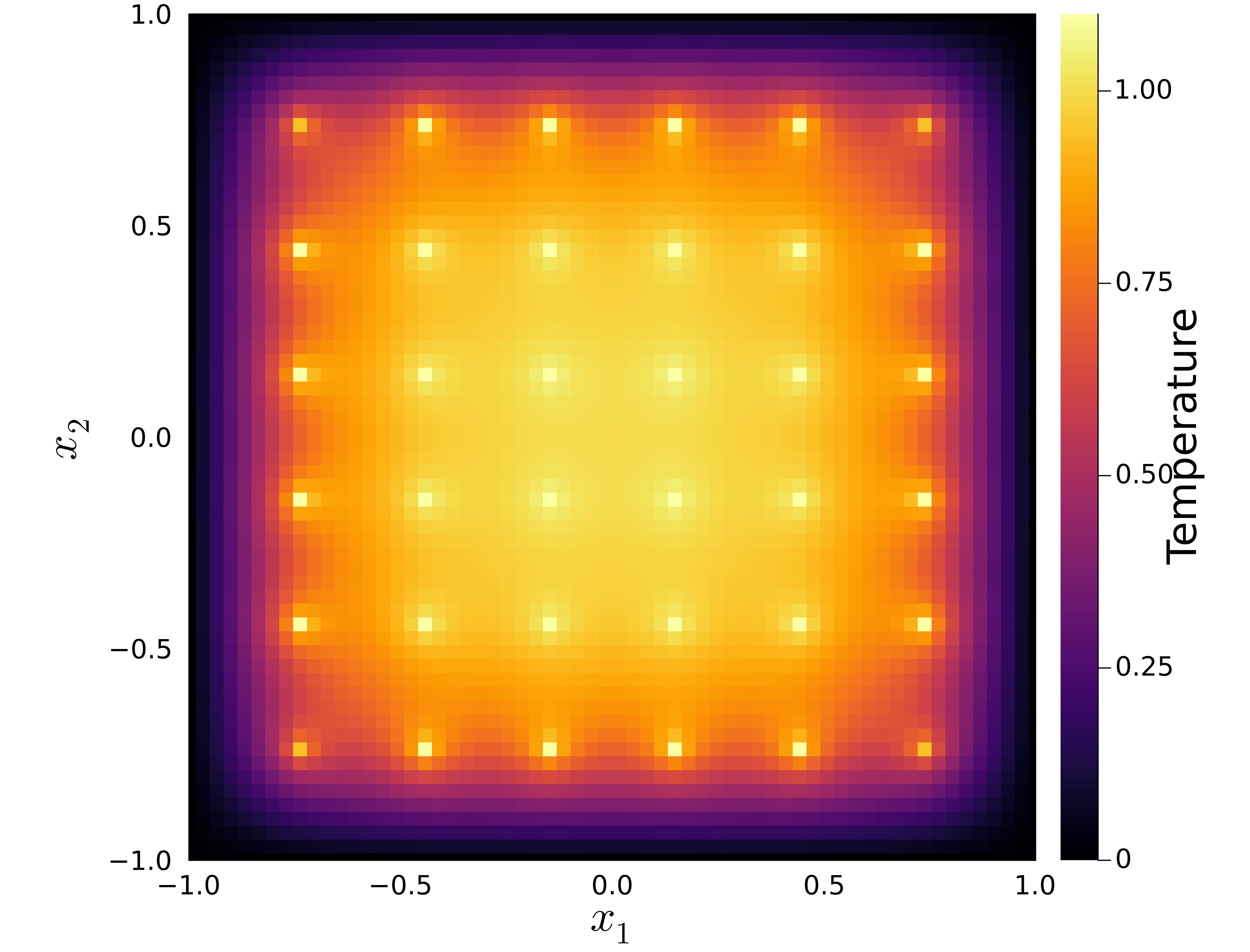}
         \caption{$T(x)$: $\alpha = 0.9$}
         \label{fig:diffusion_NL_0.9_CVaR}
     \end{subfigure}
          \hspace{1cm}
     \begin{subfigure}[b]{0.4\textwidth}
         \centering
         \includegraphics[width=1\textwidth]{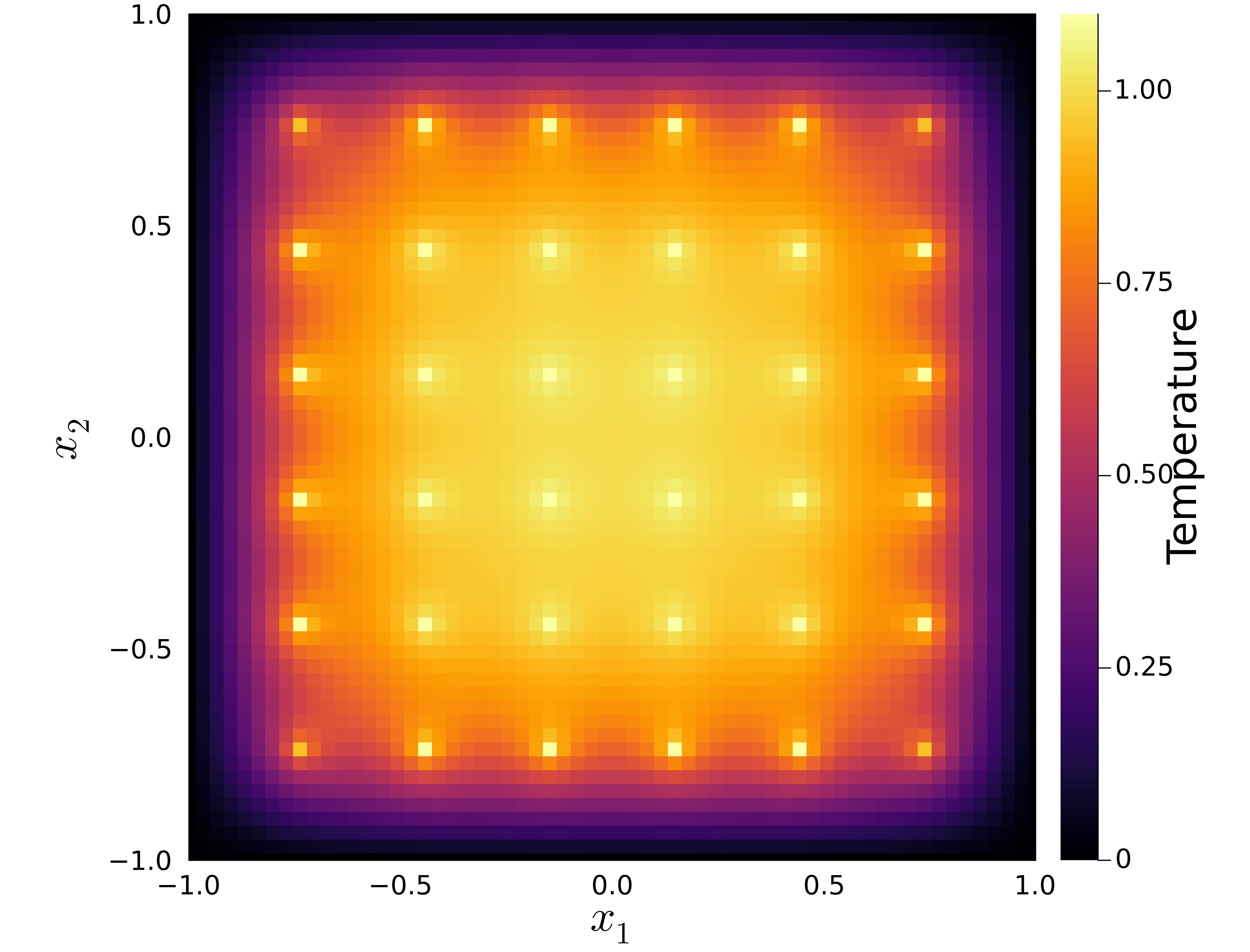}
         \caption{$T(x)$: $\alpha = 0.96$}
         \label{fig:diffusion_NL_0.96_CVaR}
     \end{subfigure}
    \caption{Temperature distributions for different $\alpha$ values using the MPCC and CVaR approximations.}
    \label{fig:2D_diffusion_NL_MPCC_CVaR}
\end{figure}

\vspace{2mm}
\noindent \underline{Continuous Approximation via CVaR}
\vspace{2mm}

\noindent The CVaR approximation for Constraint \eqref{eq:2D_diffusion_event} is:
\begin{equation}
    \begin{aligned}
    &&&&& \phi(x) \geq T(x)-T_{max} - \lambda , && x \in \mathcal{D}_x \\
    &&&&& \mathbb{E}_x[\phi(x)] \leq \lambda(1-\alpha), && x \in \mathcal{D}_x 
    \end{aligned}
    \label{eq:diffusion_cvar}
\end{equation}
where $\phi(x) \in \mathbb{R}_+$ and $\lambda \in \mathbb{R}$ are continuous variables resulting in a model with 15,413 variables and 15,377 constraints. The optimal temperature distributions obtained with $\alpha \in \{0.90, 0.96\}$ via \textsc{Ipopt} are given in Figure \ref{fig:2D_diffusion_NL_MPCC_CVaR}. Note the violation plots are omitted since $T_{max}$ is not violated anywhere on plate. Hence, CVaR again provides an overly conservative solution that effectively enforces Constraint \eqref{eq:2D_diffusion_constraint} as a hard constraint.

% \vspace{2mm}
\newpage
\noindent \underline{Continuous Approximation via SigVaR}
\vspace{2mm}

\noindent The SigVaR approximation for  \eqref{eq:2D_diffusion_event} is:
\begin{equation}
    \begin{aligned}
    &&&&& \phi(x) \geq 2 \dfrac{1+\beta}{\beta + \exp(-\gamma(T(x)-T_{max}))}  - 1 , && x \in \mathcal{D}_x \\
    &&&&& \mathbb{E}_x[\phi(x)] \leq 1-\alpha, && x \in \mathcal{D}_x 
    \end{aligned}
    \label{eq:diffusion_sigvar}
\end{equation}
where $\phi(x) \in \mathbb{R}_+$ and $\lambda \in \mathbb{R}$ are continuous variables yielding an NLP formulation with 15,413 variables and 15,377 constraints. Also, $\beta$ and $\gamma$ are initialized with $15.50$ and $7.50$, respectively, following the methodology proposed in \cite{cao2020sigmoidal}.

\begin{figure}[!htb]
     \centering
     \begin{subfigure}[b]{0.4\textwidth}
         \centering
         \includegraphics[width=1\textwidth]{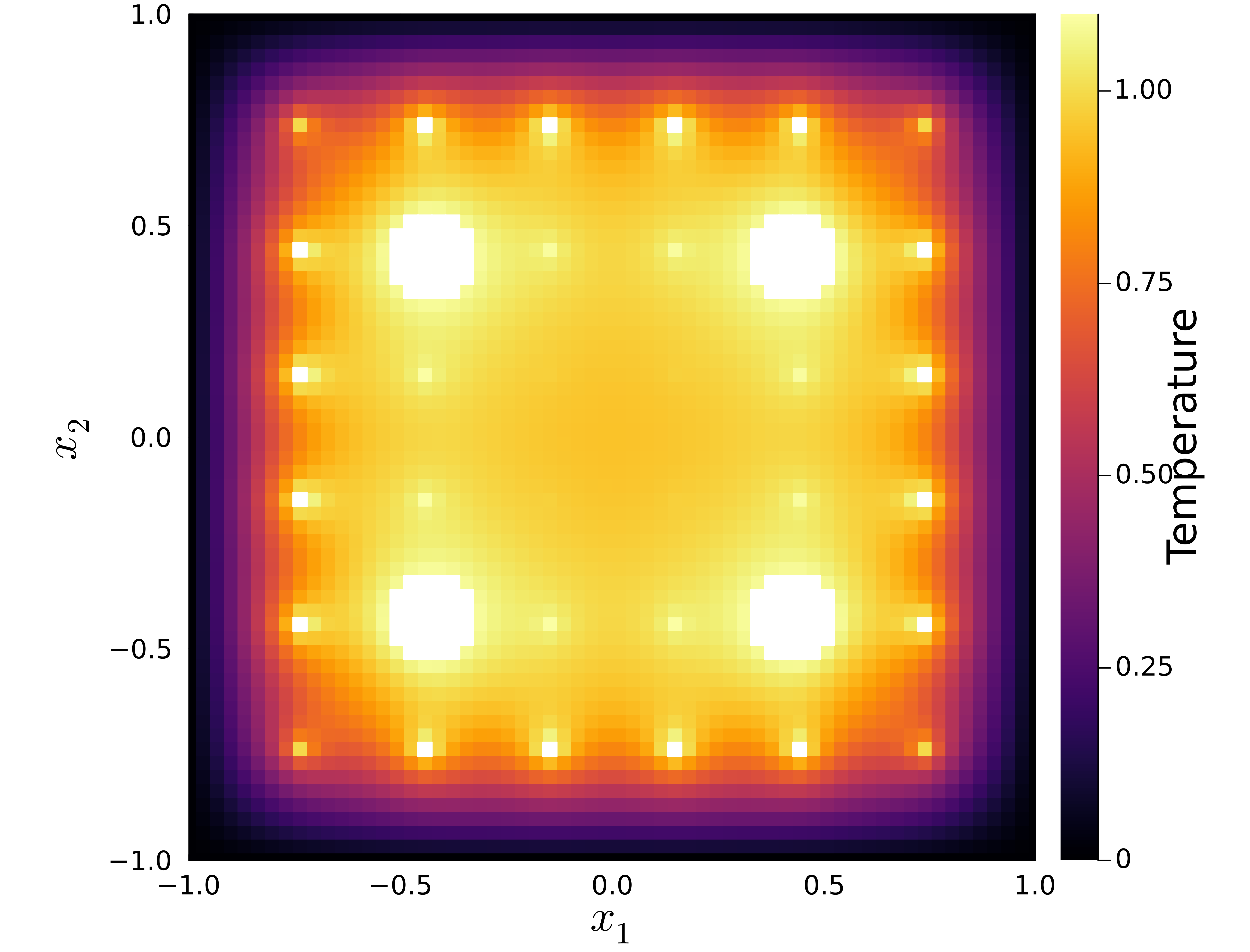}
         \caption{$T(x)$: $\alpha = 0.9$}
         \label{fig:diffusion_NL_0.9_SigVaR}
     \end{subfigure}
          \hspace{1cm}
     \begin{subfigure}[b]{0.4\textwidth}
         \centering
         \includegraphics[width=1\textwidth]{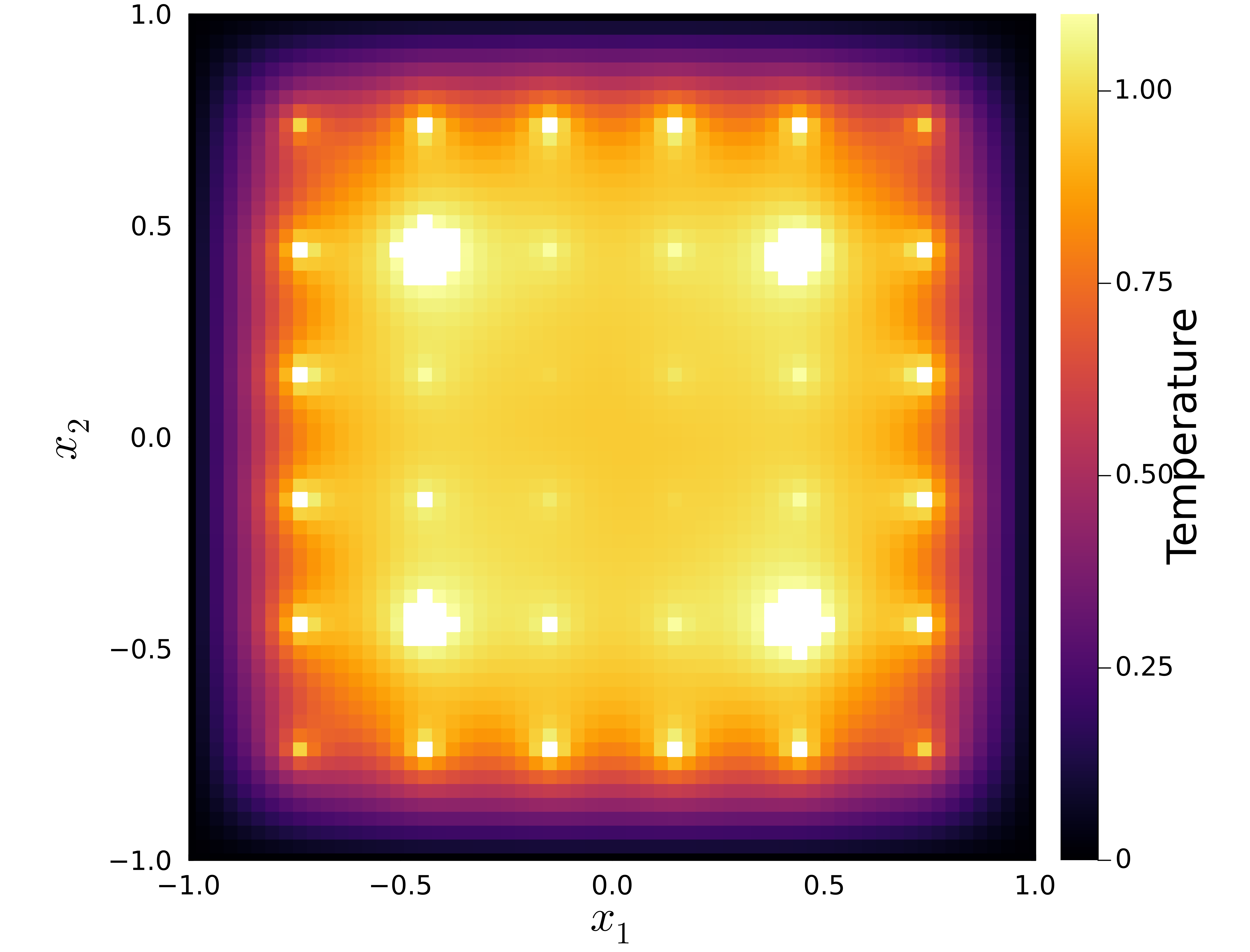}
         \caption{$T(x)$: $\alpha = 0.96$}
         \label{fig:diffusion_NL_0.96_SigVaR}
     \end{subfigure}
          \hspace{1cm}
     \begin{subfigure}[b]{0.4\textwidth}
         \centering
         \includegraphics[width=1\textwidth]{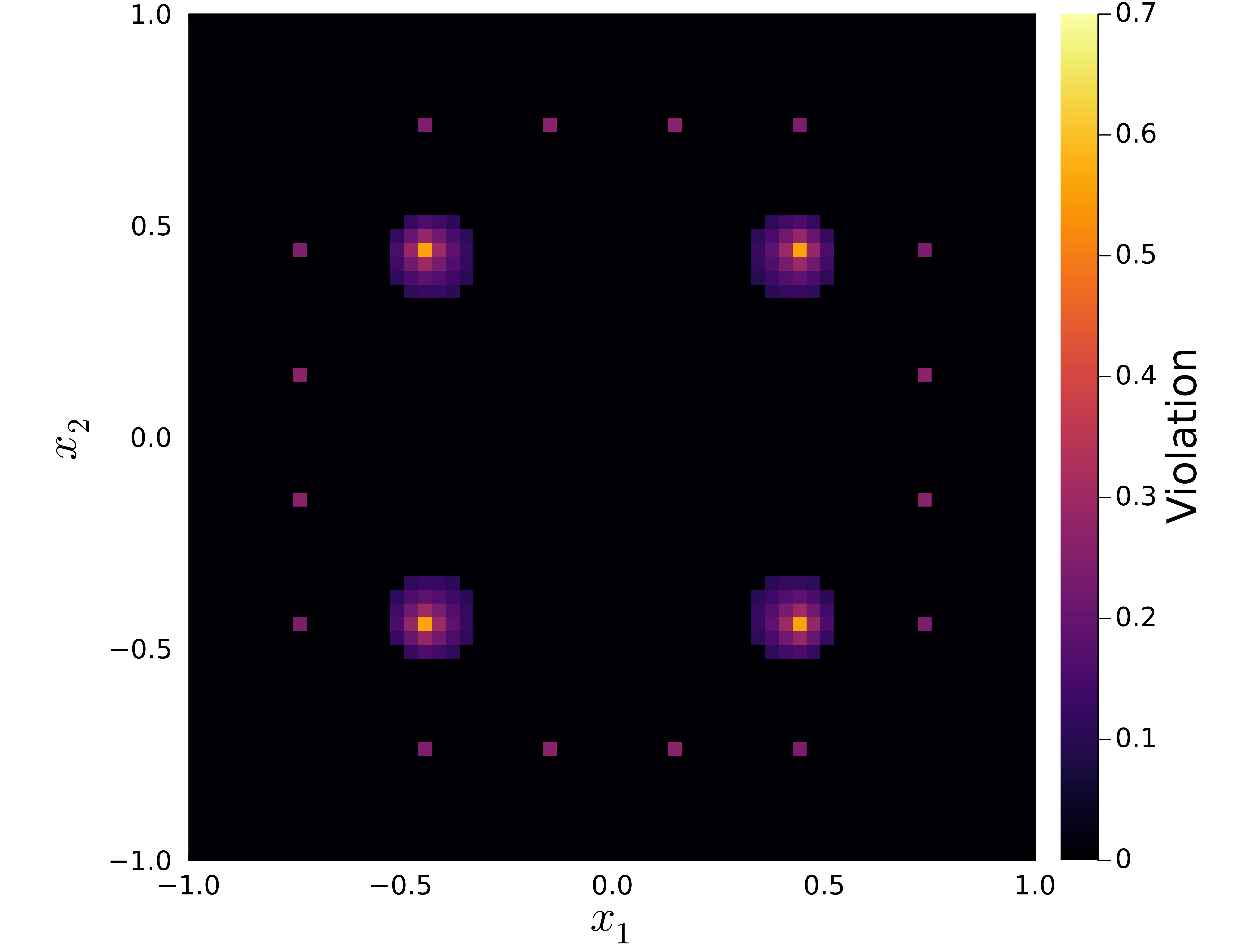}
         \caption{$\max(0, T(x) - T_{max})$: $\alpha = 0.9$}
         \label{fig:diffusion_NL_0.9_SigVaR_violations}
     \end{subfigure}
          \hspace{1cm}
     \begin{subfigure}[b]{0.4\textwidth}
         \centering
         \includegraphics[width=1\textwidth]{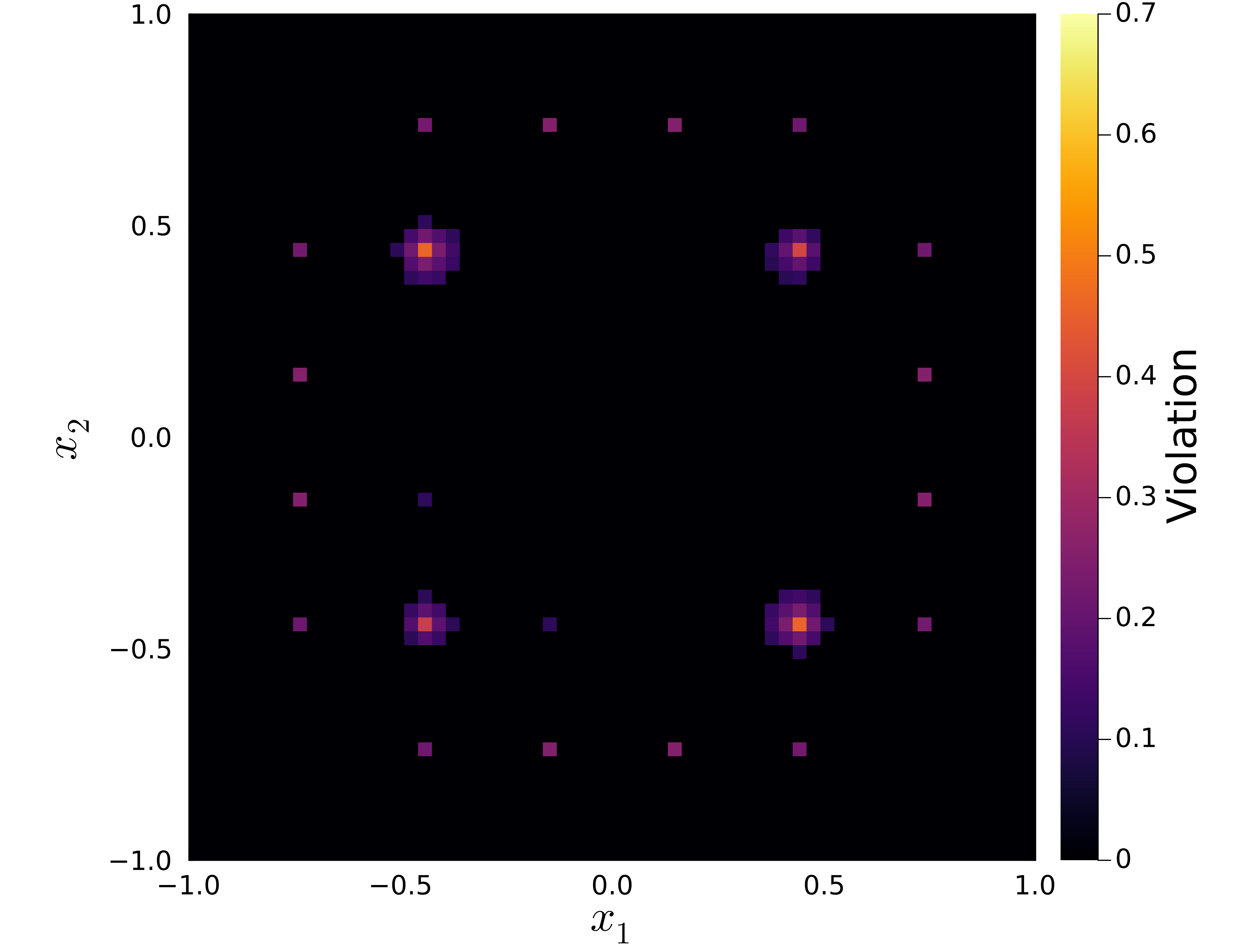}
         \caption{$\max(0, T(x) - T_{max})$: $\alpha = 0.96$}
         \label{fig:diffusion_NL_0.96_SigVaR_violations}
     \end{subfigure}
    \caption{Temperature and constraint violation profiles for different $\alpha$ values using the SigVaR approximation. Temperatures that exceed $T_{max}$ are colored white in the temperature profiles and the magnitude of these violations are shown in the violation plots.}
    \label{fig:2D_diffusion_NL_SigVaR}
\end{figure}

Figure \ref{fig:2D_diffusion_NL_SigVaR} shows the optimal temperature distributions obtained using SigVaR. The SigVaR approximation effectively implements the event constraint, obtaining a high quality solution relative to the best suboptimal solution obtained via big-M. The temperature distributions also exhibit a significantly higher degree of symmetry which aligns with the geometry of the plate. These results also clearly illustrate how $\alpha$ provides an intuitive parameter to adjust fractional area for which Constraint \eqref{eq:2D_diffusion_constraint} is enforced.

% \vspace{2mm}
\newpage
\noindent \underline{Solution Method Comparison}
\vspace{2mm}

\begin{table}[!htb]
    \centering
    \caption{The optimal objective values and solution times obtained with different solution methods and varied $\alpha$ values for the 2D heated plate study.}
    \label{tab:diffusion_method_comparison_NL}
    \begin{threeparttable}
        \begin{tabular}{c|cc|cc|cc|cc}
        \toprule
        & \multicolumn{2}{c|}{\textbf{Big-M}} & \multicolumn{2}{c|}{\textbf{MPCC}} & \multicolumn{2}{c|}{\textbf{CVaR}} & \multicolumn{2}{c}{\textbf{SigVaR}} \\ 
        \cmidrule{2-9}
        $\alpha$ & Time [s] & Objective & Time [s] & Objective & Time [s] & Objective & Time [s] & Objective \\ \midrule
        $0.90$    & $2,538$ & $0.8334$ & $311.73$  & $0.9465$ & $4.51$  & $0.9465$ & $98.19$ & $0.8467$ \\
        $0.95$    & $2,541$ & $0.8334$ & $340.65$  & $0.9465$ & $2.02$  & $0.9465$ & $107.03$ & $0.8553$ \\
        $0.96$    & $2,808$ & $0.8409$ & $334.48$   & $0.9465$ & $2.84$  & $0.9465$ & $142.56$ & $0.8566$ \\ 
        $0.97$    & $2.808$ & $0.8409$ & $325.61$ & $0.9465$ & $1.87$  & $0.9465$ & $141.40$ & $0.8612$ \\ 
        $0.99$    & $3,252$  & $0.8434$ & $340.37$   & $0.9465$ & $1.96$  & $0.9465$ & $172.31$ & $0.8771$   \\ 
        $0.999$  & $3,191$ & $0.8434$ & $360.30$  & $0.9465$ & $6.77$ & $0.9465$ & $315.72$ & $0.9796$    \\ 
        \bottomrule
        \end{tabular}
    \end{threeparttable}
\end{table}

\noindent Table \ref{tab:diffusion_method_comparison_NL} summarizes the numerical results in terms of solution times and optimal objective values across a range of $\alpha$ values. For the big-M method, the table reports the time that BARON required to obtain a relative optimality gap of 0.5\%. The other methods are solved to local optimality using \textsc{Ipopt} using the default tolerance (i.e., $10^{-8}$). The MINLP formed via big-M incurs a significant computational burden, but is able to find the best objective values reported in this study since the event constraint is directly enforced. Certain solutions are repeated since the change in $\alpha$ is smaller than the optimality gap threshold of 0.5\%. In contrast, MPCC and CVaR dramatically reduce solution time, but both consistently converge to the same overly conservative solution, effectively ignoring our choice of $\alpha$. Though the MPCC solution times are greater since it solves multiple NLPs unlike CVaR. The SigVaR approximation produces solutions comparable to the big-M method while decreasing the computational cost by approximately a factor of 20. This further demonstrates how SigVaR is an efficient and practical approach for solving event-constrained programs. One caveat to this observation is that the solution quality for SigVaR degrades in this case for $\alpha = 0.999$ which can be attributed to poor conditioning in the Jacobian due to the scaling of the discretized PDEs and the transcribed integrals in the event constraint.

\begin{figure}[!htb]
     \centering
     \begin{subfigure}[b]{0.4\textwidth}
         \centering
         \includegraphics[width=1\textwidth]{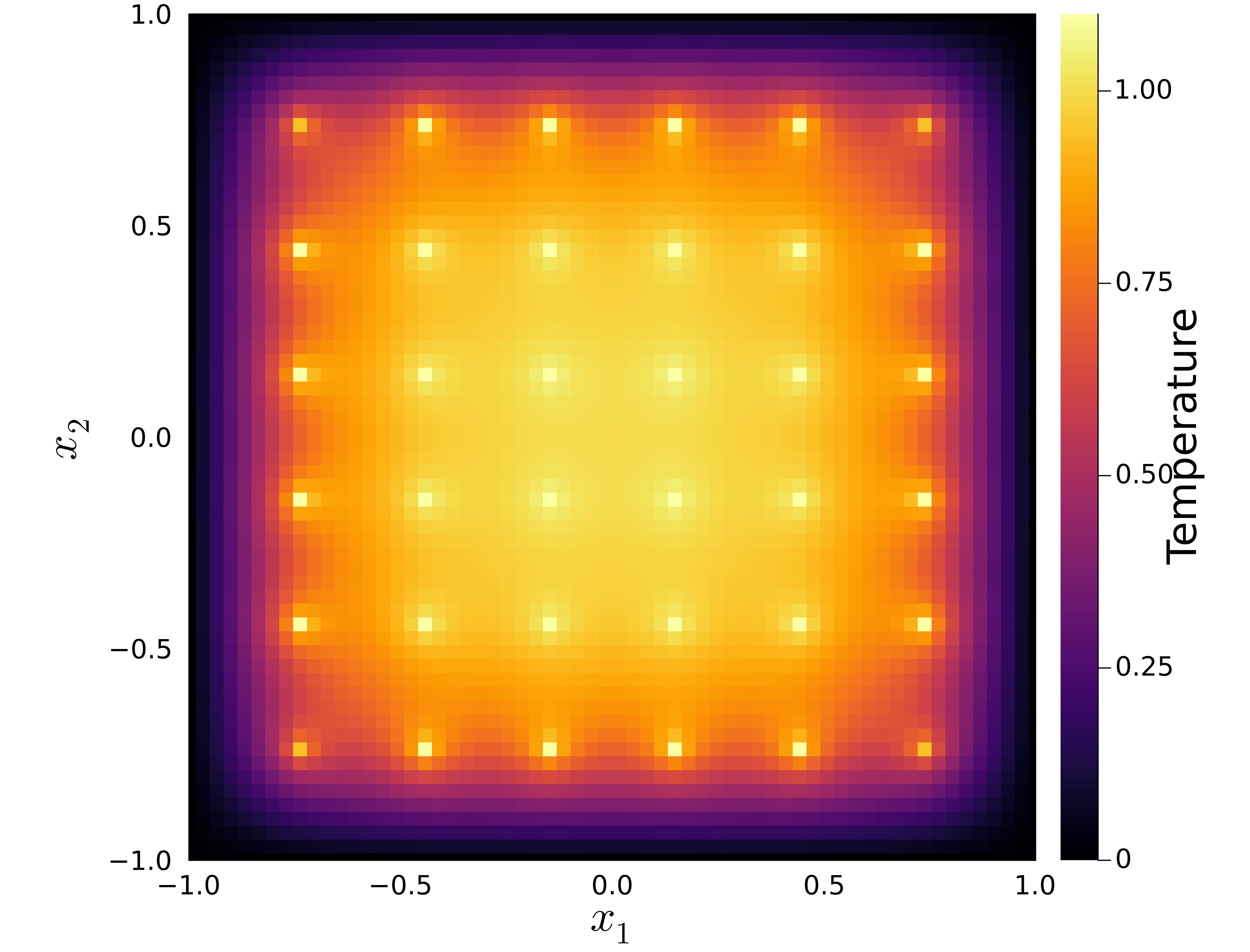}
         \caption{Hard Constraint}
         \label{fig:diffusion_NL_0.95_hard}
     \end{subfigure}
          \hspace{1cm}
     \begin{subfigure}[b]{0.4\textwidth}
         \centering
         \includegraphics[width=1\textwidth]{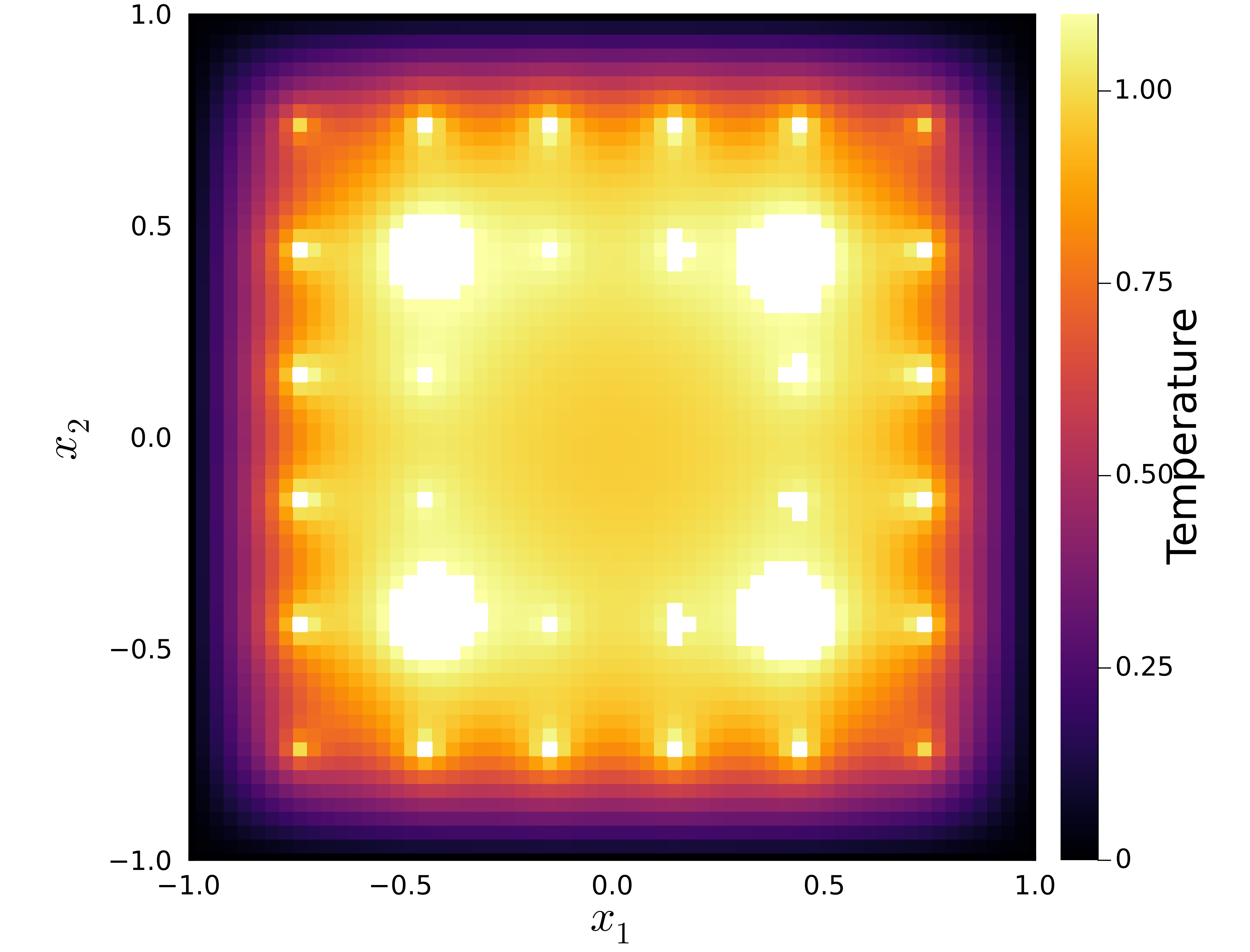}
         \caption{Big-M: $\alpha = 0.95$}
         \label{fig:diffusion_NL_0.95_bigM_baron}
     \end{subfigure}
          \hspace{1cm}
     \begin{subfigure}[b]{0.4\textwidth}
         \centering
         \includegraphics[width=1\textwidth]{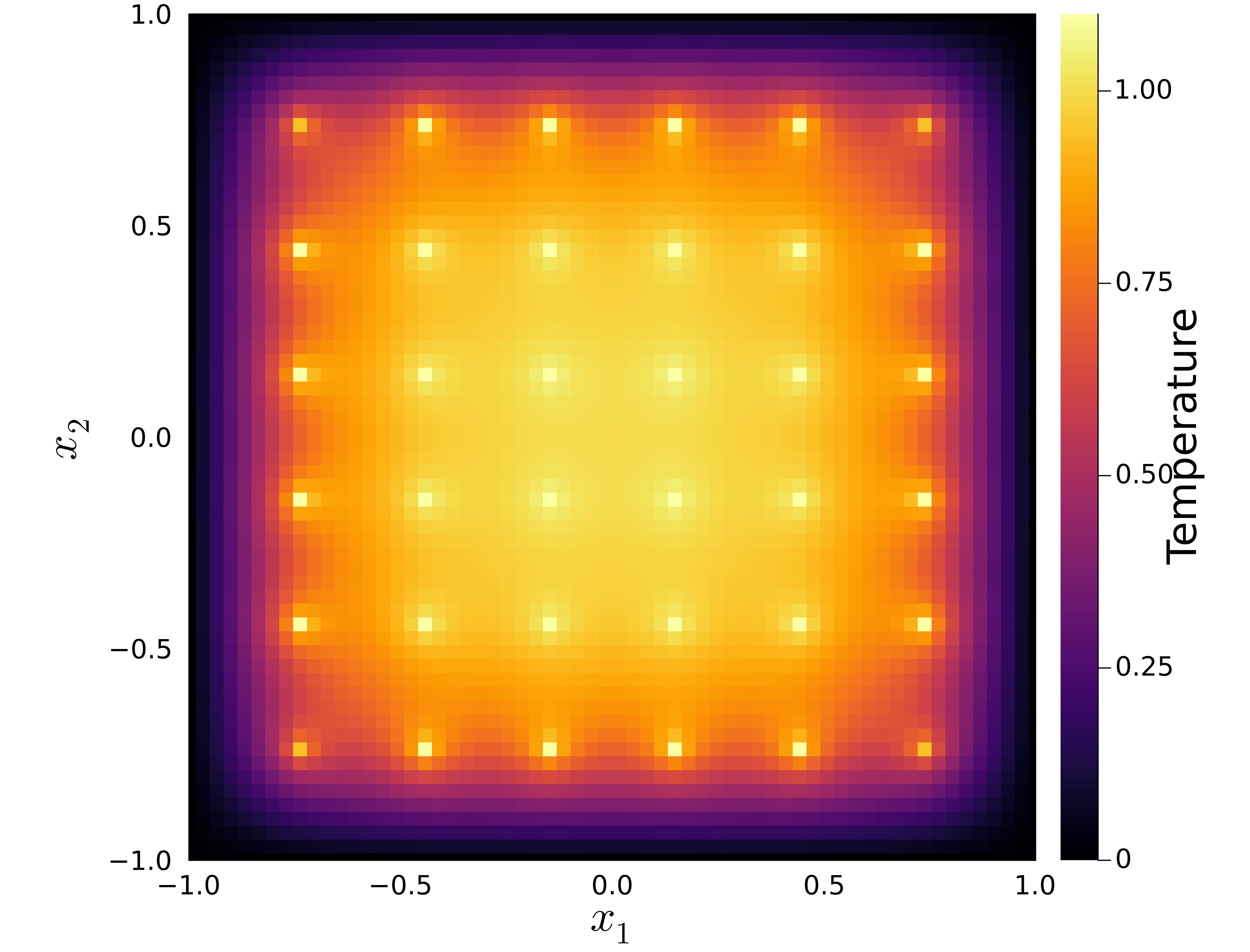}
         \caption{CVaR/MPCC: $\alpha = 0.95$}
         \label{fig:diffusion_NL_0.95_CVaR/MPCC}
     \end{subfigure}
          \hspace{1cm}
     \begin{subfigure}[b]{0.4\textwidth}
         \centering
         \includegraphics[width=1\textwidth]{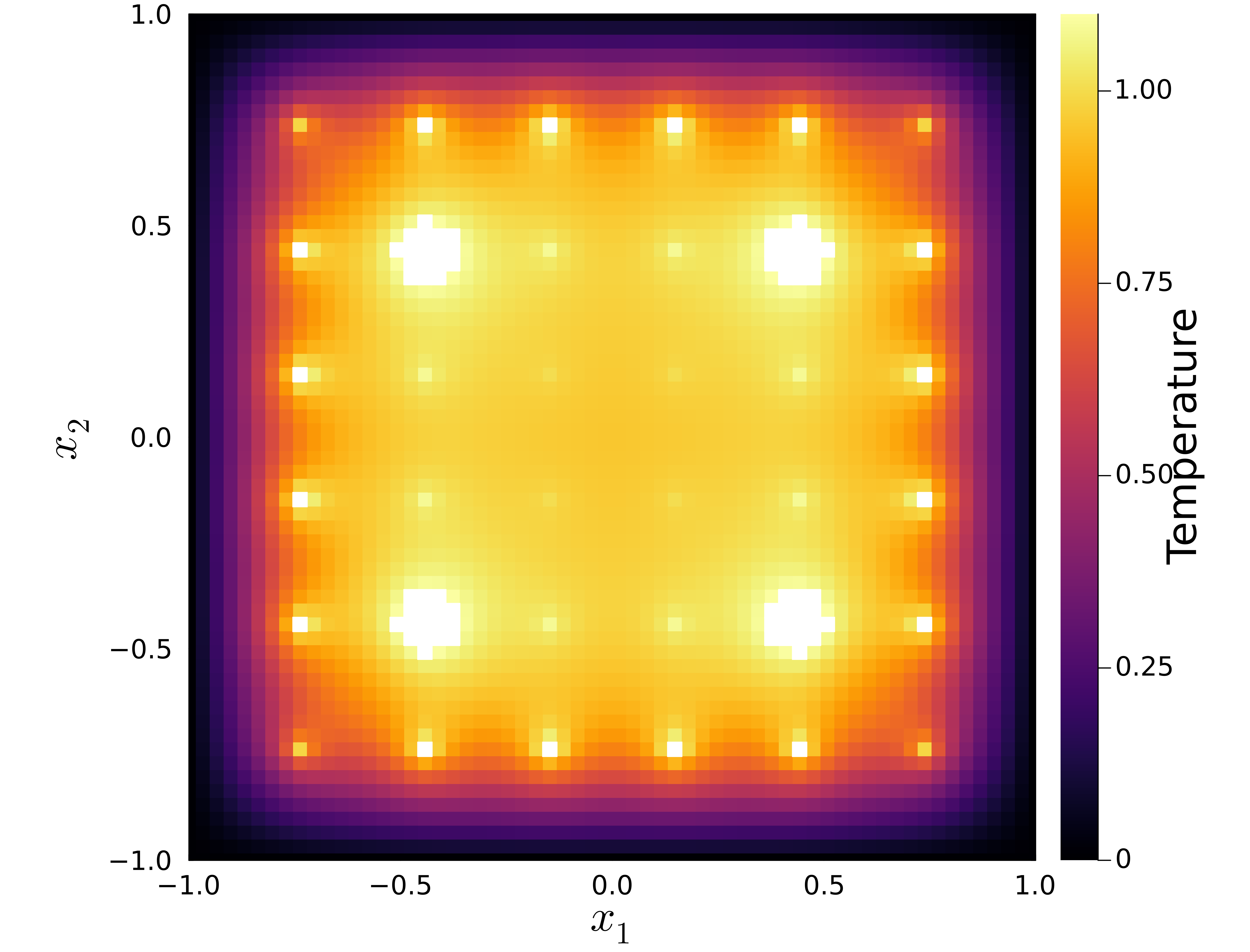}
         \caption{SigVaR: $\alpha = 0.95$}
         \label{fig:diffusion_NL_0.95_SigVaR}
     \end{subfigure}
    \caption{Comparison of the temperature distributions obtained by different solution methods for $\alpha$=0.95. Any temperatures exceeding $T_{max}$ are shown in white.}
    \label{fig:2D_diffusion_NL_methods_0.95}
\end{figure}

Figure \ref{fig:2D_diffusion_NL_methods_0.95} summarizes all the optimal temperature distributions obtained across the methods for $\alpha = 0.95$. Again, it is clear that both CVaR and MPCC approximations give solutions that are identical to enforcing Constraint \eqref{eq:2D_diffusion_constraint} as a hard constraint. Additionally, when comparing the big-M and SigVaR methods, both produce symmetric solutions consistent with the plate geometry and big-M is able to have constraint violations over a greater fractional area (nearly equal to $1-\alpha$). Finally, an analytical comparison of the ISE on both the temperature distribution and the indicator function is computed and shown in Tables \ref{tab:diffusion_NL_T(x)_ISE_app} and \ref{tab:diffusion_NL_indicator(x)_ISE_app} in Section \ref{app:diffusion} of the Appendix.

\FloatBarrier

\section{Conclusion}\label{sec:conclusion}
We have presented a framework and solution techniques for generalized analogs of chance constraints from stochastic programming to a general context for InfiniteOpt problems in a modeling paradigm that we call event constrained programming. This framework generalizes both the aggregation logic and the domain of traditional chance constraints, establishing them as a versatile modeling element. We proposed several solution methods for event-constrained programs based on techniques from chance constraint literature, GDP, and MPCC. Through three case studies, we demonstrated the applicability of event constraints to stochastic domains with complex logic, as well as to other continuous domains such as time and space. These studies highlight how methods originally developed for chance constraints can be effectively adapted to efficiently solve event constraints across general InfiniteOpt problems.

\section*{Acknowledgments}
We thank the Center for Advanced Process Decision-making and the University of Waterloo for supporting this work.

\newpage
\bibliography{references}

\appendix
\newpage
\section{Appendix}\label{sec:appendix}

\subsection{Power Grid Design Supplementary Material}\label{app:power_grid}

Tables \ref{tab:grid_mean} and \ref{tab:grid_bounds} provide supplementary parameter details for the the power grid design formulation described in Section \ref{sec:grid_case}.

\begin{table}[!htb]
    \centering
    \caption{Mean values for the stochastic demand $\xi \sim \mathcal{N}(\mu, \Sigma$).}
    \label{tab:grid_mean}
    \begin{tabular}{cccccccccccc}
        \toprule
        \textbf{Demand node} & $d_1$ & $d_2$  & $d_3$  & $d_4$  & $d_5$  & $d_6$  & $d_7$  & $d_8$  & $d_9$  & $d_{10}$  & $d_{11}$  \\
        \midrule
        $\mu$ & $87.3$ & $50.0$ & $25.0$ & $28.8$ & $50.0$ & $25.0$ & $0.0$ & $0.0$ & $0.0$ & $0.0$ & $0.0$ \\
        \bottomrule
    \end{tabular}
\end{table}

\begin{table}[!thb]
    \centering
    \caption{Generator safety operating thresholds $\bar{\bar{q}}_{g,i}, i \in \mathcal{G}$.}
    \label{tab:grid_bounds}
    \begin{tabular}{ccccccccccc}
        \toprule
        \textbf{Generator} & $g_1$ & $g_2$  & $g_3$  & $g_4$  & $g_5$  \\
        \midrule
        $\bar{\bar{q}}_{g,i}$ & $332$ & $140$ & $100$ & $100$ & $100$ \\
        \bottomrule
    \end{tabular}
\end{table}

\subsection{Optimal Disease Control Supplementary Material}\label{app:pandemic}

Figure \eqref{fig:pandemic_hardconstraint} juxtaposes the optimal trajectories obtained solving the optimal disease control problem with and without Constraint \eqref{eq:pandemic_constraint}, constituting the two extreme cases that we can interpolate between using an event constraint.

\begin{figure}[!htb]
     \centering
     \begin{subfigure}[b]{0.4\textwidth}
         \centering
         \includegraphics[width=1\textwidth]{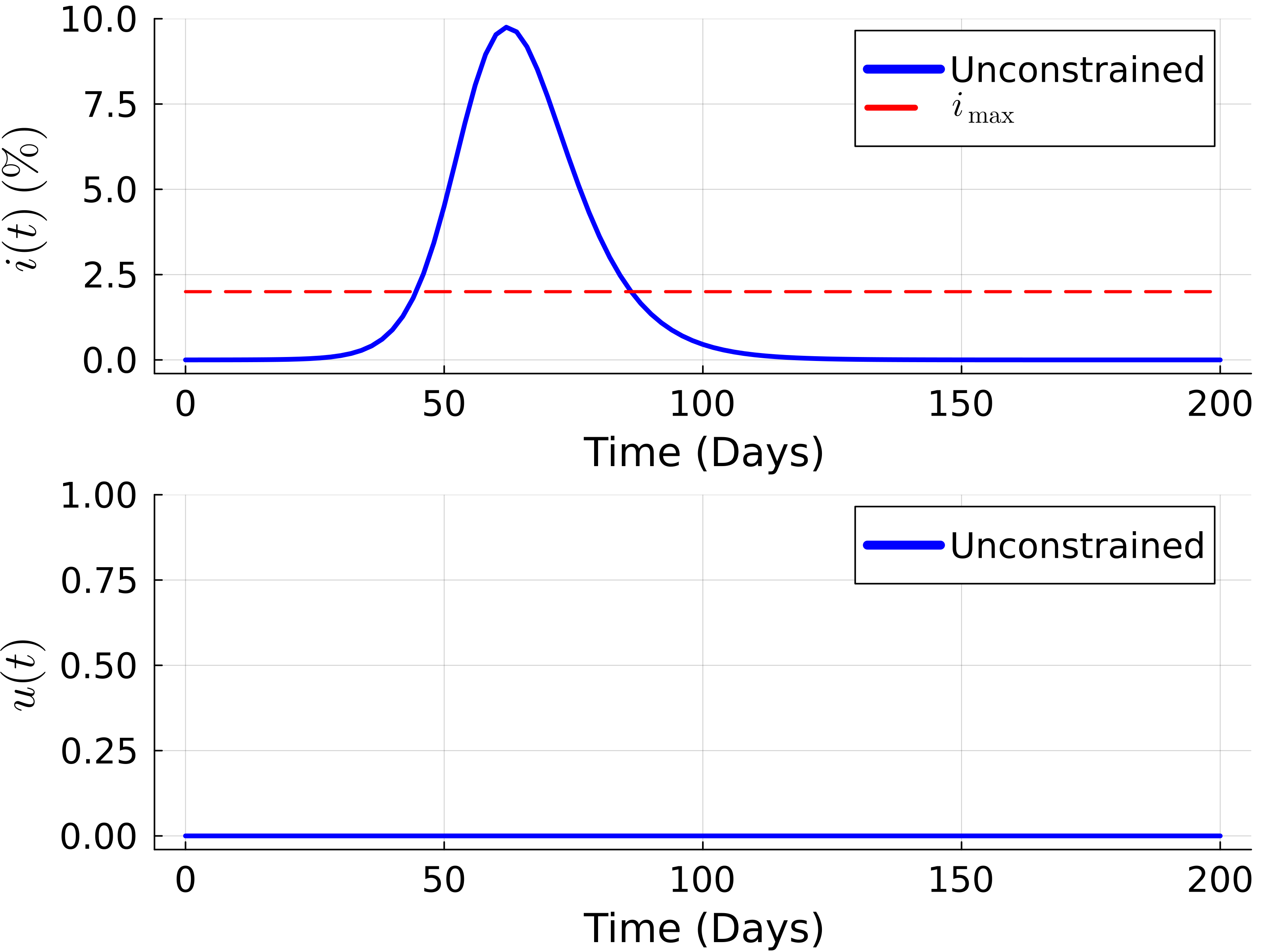}
         \caption{Unconstrained case}
         \label{fig:pandemic_unconstrained}
     \end{subfigure}
     \hspace{1cm}
     \begin{subfigure}[b]{0.4\textwidth}
         \centering
         \includegraphics[width=1\textwidth]{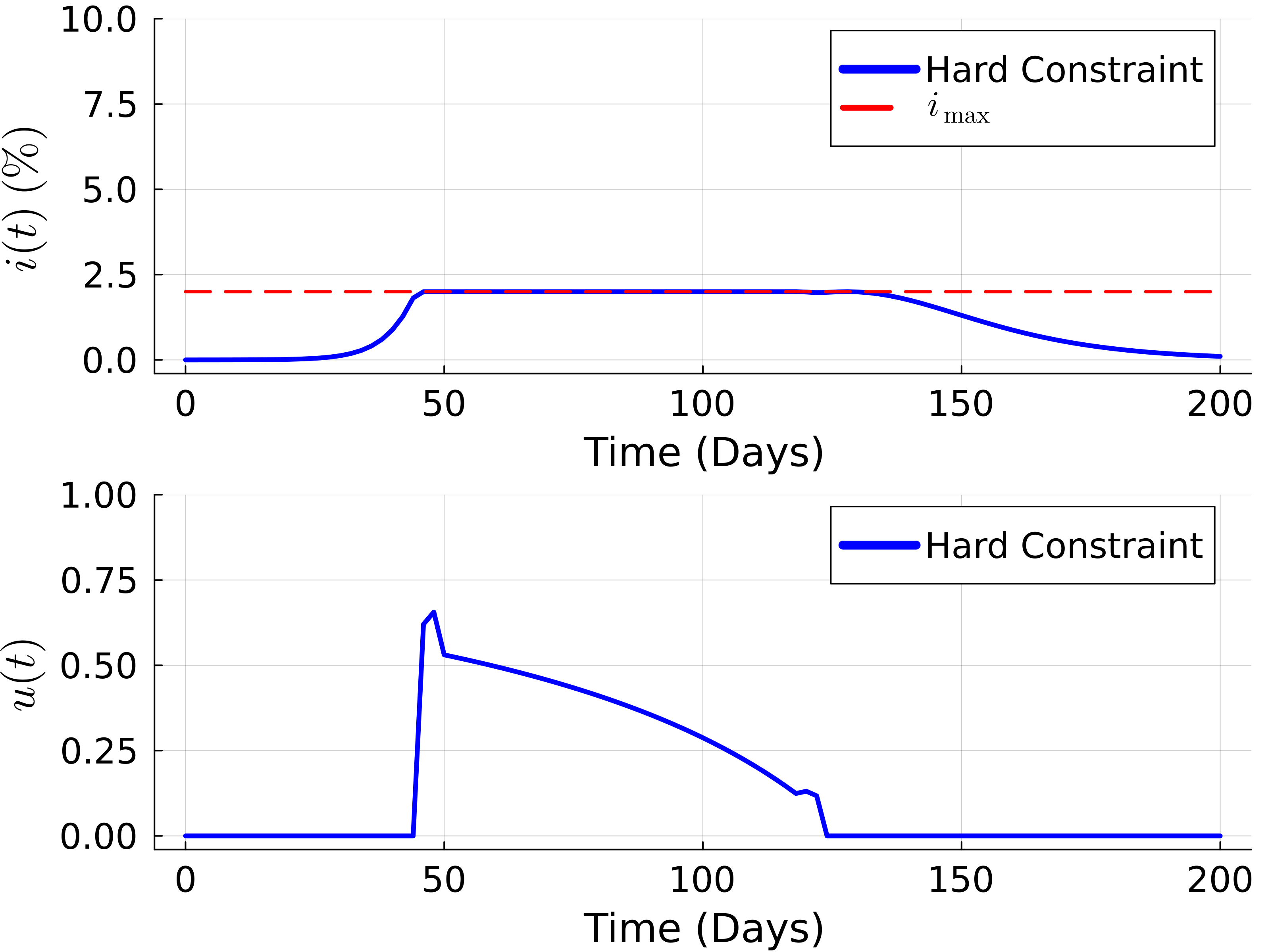}
         \caption{Constrained case}
         \label{fig:pandemic_constrained}
     \end{subfigure}
    \caption{Optimal trajectories for $i(t)$ and $u(t)$ for with and without Constraint \eqref{eq:pandemic_constraint}.}
    \label{fig:pandemic_hardconstraint}
\end{figure}

Figure \ref{fig:pandemic_bigM_app} shows the optimal trajectories for $\alpha = \{0.85, 0.9, 0.96, 0.99\}$ using the big-M reformulation of the event constraint.
\begin{figure}[!htb]
     \centering
     \begin{subfigure}[b]{0.4\textwidth}
         \centering
         \includegraphics[width=1\textwidth]{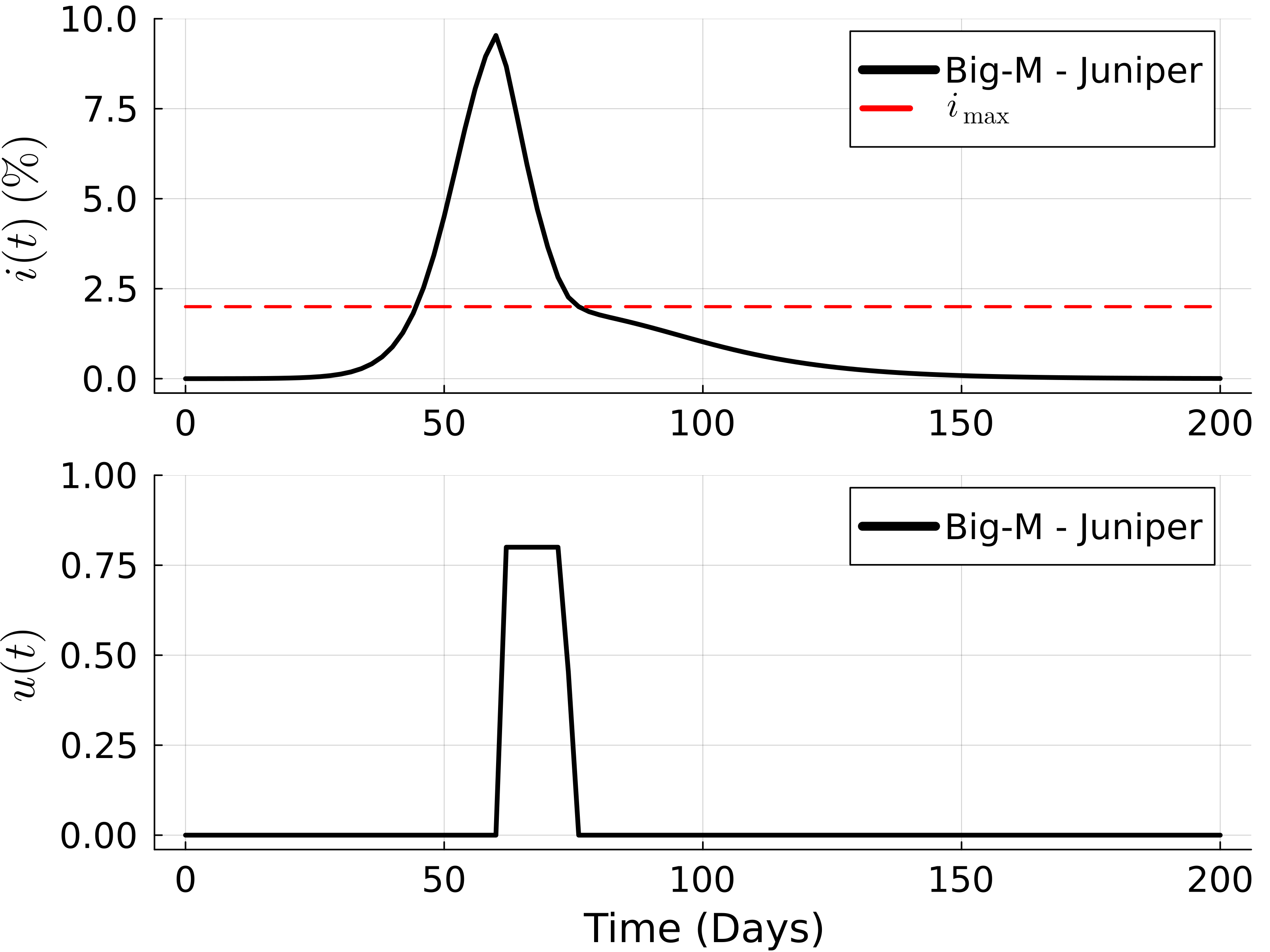}
         \caption{Big-M: $\alpha = 0.85$}
         \label{fig:pandemic_bigM_0.85_app}
     \end{subfigure}
     \hspace{1cm}
     \begin{subfigure}[b]{0.4\textwidth}
         \centering
         \includegraphics[width=1\textwidth]{figures/Pandemic_Control_Case_Study/bigM_0.9.png}
         \caption{Big-M: $\alpha = 0.9$}
         \label{fig:pandemic_bigM_0.9_app}
     \end{subfigure}
          \hspace{1cm}
     \begin{subfigure}[b]{0.4\textwidth}
         \centering
         \includegraphics[width=1\textwidth]{figures/Pandemic_Control_Case_Study/bigM_0.96.png}
         \caption{Big-M: $\alpha = 0.96$}
         \label{fig:pandemic_bigM_0.96_app}
     \end{subfigure}
          \hspace{1cm}
     \begin{subfigure}[b]{0.4\textwidth}
         \centering
         \includegraphics[width=1\textwidth]{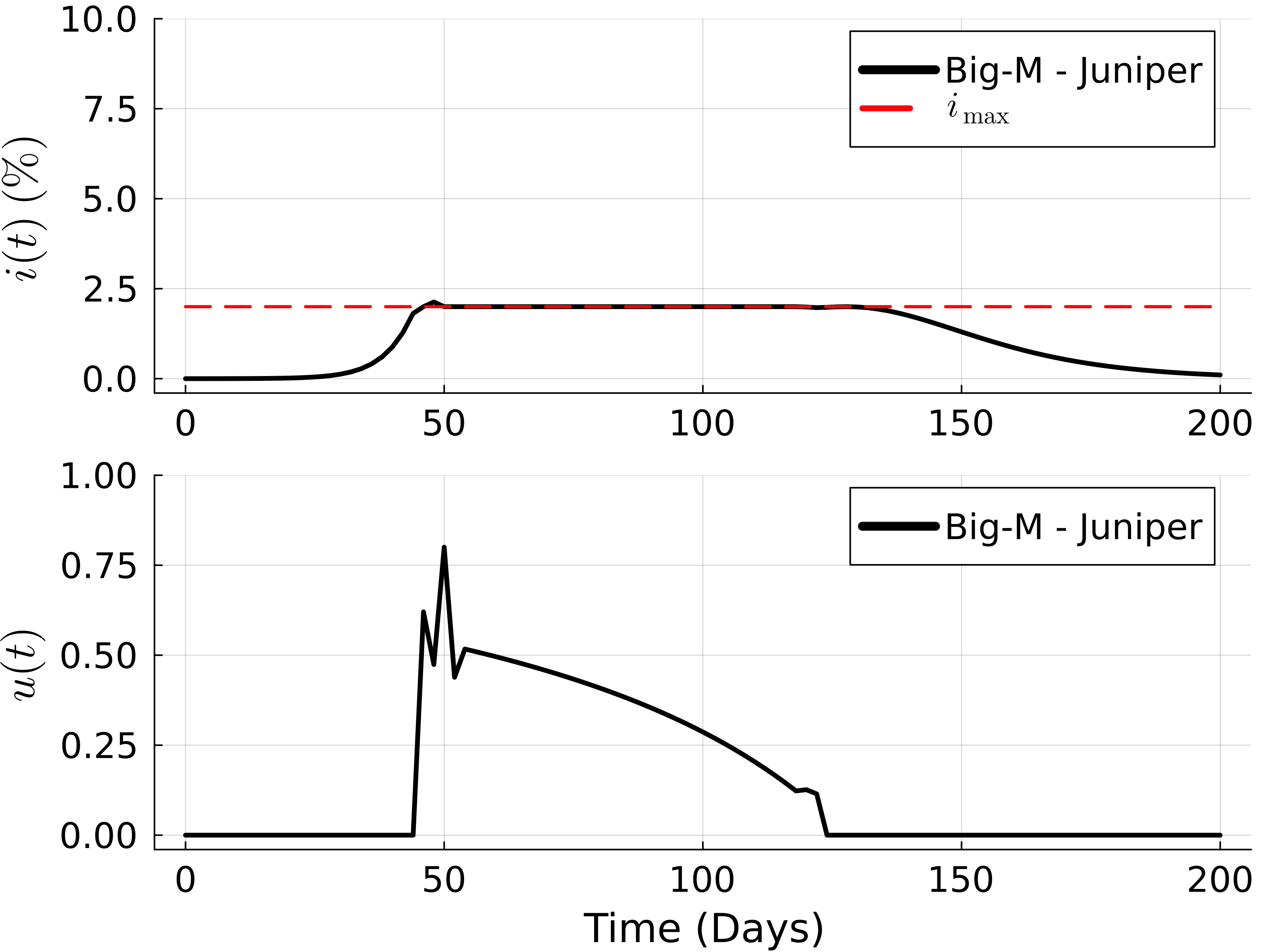}
         \caption{Big-M: $\alpha = 0.99$}
         \label{fig:pandemic_bigM_0.99_app}
     \end{subfigure}
    \caption{Optimal $i(t)$ and $u(t)$ trajectories for a representative range of $\alpha$ values using the big-M reformulation.}
    \label{fig:pandemic_bigM_app}
\end{figure}

\FloatBarrier

Table \ref{tab:tolerances} details the values of $\epsilon$ used in each iteration of the smooth-max approximation of the MPCC formulation.

\begin{table}[!htb]
    \centering
    \small % Smaller font size
    \setlength{\tabcolsep}{3pt} % Reduce column spacing
    \caption{Tolerance of the relaxation for MPCC}
    \label{tab:tolerances}
    \begin{tabular}{ccccccccccccccc}
        \toprule
        \textbf{Iter.} & 1 & 2 & 3 & 4 & 5 & 6 & 7 & 8 & 9 & 10 & 11 & 12 & 13 \\
        \midrule
        \textbf{$\epsilon$} & $1.00$ & $0.955$ & $0.91$ & $0.865$ & $0.82$ & $0.775$ & $0.73$ & $0.685$ & $0.64$ & $0.595$ & $0.55$ & $0.505$ & $0.46$ \\
        \midrule
        \textbf{Iter.} & 14 & 15 & 16 & 17 & 18 & 19 & 20 & 21 & 22 & 23 & 24 & 25 & 26 \\
        \midrule
        \textbf{$\epsilon$} & $0.415$ & $0.37$ & $0.325$ & $0.28$ & $0.235$ & $0.19$ & $0.145$ & $0.10$ & $4.46e-2$ & $1.99e-2$ & $8.91e-3$ & $3.98e-3$ & $1.77e-3$ \\
        \midrule
        \textbf{Iter.} & 27 & 28 & 29 & 30 & 31 & 32 & 33 & 34 & 35 & 36 & 37 & 38 & 39 \\
        \midrule
        \textbf{$\epsilon$} & $7.94e-4$ & $3.54e-4$ & $1.58e-4$ & $7.07e-5$ & $3.16e-5$ & $1.41e-5$ & $6.30e-6$ & $2.81e-6$ & $1.25e-6$ & $5.62e-7$ & $2.51e-7$ & $1.12e-7$ & $5.01e-8$ \\
        \bottomrule
    \end{tabular}
\end{table}

Figure \ref{fig:pandemic_MPCC_app} shows additional optimal trajectories obtained using the MPCC approximation.

\begin{figure}[!htb]
     \centering
     \begin{subfigure}[b]{0.4\textwidth}
         \centering
         \includegraphics[width=1\textwidth]{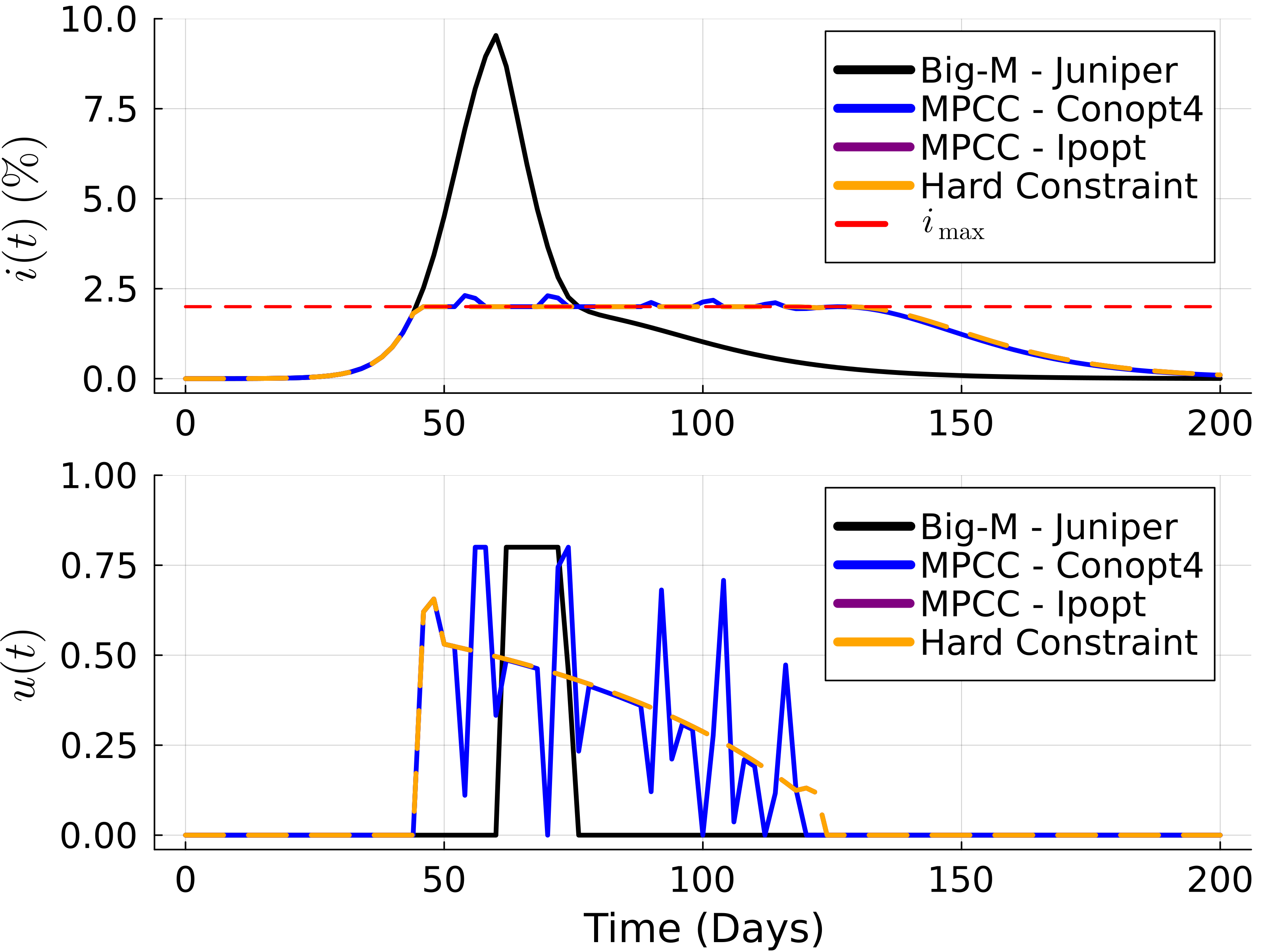}
         \caption{MPCC: $\alpha = 0.85$}
         \label{fig:pandemic_MPCC_0.85_app}
     \end{subfigure}
     \hspace{1cm}
     \begin{subfigure}[b]{0.4\textwidth}
         \centering
         \includegraphics[width=1\textwidth]{figures/Pandemic_Control_Case_Study/comp_plot_0.9.png}
         \caption{MPCC: $\alpha = 0.9$}
         \label{fig:pandemic_MPCC_0.9_app}
     \end{subfigure}
          \hspace{1cm}
     \begin{subfigure}[b]{0.4\textwidth}
         \centering
         \includegraphics[width=1\textwidth]{figures/Pandemic_Control_Case_Study/comp_plot_0.96.png}
         \caption{MPCC: $\alpha = 0.96$}
         \label{fig:pandemic_MPCC_0.96_app}
     \end{subfigure}
          \hspace{1cm}
     \begin{subfigure}[b]{0.4\textwidth}
         \centering
         \includegraphics[width=1\textwidth]{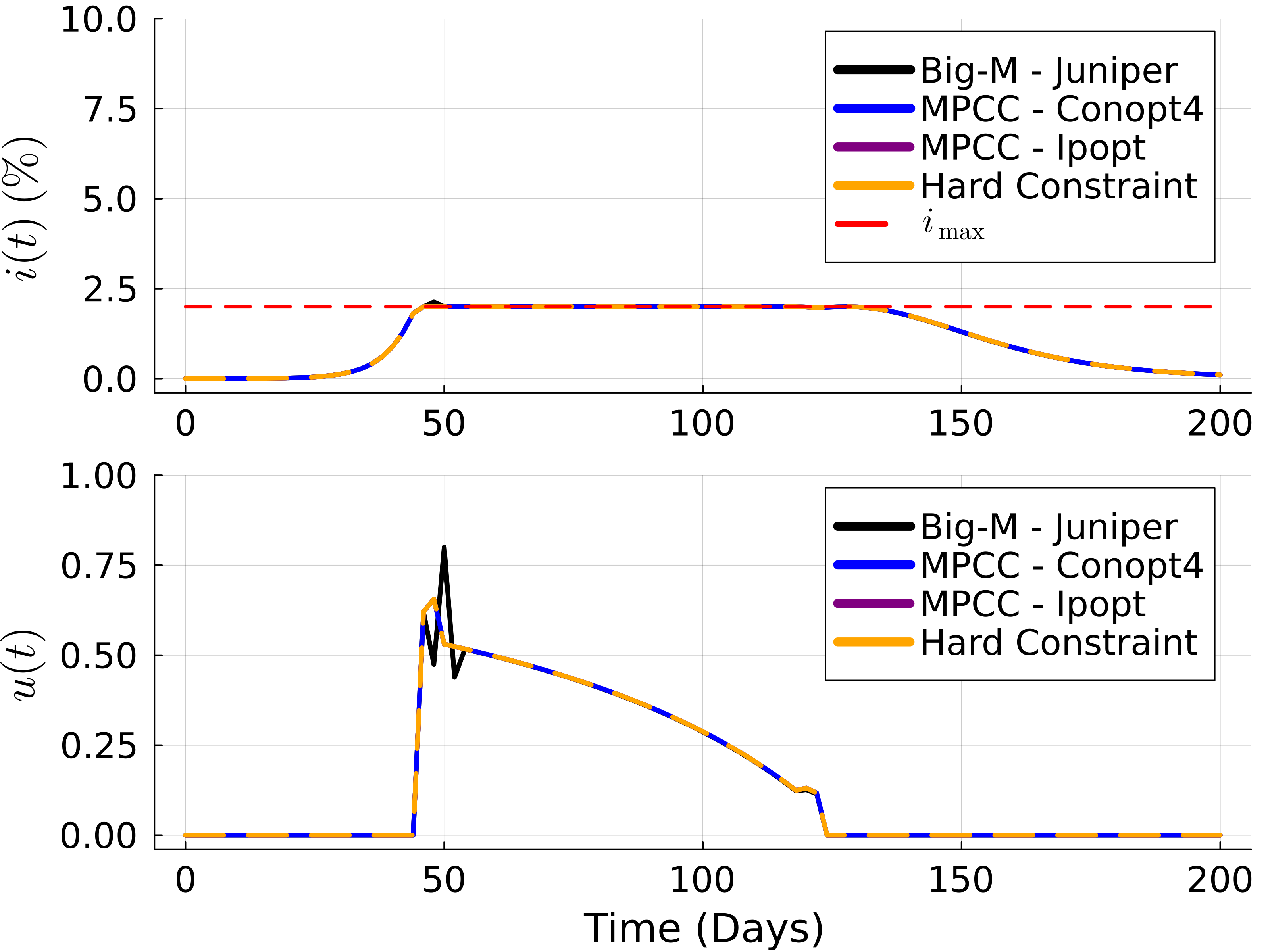}
         \caption{MPCC: $\alpha = 0.99$}
         \label{fig:pandemic_MPCC_0.99_app}
     \end{subfigure}
    \caption{Optimal $i(t)$ and $u(t)$ trajectories for a representative range of $\alpha$ values using MPCC approximation.}
    \label{fig:pandemic_MPCC_app}
\end{figure}

\FloatBarrier

Figure \ref{fig:Pandemic_SigVaR_iterations} compares the indicator approximation of the iterative SigVaR algorithm to the big-M solution. This plot shows qualitative evidence that as the modified sigmoidal parameters are iteratively tuned, the SigVaR approximation converges to the indicator of the big-M solution. Interestingly, as shown in Figure \ref{fig:pandemic_SigVaR}, perfectly approximating the indicator function of the MINLP formulation does not guarantee that the entire solution matches. While the methods may violate the constraint during the exact same time periods, this does not ensure that the local minimum aligns with the global minimum. This distinction is evident in the figure, where the profiles are similar but do not completely overlap.

\begin{figure}[!htb]
	\includegraphics[width=0.6\textwidth]{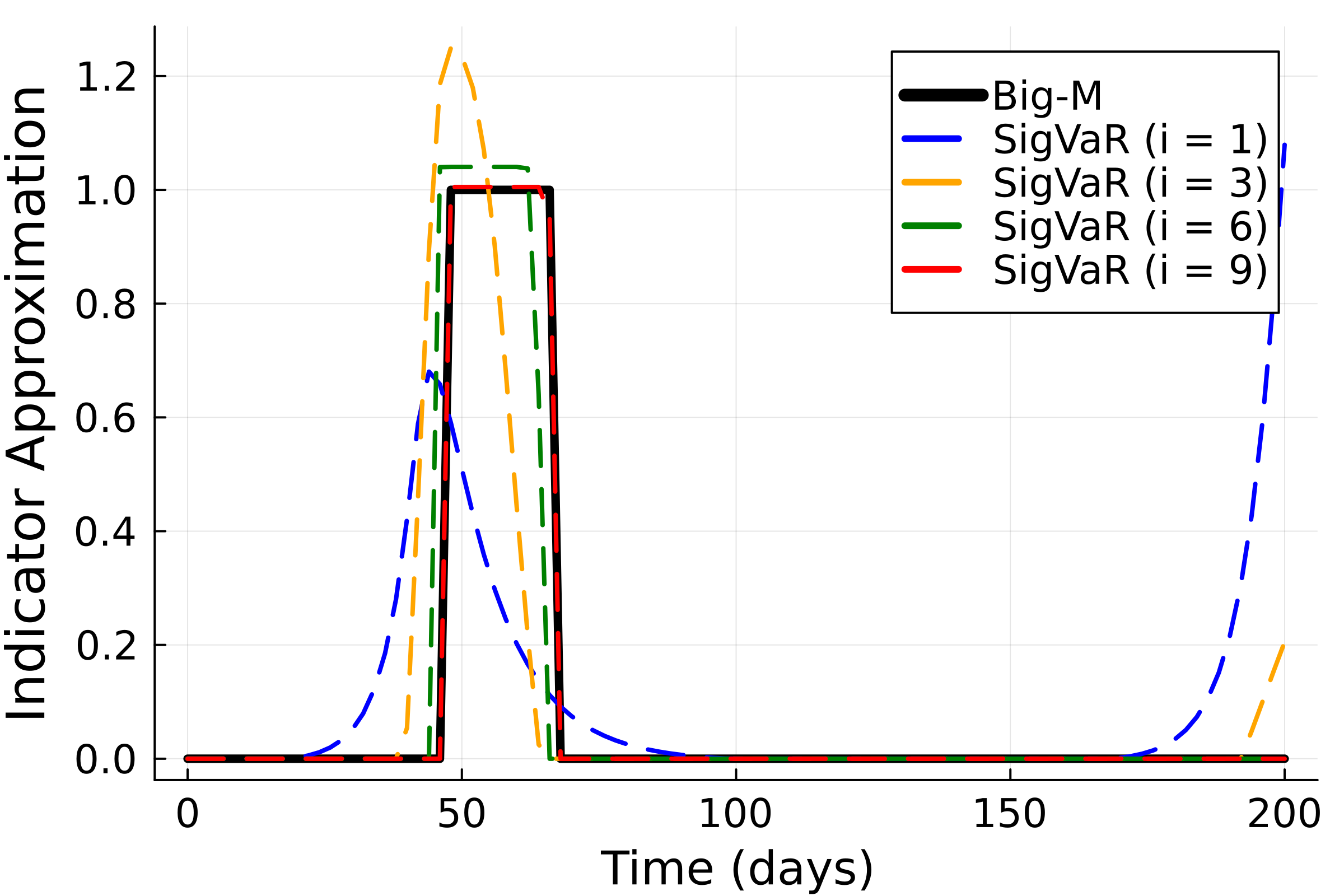}
	\centering
	\caption{The indicator function approximations obtained after each iteration of the SigVaR algorithm for $\alpha = 0.9$.}
	\label{fig:Pandemic_SigVaR_iterations}
\end{figure}

\FloatBarrier

\subsubsection{Integral Squared Error Analysis}\label{app:pandemic_error}

To quantitatively compare the results from the approximations, the integral squared error (ISE) of the indicator function and the $i(t)$ time profile are calculated as:
\begin{equation}
    \begin{aligned}
        &&&&& ISE_{\mathbbm{1}_{i(t)-i_{max} \leq 0}(t)} = \int_{t \in \mathcal{D}_t} (\mathbbm{1}_{i(t)-i_{max} \leq 0}(t)_{Approximation}-\mathbbm{1}_{i(t)-i_{max} \leq 0}(t)_{Big-M})^2 dt 
    \end{aligned}
    \label{eq:pandemic_ISE_indicator}
\end{equation}
\begin{equation}
    \begin{aligned}
        &&&&& ISE_{i(t)} = \int_{t \in \mathcal{D}_t} (i(t)_{Approximation}-i(t)_{Big-M})^2 dt 
    \end{aligned}
    \label{eq:pandemic_ISE_i(t)}
\end{equation}

\begin{table}[!htb]
    \centering
    \caption{$ISE_{i(t)}$ for each solution technique.}
    \begin{tabular}{cccccccc}
    \toprule
        $\alpha$ & \textbf{Hard Constraint} & \textbf{MPCC} & \textbf{CVaR} & \textbf{SigVaR} \\ \midrule
        $\mathbf{0.85}$ & $3.56$ & $3.48$ & $3.56$ & $0.076$ \\ \midrule
        $\mathbf{0.90}$ & $0.789$ & $0.765$ & $0.789$ & $0.0718$ \\ \midrule
        $\mathbf{0.95}$ & $0.035$ & $0.0364$ & $0.035$ & $0.0081$ \\ \midrule
        $\mathbf{0.96}$ & $0.0151$ & $0.0170$ & $0.0151$ & $0.00577$ \\ \midrule
        $\mathbf{0.97}$ & $0.0055$ & $0.0055$ & $0.0055$ & $0.00868$ \\ \midrule
        $\mathbf{0.99}$ & $0.00018$ & $0.00018$ & $0.00018$ & $0.00387$ \\ \bottomrule
    \end{tabular}
    \label{tab:pandemic_i(t)_ISE_app}
\end{table}

Table \ref{tab:pandemic_i(t)_ISE_app} offers additional quantitative validation of the observations in Figure \ref{fig:pandemic_CVaR} in Section \ref{sec:pandemic_case} by computing the $ISE_{i(t)}$ \eqref{eq:pandemic_ISE_i(t)} between the approximations and the big-M trajectories. The conservative nature of the CVaR solution is further substantiated, as the $ISE_{i(t)}$ values for the hard constraint and the CVaR solution remain nearly identical across all tested $\alpha$ values. 

\begin{table}[!htb]
    \centering
    \caption{ISE of the indicator approximation for each solution technique.}
    \begin{tabular}{ccccccc}
    \toprule
        $\alpha$ & \textbf{CVaR} & \textbf{SigVaR} \\ \midrule
        \textbf{0.85} & $0.16$ & $7e-5$ \\ \midrule
        \textbf{0.90} & $0.11$ & $0.0401$ \\ \midrule
        \textbf{0.95} & $0.06$ & $0.0200$  \\ \midrule
        \textbf{0.96} & $0.04$ & $3e-5$  \\ \midrule
        \textbf{0.97} & $0.03$ & $0.0715$  \\ \midrule
        \textbf{0.99} & $0.37$ & $0.0307$  \\ \bottomrule
    \end{tabular}
    \label{tab:pandemic_indicator(t)_ISE_app}
\end{table}

Similarly, Table \ref{tab:pandemic_indicator(t)_ISE_app} evaluates the error in approximating the indicator function relative to the big-M solution using Equation \eqref{eq:pandemic_ISE_indicator}. Since MPCC produces continuous variables that adhere to 0-1 values (to numerical tolerance), it eliminates the need to compute an approximation of the indicator function. The advantages of the SigVaR approximation are most pronounced for $\alpha$ values below 0.96. For higher $\alpha$ values, the ISE of SigVaR's indicator function converges to a magnitude similar to that of CVaR's indicator function, reflecting comparable performance under these conditions.

% \begin{table}[!htb]
%     \centering
%     \caption{True $\alpha$ for each solution technique.}
%     \begin{tabular}{cccccccc}
%     \toprule
%         $\alpha$ & \textbf{Hard Constraint} & \textbf{CVaR} & \textbf{MPCC} & \textbf{SigVaR} & \textbf{BigM} \\ \midrule
%         $\mathbf{0.85}$ & $1.0$ & $1.0$ & $0.91$ & $0.86$ & $0.85$ \\ \midrule
%         $\mathbf{0.90}$ & $1.0$ & $1.0$ & $0.91$ & $0.91$ & $0.90$ \\ \midrule
%         $\mathbf{0.95}$ & $1.0$ & $1.0$ & $0.96$ & $0.96$ & $0.95$ \\ \midrule
%         $\mathbf{0.96}$ & $1.0$ & $1.0$ & $0.97$ & $0.97$ & $0.96$ \\ \midrule
%         $\mathbf{0.97}$ & $1.0$ & $1.0$ & $0.98$ & $1.0$ & $0.97$ \\ \midrule
%         $\mathbf{0.99}$ & $1.0$ & $1.0$ & $1.0$  & $1.0$ & $0.99$ \\ \bottomrule
%     \end{tabular}
%     \label{tab:pandemic_real_alphas_app}
% \end{table}

\begin{table}[!htb]
    \centering
    \caption{Performance of SigVaR on the disease control case study with $\alpha = 0.9$.}
    \begin{tabular}{ccccccc}
    \toprule
        \textbf{Iteration, $\mathbf{i}$} & $\mathbf{\beta}$ & $\mathbf{\gamma}$ & $\alpha_{\text{true}}$ & $\mathbf{ISE_{\mathbbm{1}_{i(t)-i_{max \leq 0}}(t)}}$ & $\mathbf{ISE_{i(t)}}$ & \textbf{Solve Time (s)} \\ \midrule
        CVaR & - & -      & $1.0$  & $0.11$ & $0.789$ & $0.9586$ \\ \midrule
        1 & $1.6$ & $63.8$    & $0.995$  & $0.455$ & $2.687$ & $6.67$ \\\midrule
        2 & $3.1$ & $102.5$   & $0.98$  & $0.145$ & $1.922$ & $1.84$ \\\midrule
        3 & $6.2$ & $180$     & $0.95$  & $0.0850$ & $1.275$ & $1.24$ \\\midrule
        4 & $12.4$ & $335.1$  & $0.93$  & $0.0600$ & $0.937$ & $1.96$ \\\midrule
        5 & $24.8$ & $645.1$  & $0.92$  & $0.0400$ & $0.614$ & $5.15$ \\\midrule
        6 & $49.6$ & $1265.2$ & $0.91$  & $0.0400$ & $0.299$ & $1.62$ \\\midrule
        7 & $99.2$ & $2505.5$ & $0.91$  & $0.0200$ & $0.216$ & $2.08$ \\\midrule
        8 & $198.4$ & $4986$  & $0.91$  & $0.0200$ & $0.184$ & $1.5$ \\\midrule
        9 & $396.9$ & $9946.9$ & $0.91$  & $0.0200$ & $0.0718$ & $4.86$ \\\bottomrule
    \end{tabular}
    \label{tab:pandemic_SigVaR_iteration_table}
\end{table}

Table \ref{tab:pandemic_SigVaR_iteration_table}, along with Figure \ref{fig:Pandemic_SigVaR_iterations}, presents the iterative performance of the SigVaR approximation over ten iterations for $\alpha = 0.90$. This table provides critical quantitative insights that reinforce the qualitative observations from Figures \ref{fig:pandemic_CVaR} and \ref{fig:pandemic_SigVaR} in Section \ref{sec:pandemic_case}. The progressive improvement in precision is quantitatively illustrated through the ISE of the indicator function, $\mathbbm{1}_{h(t) \leq 0}(t)$, and the infectious population, $i(t)$. The initial selections of parameters $\mu$ and $\tau$ establish a notably conservative starting point for the SigVaR solution. Notably, the ISE of $i(t)$ for the SigVaR solution does not surpass the accuracy of the CVaR solution until the 3\textsuperscript{rd} iteration, as shown in Figure \ref{fig:ISE_i_0.9_app}. Over nine iterations, the ISE of SigVaR's indicator approximation achieves a 99.06\% reduction, while the ISE of the infectious population improves by 94.88\% over nine iterations. Evidently, SigVaR demonstrates a robust capability to approximate constraint violations across the entire time domain, with iterative refinements evident after each iteration. 

While the SigVaR algorithm effectively approximates the indicator function of the event constraint, it simultaneously reconstructs the trajectory and behavior of $i(t)$, achieving a close approximation to the big-M $i(t)$ solution.

\begin{figure}[!htb]
	\includegraphics[width=0.6\textwidth]{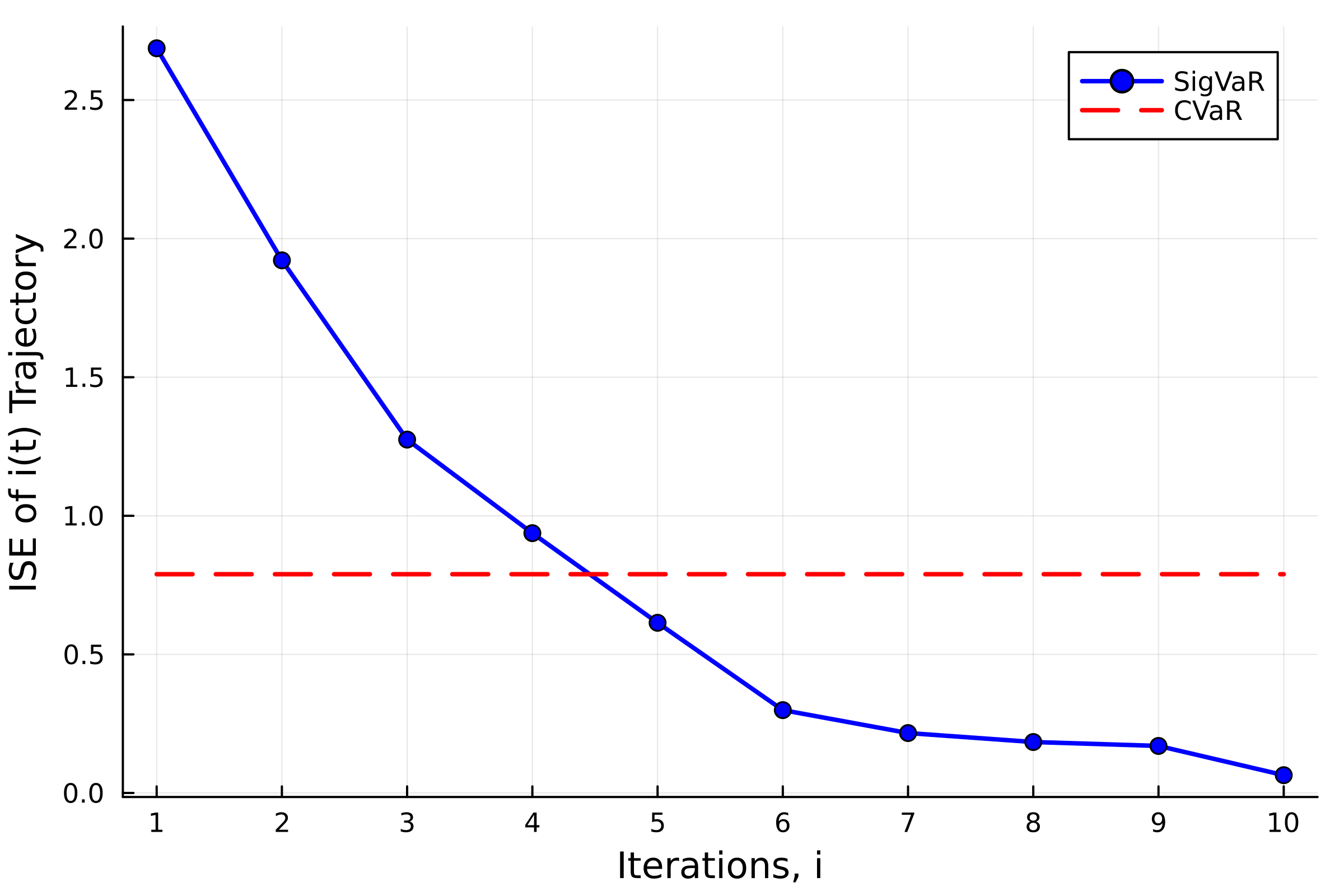}
	\centering
	\caption{Comparison of ISE of i(t) between CVaR and SigVaR, $\alpha = 0.9$}
	\label{fig:ISE_i_0.9_app}
\end{figure}

\FloatBarrier

\subsection{2D Temperature Control of Heated Plate Supplementary Material }\label{app:diffusion}

Several tables in this section support the overly conservative nature the CVaR solution method while also numerically reinforcing the potential of the SigVaR solution method. Firstly, Table \ref{tab:diffusion_violations_app_NL} highlights the number of violations for each method in all $\alpha$ values tested. Evidently, the SigVaR approximation yields the only non-zero number of violations, which align relatively well to the big-M's violation count. Using Equations \eqref{eq:diffusion_ISE_i(t)} and \eqref{eq:diffusion_ISE_indicator}, the numerical values summarized in Tables \ref{tab:diffusion_NL_indicator(x)_ISE_app} and \ref{tab:diffusion_NL_T(x)_ISE_app} further strengthen the claims made in Section \ref{sec:pde_case} about each solution method. Similar to Table \ref{tab:pandemic_SigVaR_iteration_table}, Table \ref{tab:diffusion_NL_SigVaR_iteration_table} shows various metrics for each iteration of the SigVaR solution computed for $\alpha = 0.95$. This table concretely quantifies how the SigVaR solution converges closer to the big-M solution as the sigmoidal parameters, $\beta$ and $\gamma$, increase iteratively.

\begin{equation}
    \begin{aligned}
        &&&&& ISE_{\mathbbm{1}_{T(x)-T_{max} \leq 0}(x)} = \int_{x \in \mathcal{D}_x} (\mathbbm{1}_{T(x)-T_{max} \leq 0}(x)_{Approximation}-\mathbbm{1}_{T(x)-T_{max} \leq 0}(x)_{Big-M})^2 dx 
    \end{aligned}
    \label{eq:diffusion_ISE_indicator}
\end{equation}
\begin{equation}
    \begin{aligned}
        &&&&& ISE_{T(x)} = \int_{x \in \mathcal{D}_x} (T(x)_{Approximation}-T(x)_{Big-M})^2 dx
    \end{aligned}
    \label{eq:diffusion_ISE_i(t)}
\end{equation}

\begin{table}[!htb]
    \centering
    \caption{True $\alpha$ for each solution technique.}
    \begin{tabular}{cccccccc}
    \toprule
        $\alpha$ & \textbf{Hard Constraint} & \textbf{CVaR} & \textbf{MPCC} & \textbf{SigVaR} & \textbf{BigM} \\ \midrule
        $\mathbf{0.90}$ & $1.0$ & $1.0$ & $1.0$ & $0.91$ & $0.90$ \\ \midrule
        $\mathbf{0.95}$ & $1.0$ & $1.0$ & $1.0$ & $0.96$ & $0.95$ \\ \midrule
        $\mathbf{0.96}$ & $1.0$ & $1.0$ & $1.0$ & $0.97$ & $0.96$ \\ \midrule
        $\mathbf{0.97}$ & $1.0$ & $1.0$ & $1.0$ & $1.0$ & $0.97$ \\ \midrule
        $\mathbf{0.99}$ & $1.0$ & $1.0$ & $1.0$  & $1.0$ & $0.99$ \\ \bottomrule
        $\mathbf{0.999}$ & $1.0$ & $1.0$ & $1.0$  & $1.0$ & $0.99$ \\ \bottomrule
    \end{tabular}
    \label{tab:diffusion_real_alphas_app}
\end{table}

\begin{table}[h]
    \centering
    \caption{$ISE_{T(x)}$ for each solution technique.}
    \begin{tabular}{ccccccc}
    \toprule
        $\alpha$ & \textbf{Hard Constraint} & \textbf{MPCC} & \textbf{CVaR} & \textbf{SigVaR} \\ \midrule
        \textbf{0.90} & $0.0385$ & $0.0385$ & $0.0385$ & 
        $0.0115$\\ \midrule
        \textbf{0.95} & $0.0326$ & $0.0326$ & $0.0326$ & $0.00265$ \\ \midrule
        \textbf{0.96} & $0.0253$ & $0.0253$ & $0.0253$ & $0.00249$ \\ \midrule
        \textbf{0.97} & $0.0232$ & $0.0232$ & $0.0232$ & $0.00173$ \\ \midrule
        \textbf{0.99} & $0.0165$ & $0.0165$ & $0.0165$ & $0.00172$ \\ \midrule
        \textbf{0.999} & $0.00129$ & $0.00129$ & $0.00129$ & $0.00743$ \\ \bottomrule
    \end{tabular}
    \label{tab:diffusion_NL_T(x)_ISE_app}
\end{table}

\begin{table}[h]
    \centering
    \caption{ISE of the indicator approximation for each solution technique.}
    \begin{tabular}{ccc}
    \toprule
        $\alpha$ & \textbf{CVaR} & \textbf{SigVaR} \\ \midrule
        \textbf{0.90} & $0.298$ & $0.334$ \\ \midrule
        \textbf{0.95} & $0.142$ & $0.0709$ \\ \midrule
        \textbf{0.96} & $0.0957$ & $0.0763$ \\ \midrule
        \textbf{0.97} & $0.0752$ & $0.0355$ \\ \midrule
        \textbf{0.99} & $0.0387$ & $0.0150$ \\ \midrule
        \textbf{0.999} & $0.00322$ & $0.00645$ \\ \bottomrule
    \end{tabular}
    \label{tab:diffusion_NL_indicator(x)_ISE_app}
\end{table}

\begin{table}[h]
    \centering
    \caption{Performance of the SigVaR approximation for the 2D diffusion case study with $\alpha = 0.95$.}
    \begin{tabular}{cccccc}
    \toprule
        \textbf{Iteration, $\mathbf{i}$} & $\mathbf{\beta}$ & $\mathbf{\gamma}$ & $\mathbf{ISE_{\mathbbm{1}_{T(x)-T_{max \leq 0}}(x)}}$ & $\mathbf{ISE_{T(x)}}$ & \textbf{Solve Time (s)} \\ \midrule
        CVaR & - & - & $0.142$ & $0.0253$ & $2.02$ \\\midrule 
        1 & $15.5$ & $7.5$ & $0.1333$ & $0.168$ & $3.8$  \\\midrule 
        2 & $18.6$ & $8.9$ & $0.1247$ & $0.1249$ & $5.07$  \\\midrule 
        3 & $22.3$ & $10.6$ & $0.1247$ & $0.0918$ & $3.96$ \\\midrule 
        4 & $26.8$ & $12.6$ & $0.1204$ & $0.0669$ & $4.62$ \\\midrule 
        5 & $32.1$ & $15.1$ & $0.1204$ & $0.0496$ & $5.01$  \\\midrule 
        6 & $38.6$ & $18.0$ & $0.1204$ & $0.0352$ & $6.67$  \\\midrule 
        7 & $46.3$ & $21.5$ & $0.1053$ & $0.0255$ & $6.97$  \\\midrule 
        8 & $55.5$ & $25.7$ & $0.1032$ & $0.0183$ & $6.89$ \\\midrule 
        9 & $66.7$ & $30.8$ & $0.0989$ & $0.0131$ & $6.19$  \\\midrule 
        10 & $80.0$ & $36.8$ & $0.0881$ & $0.0093$ & $9.39$  \\\midrule 
        11 & $96.0$ & $44.1$ & $0.0838$ & $0.0066$ & $9.91$  \\\midrule 
        12 & $115.2$ & $52.8$ & $0.0838$ & $0.0046$ & $10.11$  \\\midrule 
        13 & $138.2$ & $63.3$ & $0.0795$ & $0.0033$ & $11.53$  \\\midrule 
        14 & $165.9$ & $75.8$ & $0.0709$ & $0.0026$ & $14.89$ \\ \bottomrule
    \end{tabular}
    \label{tab:diffusion_NL_SigVaR_iteration_table}
\end{table}

% \begin{table}
%     \centering
%     \caption{Number of Violations for Each Solution Technique - Linear}
% \label{tab:diffusion_violations_app_linear}
% \begin{tabular}{cccccc}
% \toprule
%     $\alpha$ & \textbf{Hard Constraint} & \textbf{Big-M} & \textbf{MPCC} & \textbf{CVaR} & \textbf{SigVaR}  \\ \midrule
%     $0.90$    & 0 & 163 & 0 & 0 & 152 \\ \midrule
%     $0.95$    & 0 & 174 & 0 & 0 & 96  \\ \midrule
%     $0.96$    & 0 & 147 & 0 & 0 & 76  \\ \midrule
%     $0.97$    & 0 & 111 & 0 & 0 & 54  \\ \midrule
%     $0.99$    & 0 & \todo{37} & 0 & 0 & \todo{28}   \\ \midrule
%     $0.999$   & 0 & \todo{3} & 0 & 0 & \todo{3}     \\ \midrule
%     $0.9999$  & 0 & 0 & 0 & 0 & 0     \\ \bottomrule      
% \end{tabular}
% \end{table}

\begin{table}
    \centering
    \caption{Number of violations for each solution technique }
\label{tab:diffusion_violations_app_NL}
\begin{tabular}{cccccc}
\toprule
    $\alpha$ & \textbf{Hard Constraint} & \textbf{Big-M} & \textbf{MPCC} & \textbf{CVaR} & \textbf{SigVaR}  \\ \midrule
    $\mathbf{0.90}$    & 0 & 277 & 0 & 0 & 144 \\ \midrule
    $\mathbf{0.95}$    & 0 & 132 & 0 & 0 & 84  \\ \midrule
    $\mathbf{0.96}$    & 0 & 89 & 0 & 0 & 76  \\ \midrule
    $\mathbf{0.97}$    & 0 & 70 & 0 & 0 & 59  \\ \midrule
    $\mathbf{0.99}$    & 0 & 48 & 0 & 0 & 28   \\  \midrule
    $\mathbf{0.999}$   & 0 & 3 & 0 & 0 & 3 \\ \bottomrule
\end{tabular}
\end{table}
\end{document}